\definecolor{vegasgold}{rgb}{0.77, 0.7, 0.35}
\definecolor{darkgoldenrod}{rgb}{0.72, 0.53, 0.04}
\definecolor{gold(metallic)}{rgb}{0.83, 0.69, 0.22}
\newtheorem{lthm}{Theorem}
\DeclareFontFamily{U}{wncy}{}
\DeclareFontShape{U}{wncy}{m}{n}{<->wncyr10}{}
\DeclareSymbolFont{mcy}{U}{wncy}{m}{n}
\DeclareMathSymbol{\Sh}{\mathord}{mcy}{"58}
\newtheorem{theorem}{Theorem}[section]
\newtheorem{lemma}[theorem]{Lemma}
\newtheorem{notn}[theorem]{Notation}
\newtheorem*{theorem*}{Theorem}
\newtheorem*{ass*}{Assumption}
\newtheorem{definition}[theorem]{Definition}
\newtheorem{corollary}[theorem]{Corollary}
\newtheorem{remark}[theorem]{Remark}
\newtheorem{proposition}[theorem]{Proposition}
\newcommand{\ord}{\mathrm{ord}}
\newcommand{\cN}{\mathcal{N}}
\newcommand{\Z}{\mathbb{Z}}
\newcommand{\cW}{\mathcal{W}}
\newcommand{\Q}{\mathbb{Q}}
\newcommand{\cG}{\mathcal{G}}
\newcommand{\F}{\mathbb{F}}
\newcommand{\cR}{\mathcal{R}}
\newcommand{\cS}{\mathcal{S}}
\newcommand{\V}{\mathcal{V}}
\newcommand{\op}[1]{\operatorname{#1}}
\numberwithin{equation}{section}
\begin{document}

\title[Counting rational maps]{Counting rational maps on $\mathbb{P}^1$ with prescribed local conditions}

\author[K.~D.~Nguyen]{Khoa D.~Nguyen}
\address[Nguyen]{Department of Mathematics and Statistics, University of Calgary,
2500 University Drive NW, Calgary, T2N 1N4, Alberta, Canada}
\email{dangkhoa.nguyen@ucalgary.ca}

\author[A.~Ray]{Anwesh Ray}
\address[Ray]{Chennai Mathematical Institute, H1, SIPCOT IT Park, Kelambakkam, Siruseri, Tamil Nadu 603103, India}
\email{anwesh@cmi.ac.in}

\keywords{rational map, minimal resultant, height density, box density}
\subjclass[2020]{Primary: 11D45, 37P05. Secondary: 11B05, 11T06}

\begin{abstract}
We explore distribution questions for rational maps on the projective line $\mathbb{P}^1$ over $\mathbb{Q}$ within the framework of arithmetic dynamics, drawing analogies to elliptic curves. Specifically, we investigate counting problems for rational maps $\phi$ of fixed degree $d \geq 2$ with prescribed reduction properties. One of our main results establishes that with respect to the weak box densities in an earlier work of Poonen, a positive proportion of rational maps consist of those having globally minimal resultant. Additionally, for degree 2 rational maps, we perform explicit computations demonstrating that over $32.7\%$ possess a squarefree, and hence minimal, resultant.
\end{abstract}

\maketitle

\section{Introduction}

\subsection{Background and motivation} 
\par An important class of algebraic  dynamical systems arises from rational functions on the projective line $\mathbb{P}^1$ with coefficients in the field of rational numbers $\Q$. These systems are studied in the burgeoning field of arithmetic dynamics, which seeks to understand the arithmetic properties of the iterates of such maps. Rational functions $\phi$ in this context are viewed as global objects, and their arithmetic dynamics share many analogous properties to the diophantine geometry of elliptic curves defined over $\Q$.  
A point $\alpha$ is called \emph{preperiodic} if its forward orbit is finite, and a \emph{wandering point} otherwise. These are analogous to torsion points and points of infinite order on elliptic curves. 

For an elliptic curve defined over a number field, one can define the local minimal discriminant at each non-archimedean place and define the (global) minimal discriminant as the product of the local minimal discriminants \cite[Chapters~VII--VIII]{silvermanEC}. Similarly, for a rational map defined over a number field, one can define the local minimal resultant at each non-archimedean place and define the (global) minimal resultant as their product, see \cite[Chapter~4.11]{silvermantextbook}. After homogenization, a rational map $\phi\in\Q(z)$  is given by a pair of homogeneous 
polynomials $F(X,Y),G(X,Y)\in \Z[X,Y]$ without common factors in $\Z[X,Y]$. We say that $\phi$ is globally minimal or has globally minimal resultant if the resultant of $F$ and $G$ is equal to the minimal resultant of $\phi$. Over a complete algebraically closed non-archimedean valued field, the notion of (local) minimal resultants give rise to the so called minimal resultant locus which has interesting analytic and moduli-theoretic characterizations \cite{Rumely2015,Rumely2017}.  Over $\Q$, globally minimal maps play an important role in a conjecture of Silverman \cite[Conjecture 3.47]{silvermantextbook} on the uniformity of number of integral points in orbits. This is, in turn, analogous to a conjecture of Lang \cite{Lang} for integral points on elliptic curves after work of Dem'janenko.

In recent years, there have been various results concerning  statistical aspects of the arithmetic dynamics on $\mathbb{P}^1$, for example \cite{HH2019,H2021a,H2021b,DHHJ2024}. However, the nature of these results involves a stochastic process in which one successively composes a random rational map from a finite set endowed with a probability measure. On the other hand, our work continues the classical analogy program between arithmetic dynamics on $\mathbb{P}^1$ and elliptic curves. 
In arithmetic statistics of elliptic curves, considerable interest lies in questions regarding the distribution of sets $\cS$ with certain properties. For instance, the rank distribution conjecture of Katz and Sarnak predicts that exactly $1/2$ (resp. $1/2$) of the elliptic curves over $\Q$ have rank $0$ (resp. $1$). Cremona and Sadek \cite{cremonasadek} consider the density of global Weierstrass models of elliptic curves over $\Q$ with prescribed reduction types at the primes. We prove analogues of results of Cremona and Sadek for models for dynamical systems, arising from rational maps of prescribed degree $d\geq 2$ in $\mathbb{P}^1$ with coefficients in $\Q$.

\subsection{Main results}
\par Heights of polynomials are standard in diophantine geometry and we use them to formulate counting problems related to arithmetic dynamics. Fix $d\geq 2$, let $\cR^{(d)}$ denote the
set of univariate rational maps of degree $d$ with rational coefficients.
We count $\phi(z)\in\cR^{(d)}$ with certain prescribed reduction types. By homogenization, we write $\phi(X/Y) = \frac{F(X, Y)}{G(X, Y)}$ where $F$ and $G$ are coprime homogeneous polynomials given by:
\[ 
F(X, Y) = \sum_{i=0}^d A_i X^i Y^{d-i} \quad \text{and} \quad G(X, Y) = \sum_{j=0}^d B_j X^j Y^{d-j}, 
\]
with $A_i, B_j \in \Q$. To the map $\phi$, one associates the point
\[ 
P(\phi) := [A_0 : \cdots : A_d : B_0 : \cdots : B_d] \in \mathbb{P}^{2d+1}(\Q). 
\]
The height of $\phi$, denoted $H(\phi)$, is defined to be the absolute multiplicative Weil height 
of the point $P(\phi)$ \cite[Chapter~1]{BomGub}.   
For any $x > 0$, let $\cR^{(d)}(x)=\{\phi\in\cR^{(d)}:\ H(\phi)\leq x\}$. It is seen that  \[\# \cR^{(d)}(x)= \frac{(2x)^{2d+2}}{2\zeta(2d+2)}+O\left( x^{2d+1}\log x\right),\] cf. Proposition \ref{count for Rd}. Given a set of rational maps $\cS\subseteq \cR^{(d)}$, we define its height density to be
\[\lim_{x\rightarrow\infty} \left(\frac{\#\{\phi\in \cR^{(d)}(x)\mid \phi\in \cS\}}{\# \cR^{(d)}(x)}\right),\]
provided that the limit exists. We say that with respect to the height density more than $c \%$ or rational maps satisfy $\cS$ if 
\[\liminf_{x\rightarrow\infty} \left(\frac{\#\{\phi\in \cR^{(d)}(x)\mid \phi\in \cS\}}{\# \cR^{(d)}(x)}\right)> \frac{c}{100}.\]

Inspired by Poonen's work \cite{Poonensquarefree}, there is a more general density to consider. First, it is possible to regard $\cR^{(d)}\subset \mathbb{P}^1(\Q)$ as subsets of $\Z^{2d+2}$, see Notation~\ref{notation}. Then one may consider the \emph{box density}  in 
\cite[p.~355]{Poonensquarefree}. There are weaker versions of this called the \emph{weak box densities}, cf. Theorem 3.2 of \emph{loc. cit}. More specifically, for $\mathcal{S}\subset \cR^{(d)}$ and for each
natural number $n\in [1,2d+2]$, we have the notion of the weak box density
$\mu_n(\mathcal{S})$, see Definition~\ref{def:weak box densities}.

In this paper, we obtain results for the less general densities, namely the height density and the weak box densities. Our results for the height density hold for \emph{odd} values of $d$ while results for the weak box densities hold for every $d\geq 2$:

\begin{lthm}[Remark~\ref{rem:V and W have globally minimal maps} and Theorem~\ref{main thm of section 4 Wsigma}]\label{thm a1}
    Let $d\geq 3$ be an odd integer and let $\Sigma$ be a finite set of prime numbers. Let $\cW_\Sigma$ be the set of rational maps $\phi(z)\in\Q(z)$ of degree $d$ with $2d$-power free resultant and good reduction at all primes $p\in \Sigma$. The following assertions hold:
    \begin{enumerate}
        \item each rational map $\phi\in \cW_\Sigma$ has globally minimal resultant;
        \item $\cW_\Sigma$ has positive height density.
    \end{enumerate}
    Consequently, with respect to the height density, a positive proportion of rational maps have good reduction at all primes $p\in \Sigma$ and globally minimal resultant.
\end{lthm}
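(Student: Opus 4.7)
The plan for (1) is to apply the transformation formula for resultants under $\operatorname{PGL}_2$-conjugation, combined with the oddness of $d$. For $\gamma \in \operatorname{GL}_2(\overline{\Q})$, writing $(F^\gamma, G^\gamma)$ for the natural homogeneous representative of $\phi^\gamma$ before content normalization, one has
\[
\operatorname{Res}(F^\gamma, G^\gamma) = (\det \gamma)^{d(d-1)} \operatorname{Res}(F, G).
\]
Normalizing to coprime integer coefficients divides out the $p$-adic content $p^c$, scaling the resultant by $p^{-2dc}$. Setting $k = v_p(\det \gamma)$, the change in the $p$-adic valuation of the normalized resultant equals $d(d-1)k - 2dc = 2d\bigl(\tfrac{d-1}{2}k - c\bigr)$, which is an integer multiple of $2d$ because $d$ is odd (so $d - 1$ is even). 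Thus any strict decrease in $v_p(\operatorname{Res})$ under $\operatorname{PGL}_2(\Q_p)$-conjugation is by at least $2d$; the hypothesis $v_p(\operatorname{Res}(F, G)) < 2d$ at every prime therefore forces $p$-adic minimality everywhere, so $\phi$ has globally minimal resultant.

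For (2), identify $\cW_\Sigma \subset \cR^{(d)}$ via the parametrization $\phi \leftrightarrow \mathbf{a} = (A_0, \ldots, B_d) \in \Z^{2d+2}$, where primitive tuples correspond to points in $\mathbb{P}^{2d+1}(\Q)$. Letting $R(\mathbf{a}) := \operatorname{Res}(F_{\mathbf{a}}, G_{\mathbf{a}})$, a polynomial of total degree $2d$ in $2d + 2$ variables, $\cW_\Sigma$ corresponds to primitive $\mathbf{a}$ satisfying $R(\mathbf{a}) \neq 0$, $R(\mathbf{a})$ is $2d$-power free, and $p \nmid R(\mathbf{a})$ for each $p \in \Sigma$. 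Combined with the asymptotic $\#\cR^{(d)}(x) \sim \tfrac{(2x)^{2d+2}}{2\zeta(2d+2)}$ of Proposition~\ref{count for Rd}, the height density of $\cW_\Sigma$ reduces to a Poonen-style $k$-free-values sieve on $R(\mathbf{a})$, augmented by the good-reduction conditions at primes in $\Sigma$. I would execute the sieve in two parts: for small primes $p \leq M$, compute local densities exactly via the Chinese remainder theorem and Möbius inversion; for large primes $p > M$, use a uniform tail bound, controlled by the observation that the density of $\mathbf{a}$ with $p^{2d} \mid R(\mathbf{a})$ is $O(p^{-2d})$, so $\sum_{p > M} p^{-2d} \to 0$ as $M \to \infty$.

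The main obstacle, and key calculation, is verifying that the resulting Euler product of local densities is strictly positive. It suffices to show that at every prime $p$, the local density $\rho_p$ of $\mathbf{a} \in \Z_p^{2d+2}$ satisfying the $p$-local condition (namely $v_p(R(\mathbf{a})) < 2d$, strengthened to $p \nmid R(\mathbf{a})$ when $p \in \Sigma$) is positive. This is immediate: the tuple $\mathbf{a}_0$ corresponding to $(F, G) = (X^d, Y^d)$ satisfies $R(\mathbf{a}_0) = \pm 1$, whence $v_p(R(\mathbf{a}_0)) = 0$ at every prime; by $p$-adic continuity of $R$, an entire open $p$-adic neighborhood of $\mathbf{a}_0$ witnesses the condition, so $\rho_p > 0$. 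The product of these positive local densities converges (via the tail bound above) to a positive real number, which combined with the $1/\zeta(2d+2)$ factor from the primitivity condition yields positive height density for $\cW_\Sigma$. Together with (1), this proves that with respect to the height density a positive proportion of rational maps of odd degree $d$ have good reduction at every prime in $\Sigma$ and globally minimal resultant.
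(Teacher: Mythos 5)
Your proof of part (1) is correct and captures the substance of what the paper cites from \cite{BM2012} (Lemma~\ref{d-1 lemma}(2)): the change in $\op{ord}_p(\op{Res})$ under $\op{PGL}_2(\Q)$-conjugation and content normalization lies in $d(d\pm1)\Z + 2d\Z = 2d\Z$ when $d$ is odd, so a valuation below $2d$ is already minimal. Your positivity argument for the local densities via the witness $(F,G)=(X^d,Y^d)$, which has $\op{Res}=\pm 1$, is also exactly the paper's Lemma~\ref{2d p to the 2d+1}(4).

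However, there is a genuine gap in part (2), and it is precisely where the real work lies. You reduce to a $2d$-power-free-values sieve and then write that the tail is ``controlled by the observation that the density of $\mathbf{a}$ with $p^{2d}\mid R(\mathbf{a})$ is $O(p^{-2d})$, so $\sum_{p>M}p^{-2d}\to 0$.'' Two problems. First, the exponent is not justified: because $\mathfrak{r}$ has a singular locus of codimension~$2$ in $\mathbb{A}^{2d+2}$, any point reducing into the singular locus mod~$p$ automatically has $p^2\mid\mathfrak{r}$, so the local density of $\{p^{2d}\mid\mathfrak{r}\}$ is only known to be $O(p^{-2})$ (this is Proposition~\ref{prop:gamma2}), and $O(p^{-2d})$ would require a separate argument about the depth of the singularities which you have not given. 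Second, and more fundamentally, summability of local densities does not by itself yield the needed uniform tail estimate. The local density $\rho_p$ controls $\#\{|\mathbf{a}|\le x: p^{2d}\mid R(\mathbf{a})\}$ only for $p$ small relative to $x$; for primes $p$ comparable to a power of $x$, the error term in the ``$\rho_p\cdot(2x)^{2d+2}$'' approximation can swamp the main term, and summing over infinitely many such primes is exactly the obstruction a Poonen-style sieve must overcome. The paper's Proposition~\ref{delta admissible proposition} does this by splitting off the set where $p\mid\mathfrak{s}$ for a chosen partial derivative $\mathfrak{s}$ of $\mathfrak{r}$ and invoking Poonen's \cite[Lemma~5.1]{Poonensquarefree} there, and then handling the complementary set with a direct one-variable Hensel-lifting count that exploits $p\nmid h_Q'(A_0)$ to show only $O(d)$ residue classes mod $p^{2d}$ occur, together with the size constraint $p^{2d}\le |\mathfrak{r}(\mathbf{a})|\ll x^{2d}$. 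Without an argument of this type, ``use a uniform tail bound'' is an assertion, not a proof, and the claimed positive height density is not established.
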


\begin{lthm}[Remark~\ref{rem:V and W have globally minimal maps} and Theorem~\ref{main thm of section 4}]\label{thm a2}
    Let $d\geq 2$ and $n\in [1, 2d+2]$ be integers and let $\Sigma$ be a finite set of prime numbers. Let $\mathcal{V}_\Sigma$ be the set of rational maps $\phi(z)\in\Q(z)$ of degree $d$ with $d$-power free resultant and good reduction at all primes $p\in \Sigma$. The following assertions hold: 
    \begin{enumerate}
        \item each rational map $\phi\in \mathcal{V}_\Sigma$ has minimal resultant;
        \item $\mathcal{V}_\Sigma$ has positive $\mu_n$ density.
    \end{enumerate}Consequently, with respect to the weak box density $\mu_n$, a positive proportion of rational maps have good reduction at all primes $p\in \Sigma$ and globally minimal resultant.
\end{lthm}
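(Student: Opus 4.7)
The plan is to prove the two assertions separately: (1) follows from the classical transformation law for the resultant under $\mathrm{PGL}_2$-conjugation, while (2) is obtained by applying Poonen's Theorem~3.2 of \cite{Poonensquarefree} to the resultant, viewed as a polynomial in the $2d+2$ coefficients of $\phi$.

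For (1), fix a primitive integral lift $(F,G)$ of $\phi$ and a prime $p$. Every $\gamma\in\mathrm{PGL}_2(\Q_p)$ has a representative in $M_2(\Z_p)$ whose coefficient content is a unit, and the conjugate $\phi^\gamma=\gamma^{-1}\phi\gamma$ has integer lift $(\tilde F,\tilde G)=\mathrm{adj}(\gamma)\cdot(F\circ\gamma,G\circ\gamma)$; dividing by the $p$-adic content $p^m$ of this pair produces its primitive model. Combining the substitution identity $\mathrm{Res}(F\circ\gamma,G\circ\gamma)=(\det\gamma)^{d^2}\mathrm{Res}(F,G)$, the linear-combination identity $\mathrm{Res}(\mathrm{adj}(\gamma)\cdot(F,G)^{T})=(\det\gamma)^{d}\mathrm{Res}(F,G)$, and the scaling $\mathrm{Res}(\tilde F/p^m,\tilde G/p^m)=p^{-2dm}\mathrm{Res}(\tilde F,\tilde G)$ yields
\[
v_p\bigl(\mathrm{Res}(\phi^\gamma)\bigr)-v_p\bigl(\mathrm{Res}(F,G)\bigr)\;=\;d\bigl[(d+1)\,v_p(\det\gamma)-2m\bigr]\;\in\;d\Z.
\]
Hence the minimum over $\gamma$ of $v_p(\mathrm{Res}(\phi^\gamma))$ is non-negative and lies in the residue class of $v_p(\mathrm{Res}(F,G))$ modulo $d$. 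If $v_p(\mathrm{Res}(F,G))<d$, this forces the minimum to equal $v_p(\mathrm{Res}(F,G))$ itself, so $(F,G)$ is already locally minimal at $p$. For $\phi\in\mathcal V_\Sigma$ the resultant is $d$-th-power-free, so this holds at every prime and $\phi$ has globally minimal resultant.

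For (2), identify $\mathcal R^{(d)}$ with a subset of $\Z^{2d+2}$ via Notation~\ref{notation} and let $R(\mathbf A,\mathbf B)\in\Z[A_0,\ldots,A_d,B_0,\ldots,B_d]$ denote the resultant, a polynomial of total degree $2d$. Then $\mathbf v\in\mathcal V_\Sigma$ is equivalent to the local conditions (a)~$R(\mathbf v)\ne 0$, (b)~$v_p(R(\mathbf v))<d$ for every prime $p$, and (c)~$v_p(R(\mathbf v))=0$ for every $p\in\Sigma$. Poonen's Theorem~3.2 \cite{Poonensquarefree}, applied to $R$ with cutoff $d$ and with the auxiliary constraints at $p\in\Sigma$ folded into the local factors, gives
\[
\mu_n(\mathcal V_\Sigma)\;=\;\prod_p \rho_p,
\]
where $\rho_p$ is the Haar measure on $\Z_p^{2d+2}$ of the set cut out by the relevant local condition at $p$. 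Positivity of each $\rho_p$ is witnessed by the coefficient vector of $\phi_0(z)=z^d$, whose resultant is $\pm 1$: an open $p$-adic neighbourhood of this point satisfies $v_p(R)=0$, and in particular $v_p(R)<d$. Convergence of the product follows from the Lang--Weil bound on the hypersurface $\{R=0\}$, which gives $1-\rho_p=O(p^{-d})$, summable since $d\ge 2$; combined with positivity of each factor this yields $\mu_n(\mathcal V_\Sigma)>0$.

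The step I expect to be the main obstacle is verifying Poonen's hypotheses in Theorem~3.2 in the precise regime $(\deg R,\#\text{vars},\text{power})=(2d,\,2d+2,\,d)$ uniformly for every $n\in[1,2d+2]$, and in particular controlling the contribution of ``medium-size'' primes to the error term so that the Euler product decomposition is legitimate in the weak-box setting. The weak box densities $\mu_n$, as opposed to the strong box density, are precisely engineered to permit this control; the residual task is to fold the finitely many extra good-reduction conditions at $p\in\Sigma$ into the local factors without disturbing positivity or convergence.
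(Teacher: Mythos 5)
Your proof of part (1) is correct and gives a clean, self-contained derivation of the fact that a $d$-power-free resultant is globally minimal. The paper simply cites \cite[Lemma 3.1]{BM2012} for this; your argument via the substitution, linear-combination, and scaling laws for the resultant (giving $v_p(\mathrm{Res}(\phi^\gamma)) - v_p(\mathrm{Res}(F,G)) \in d\mathbb{Z}$) amounts to a proof of that cited lemma, and the computation checks out.

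Part (2) has a genuine gap. Poonen's Theorem~3.2 in \cite{Poonensquarefree} is stated for \emph{squarefree} values, i.e.\ the exponent-$2$ condition; there is no ``cutoff $d$'' version in that paper. So for $d>2$ the theorem you invoke does not literally apply, and you would have to redo the sieve. The paper does precisely this work: it sets up a local-conditions/admissibility framework (Section~\ref{s 3}, Proposition~\ref{prop:symbol delta admissible}) and verifies $\mu_n^*$-admissibility of $V_\Sigma$ in Proposition~\ref{mu admissible proposition} by partitioning according to whether a chosen partial derivative of $\mathfrak{r}$ vanishes mod $p$, then handling the two pieces via \cite[Lemma~5.1]{Poonensquarefree}, Schwartz--Zippel, Hensel lifting, and a direct residue-count in the $n$-th coordinate (which is exactly where the weak box density is used). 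Folding in the finitely many good-reduction conditions at $\Sigma$ is indeed routine once the framework is in place, but it is not automatic from Theorem~3.2. A secondary inaccuracy: your convergence estimate $1-\rho_p = O(p^{-d})$ via Lang--Weil is not what one gets. The smooth locus of $\{\mathfrak{r}=0\}$ contributes $O(p^{-d})$, but the singular locus contributes $O(p^{-2})$, which dominates when $d>2$; the paper proves the correct bound $\pi((\cN_p^{(k)})')=O(p^{-2})$ in Proposition~\ref{prop:gamma2} by bounding the number of $\mathbb{F}_p$-points with $\mathfrak{r}=\mathfrak{s}=0$ via Noether normalization (Proposition~\ref{Cp to the 2d}). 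The product still converges, but for the reason the paper gives, not the one you state. In short: the strategy is right and the positivity witness $\phi_0(z)=z^d$ is correct, but the central sieve step (what you flag as ``the main obstacle'') is not a hypothesis check on Poonen's theorem --- it is a new argument that needs to be supplied, and it is what most of Section~\ref{s 4} of the paper is doing.
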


In the special case when $d=2$, our results are quite explicit.
\begin{lthm}[Theorem \ref{d=2 main thm}]\label{thm b}
    With respect to the weak box densities, more than $32.7\%$ of rational maps of degree $2$ over $\Q$ have squarefree, hence globally minimal, resultant.
\end{lthm}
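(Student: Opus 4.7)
The plan combines Theorem~\ref{thm a2} with an explicit Euler product computation in the spirit of Poonen \cite{Poonensquarefree}. Specializing Theorem~\ref{thm a2} to $d=2$ and $\Sigma=\emptyset$, any degree-$2$ rational map $\phi$ whose resultant is squarefree (``$d$-power free'' reduces to squarefree when $d=2$) automatically has globally minimal resultant. It therefore suffices to bound the $\mu_n$-density of
$$
\cT := \left\{\phi\in \cR^{(2)} : \op{Res}(F,G)\text{ is squarefree}\right\}
$$
from below by $0.327$ for some $n\in[1,6]$.

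Writing $F = A_0Y^2 + A_1XY + A_2X^2$ and $G = B_0Y^2 + B_1XY + B_2X^2$, one has the explicit resultant polynomial
$$
R(A_0,A_1,A_2,B_0,B_1,B_2) = (A_0B_2-A_2B_0)^2 - (A_0B_1-A_1B_0)(A_1B_2-A_2B_1),
$$
of total degree $4$ in the $6$ coefficient variables, which is irreducible, hence squarefree as a polynomial. Applying the squarefree-values theorem for weak box densities \cite[Theorem~3.2]{Poonensquarefree} to $R$ expresses the density of $\cT$ as a convergent Euler product
$$
\prod_p (1-\rho_p),\qquad \rho_p := \mu_p\bigl(\{\mathbf x\in\Z_p^{6} : p^2\mid R(\mathbf x)\}\bigr),
$$
with $\mu_p$ the normalized Haar measure on $\Z_p^6$. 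The primitivity condition built into $\cR^{(2)}\subset\Z^{6}$ via Notation~\ref{notation} is easily absorbed into each local factor, using that $p\mid\gcd(A_0,\ldots,B_2)$ already forces $p^4\mid R$ by bihomogeneity of $R$.

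One then splits the product at a cutoff $P_0$ into a finite explicit part and a tail. For the tail, the Lang--Weil estimate $\#\{R=0\}(\F_p)=p^5+O(p^{9/2})$ combined with the standard mod-$p^2$ lift count (smooth points contribute $p^{5}$ lifts each, singular points contribute at most $p^6$ each) yields $\rho_p = p^{-2} + O(p^{-3})$, so $\prod_{p>P_0}(1-\rho_p)$ can be made arbitrarily close to $1$ by taking $P_0$ large. For each small $p\le P_0$, one computes $\rho_p$ exactly by stratifying the reduction $(F \bmod p,\, G \bmod p)$ according to its common-factor behavior in $\mathbb{P}^1(\overline{\F_p})$: no common root, a simple common root, a double common root, a common linear factor to higher order, or vanishing of $F$ or $G$ modulo $p$. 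Each stratum contributes a count of lifts modulo $p^2$ that is tractable by elementary linear algebra over $\F_p$.

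The principal technical obstacle will be the small-prime case analysis, especially at $p=2$ (and secondarily $p=3$): here $\rho_p$ deviates most from $0$ and therefore largely governs the final numerical bound, and the singularity structure of $\{R=0\}$ over $\F_2$ is the most degenerate of all primes. Once the exact local factors $(1-\rho_p)$ for $p\le P_0$ are determined and combined with the tail estimate, a direct numerical check shows that the resulting Euler product exceeds $0.327$, yielding the claim.
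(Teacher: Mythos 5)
Your general framework — specialize Theorem~\ref{thm a2} to $d=2$, $\Sigma=\emptyset$, express the density as an Euler product of local squarefree conditions, and estimate the tail — matches the paper's, and your explicit resultant formula is correct (it agrees, after relabeling, with the paper's $R$). However, there is a genuine gap in the tail estimate, and you misidentify where the technical difficulty actually lies.

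Your tail bound $\rho_p = p^{-2} + O(p^{-3})$ via Lang--Weil is correct as an asymptotic, but it cannot by itself yield a concrete numerical conclusion such as $>32.7\%$: you would need an \emph{explicit} constant in that $O$-term to determine how large $P_0$ must be and how much of the tail product you are giving away, and the usual Lang--Weil constant (depending on the degree and dimension of $\{R=0\}$ and its singular locus) is not readily explicit. Your remark that ``singular points contribute at most $p^6$ each'' is exactly the point that must be controlled: the final bound hinges on an explicit upper bound for the number of $\F_p$-points on the singular locus of $\{R=0\}$. The paper handles this head-on by computing, with Macaulay2, the primary decomposition
\[
(R, \partial R) \cdot \Q[a,\ldots,f] \;=\; (ce-bf,\,cd-af,\,bd-ae) \,\cap\, (e^2-4df,\,2cd-be+2af,\,b^2-4ac),
\]
valid over $\Z[\tfrac{1}{12}]$, from which elementary counting gives $\lvert\bar B_p'\rvert < p^4+3p^3$ for $p>3$ (Proposition~\ref{prop:barBp'}), together with an elementary case analysis giving $\lvert\bar A_p'\rvert \le 2p^5-p^3$ for all $p$ (Proposition~\ref{prop:barAp'}). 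These two fully explicit bounds are what make the tail $\prod_{p>20}(1-\rho_p)$ numerically controllable and let the bound $0.33843\ldots$ from the brute-force computation over $(\Z/p\Z)^6$ for $p<20$ survive down to $>0.327$. By contrast, you identify the $p=2$ and $p=3$ cases as the ``principal technical obstacle,'' but in the paper those are among the easy steps (direct enumeration over $(\Z/p\Z)^6$); the nontrivial input is the explicit singular-locus bound for the tail. Your stratification of $(F\bmod p,\,G\bmod p)$ by common-factor type is a legitimate and arguably more conceptual alternative to the paper's brute force at small primes, but it does not address the tail issue. To complete your argument you would need to either (i) make the Lang--Weil constant for this specific quartic hypersurface explicit, or (ii) replace Lang--Weil with a direct elementary count of $\{R=0\}(\F_p)$ and its singular locus, as the paper does.
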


The following analogue of the result of Cremona-Sadek 
\cite{cremonasadek} arises: with respect to the box density, does a positive proportion of rational maps have globally minimal resultant? This question, along with related topics, will be explored in a future work.

\subsection{Methodology}  We briefly discuss the techniques used in the proof of Theorems \ref{thm a1}, \ref{thm a2} and \ref{thm b}. A classical result of Schanuel \cite{schanuel} gives the asymptotic count for the number of rational points in projective space with height $\leq x$. For two generic homogeneous polynomials $F(X,Y)$ and $G(X,Y)$ of degree $d$, 
their resultant is a homogeneous polynomial in the coefficients of $F$ and $G$. 
Hence $F$ and $G$ have a common factor if and only if the resultant polynomial vanishes. A result by Schwartz and Zippel (cf. Lemma \ref{Schzipp lemma}) provides an upper bound on the number of zeros of this polynomial with height $\leq x$. Combining this result with Schanuel's result implies that
\[
\# \cR^{(d)}(x) \sim \frac{(2x)^{2d+2}}{2\zeta(2d+2)},
\]
as detailed in Proposition \ref{count for Rd}. Following Poonen's work \cite{Poonensquarefree}, we can also study the ``box version'' of the above results.

In Section \ref{s 3}, we study properties of subsets $\mathcal{S} \subseteq \cR^{(d)}$ defined by the so called local conditions. We define local densities at each prime $p$. Proposition \ref{prop:symbol delta admissible} employs a sieve-theoretic argument to show that, if a certain admissibility condition is met, the density of $\mathcal{S}$ is the product of local densities at all primes $p$. These results are applied in Section \ref{s 4} to prove Theorems \ref{thm a1} and \ref{thm a2}. Although these local conditions are insufficient to describe the entire collection of $\phi$ with minimal resultant, they provide positive lower bounds on the density. This relies on explicit local density calculations, with a main innovation involving counting the number of $\mathbb{Z}/p^2\mathbb{Z}$-points on the integral scheme defined by the resultant polynomial and its partial derivatives. Some similar arguments have been employed in \cite{Poonensquarefree}.
In Section \ref{s 5}, an explicit primary decomposition for this integral scheme is used to obtain an explicit lower bound on the weak box densities when $d=2$. Our computations are aided by  \textit{Macaulay2}.

\subsection{Acknowledgements} The authors thank Igor Shparlinski for helpful comments. K.~N.~ is partially supported by NSERC grant RGPIN-2018-03770 and CRC tier-2 research stipend 950-231716 from the Government of Canada.

\subsection*{Data availability statement} All relevant data has been included in the paper.

\section{The height density, box density, and weak box densities}
Let $d\geq 2$ and let $\cR^{(d)}$ be the set of univariate rational maps of degree $d$ with rational coefficients. This section establishes key concepts and notation for this article regarding the distribution of certain subsets of $\cR^{(d)}$. 
We may regard $\cR^{(d)}$ as a subset of $\mathbb{P}^{2d+1}(\Q)$ which, in turn, might be regarded as a subset of $\Z^{2d+2}$.

We define the height $H(\phi)$ of $\phi\in\cR^{(d)}$ as the absolute multiplicative Weil height of the corresponding point in $\mathbb{P}^{2d+1}(\Q)$. 
Then we define the height density of a subset $\mathcal{S}$ of $\cR^{(d)}$. 
Using classical results of Schanuel \cite{schanuel} for the distribution of rational points on projective spaces and the Schwartz-Zippel lemma \cite{Schwartz,Zippel}, we derive an asymptotic formula for 
$$\cR^{(d)}(x):=\{\phi\in\cR^{(d)}:\ H(\phi)\leq x\}.$$

We also recall the notion of box density and weak box densities for subsets of 
$\Z^{2d+2}$ 
from Poonen's work \cite{Poonensquarefree}. We have the analogues of
Schanuel's result and an asymptotic formula for counting $\cR^{(d)}$ in a box; however the statement of these analogues is delayed until the next section since the proof relies on certain sieving results. Finally we define
the box density and weak box densities for a subset $\mathcal{S}$ of $\cR^{(d)}$.
 
\subsection{The height density}
Let $\phi\in\cR^{(d)}$. In homogeneous coordinates, $\phi$ is given by $\phi(X/Y) = \frac{F(X, Y)}{G(X, Y)}$, where $F$ and $G$ are coprime homogeneous polynomials of degree $d$:
\[ F(X, Y) = \sum_{i=0}^d A_i X^i Y^{d-i} \quad \text{and} \quad G(X, Y) = \sum_{j=0}^d B_j X^j Y^{d-j}, \]
with $A_i, B_j \in \mathbb{Q}$.
\begin{definition}
    To the map $\phi$, we associate the point 
    \[ P=P(\phi) := [A_0 : \cdots : A_d : B_0 : \cdots : B_d] \in \mathbb{P}^{2d+1}(\mathbb{Q}). \]
     The \emph{height} $H(\phi)$ of the pair $\phi$ is defined as follows
    \begin{equation}\label{height of phi}
    H(\phi)=H(P):= \prod_v \max\{|A_i|_v, |B_j|_v:\ 0 \leq i, j \leq d\}, 
    \end{equation}
    where $v$ ranges over all places of $\mathbb{Q}$.
\end{definition}

Thus we count models for rational maps, and not isomorphism classes. Nonetheless, this seems to be natural way to order rational maps, the definition is analogous to counting Weierstrass equations for elliptic curves according to their height (cf. \cite{cremonasadek}). For a detailed treatment of heights, we refer to \cite{BomGub}.

\begin{notn}\label{notation}
From now on, we shall identify $\mathbb{P}^{2d+1}(\Q)$ with the subset of $\Z^{2d+2}$ consisting of points $Q=(C_1, \dots, C_{2d+2})$ for which 
\begin{enumerate}
    \item $C_i>0$, where $i$ is the minimal index for which $C_i\neq 0$. In other words, the first non-zero coordinate is positive.
    \item and $\gcd(C_1, \dots, C_{2d+2}) = 1$.
\end{enumerate}
We shall also denote the coordinates of $Q$ by $(A_0, \dots, A_d, B_0, \dots, B_d)$, i.e., $A_i:=C_{i+1}$ and $B_j:=C_{d+j+2}$. 
Note that if $Q$ satisfies (2), then exactly one of the points $Q$ or $-Q$ satisfies (1).
\end{notn}
Note that if $(A_0, \dots, A_d, B_0, \dots, B_d) \in \mathbb{Z}^{2d+2}$ corresponds to $ P(\phi) $, then
\[ H(\phi) = \max\{|A_i|, |B_j|: 0 \leq i, j \leq d\}, \] see \cite[Chapter~1]{BomGub}.

\begin{notn}\label{notation (x)}
Let $x>0$. For a subset $S$ of $\Z^{2d+2}$, let
$$S(x)=\{(C_1,\ldots,C_{2d+2})\in S:\ \max|C_i|\leq x\}.$$
For a subset $\mathcal{S}$ of $\mathbb{P}^{2d+1}(\Q)$, let
$$\mathcal{S}(x)=\{P\in\mathcal{S}:\ H(P)\leq x\}.$$
However we typically use the notation $\mathbb{P}^{2d+1}(\Q,x)$ instead
of $\mathbb{P}^{2d+1}(\Q)(x)$.
\end{notn}

We have the following well-known results by Schanuel, Schwartz, and Zippel:
\begin{proposition}[Schanuel]\label{schanuel prop}
    \begin{equation}\label{schanuel eqn}\# \mathbb{P}^{2d+1}(\Q, x)= \frac{1}{2\zeta(2d+2)}(2x)^{2d+2}+O\left( x^{2d+1}\log x\right).\end{equation}
\end{proposition}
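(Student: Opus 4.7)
The plan is to reduce the count to a Möbius-inversion estimate on primitive lattice points in a box. By Notation~\ref{notation}, the points of $\mathbb{P}^{2d+1}(\Q)$ correspond bijectively to nonzero primitive integer vectors $v \in \Z^{2d+2}$ whose first nonzero coordinate is positive, and the height corresponds to $\max_i |v_i|$. Pairing $v$ with $-v$ shows that forgetting the sign condition doubles the count, so it suffices to estimate $P(x)$, defined as the number of primitive nonzero $v \in \Z^{2d+2}$ with $\max_i |v_i| \leq x$, and divide by two at the end.

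Next I would introduce $Q(y) := (2\lfloor y \rfloor + 1)^{2d+2}$, the number of integer lattice points in $[-y,y]^{2d+2}$. Every nonzero integer vector decomposes uniquely as $v = k w$ with $k \geq 1$ and $w$ primitive, yielding the identity $Q(x) - 1 = \sum_{k \geq 1} P(x/k)$. Möbius inversion then gives $P(x) = \sum_{k \leq x} \mu(k) \bigl( Q(x/k) - 1 \bigr)$. Substituting the binomial expansion $Q(y) = (2y)^{2d+2} + O(y^{2d+1})$, valid for $y \geq 1$, and separating main term from error produces $(2x)^{2d+2} \sum_{k \leq x} \mu(k)/k^{2d+2}$ plus an error of size $O\!\bigl( x^{2d+1} \sum_{k \leq x} k^{-(2d+1)} \bigr)$. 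Extending the Möbius sum to infinity and using $\sum_{k \geq 1} \mu(k)/k^{2d+2} = 1/\zeta(2d+2)$ introduces an additional $O(x)$ error from the tail, safely absorbed.

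For $d \geq 2$ the exponent $2d+1 \geq 5$, so $\sum_{k \geq 1} k^{-(2d+1)}$ converges absolutely and the error collapses to $O(x^{2d+1})$, which is in fact strictly stronger than the $O(x^{2d+1}\log x)$ stated in the proposition; the logarithmic factor is slightly wasteful but compatible with the sharper bound. Dividing by two then delivers the claimed asymptotic. The argument contains no real obstacle beyond careful bookkeeping of the floor function in $Q(y)$ and the truncation error in the Möbius sum, both routine given the convergence of the relevant zeta-type series for $d \geq 2$.
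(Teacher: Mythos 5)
Your argument is correct, and it is in fact self-contained, whereas the paper's ``proof'' of Proposition~\ref{schanuel prop} consists solely of citing Theorem~1 of Schanuel's paper. Schanuel's original theorem is stated and proved for arbitrary number fields, which requires a substantially heavier apparatus (units, ideal classes, a Minkowski-style lattice count); over $\Q$ it collapses to exactly the M\"obius-inversion computation you carry out: pair each projective point with its two primitive lifts, write $Q(y)-1=\sum_{k\ge 1}P(y/k)$, invert, and expand. Your bookkeeping of the error terms is right: for $\mathbb{P}^{2d+1}$ with $d\ge 2$ the exponent $2d+1\ge 5$ makes $\sum_k k^{-(2d+1)}$ and the M\"obius tail both $O(1)$, so you genuinely obtain the sharper error $O(x^{2d+1})$; the $\log x$ in the proposition is an artifact of quoting Schanuel's uniform statement (which must accommodate $\mathbb{P}^1(\Q)$, where the harmonic sum does produce a logarithm). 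So your proof is not only valid but slightly improves the stated error term, which is harmless since the paper only uses the leading asymptotic. The one thing worth double-checking in a final write-up is the base case of the M\"obius inversion: one needs $Q(y)-1=0$ for $0<y<1$ so the sum $\sum_{k\ge 1}\mu(k)(Q(x/k)-1)$ really truncates at $k\le x$; you implicitly use this and it holds because $Q(y)=1$ counts only the origin when $y<1$.
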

\begin{proof}
    This is from \cite[Theorem 1]{schanuel}.
\end{proof}

\begin{lemma}[Schwartz-Zippel]\label{Schzipp lemma}
    Let $n$ be a positive integer, $R$ an integral domain, and $f\in R[x_1, \dots, x_n]$ a non-zero polynomial. There exists a positive constant $C$ depending only on $f$ such that the following holds. Let $I_1,\ldots,I_n$ be non-empty finite subsets of $R$, then the set
    $$\{(x_1,\ldots,x_n)\in I_1\times\cdots\times I_n:\ f(x_1,\ldots,x_n)=0\}$$
    has cardinality at most
    $$C\cdot |I_1|\cdots |I_n|\left(\frac{1}{|I_1|}+\cdots+\frac{1}{|I_n|}\right).$$
\end{lemma}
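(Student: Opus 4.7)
The plan is to establish this by induction on the number of variables $n$, with the inductive hypothesis that the claim holds for all nonzero polynomials in fewer variables over $R$. Throughout, I will take the constant $C$ to be the total degree of $f$ (any larger value would work as well). The key algebraic input is that $R$ is an integral domain, which ensures that substituting elements of $R$ into the leading coefficient of a polynomial (in one distinguished variable) preserves nonvanishing of that polynomial.

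For the base case $n=1$, a nonzero polynomial $f \in R[x_1]$ of degree $D = \deg(f)$ has at most $D$ roots in $R$ (here we use the integral domain hypothesis). Thus the number of zeros lying in $I_1$ is at most $D = D \cdot |I_1| \cdot (1/|I_1|)$, so $C = D$ works.

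For the inductive step, I would single out the last variable and write
\[
f(x_1,\ldots,x_n) \;=\; \sum_{k=0}^{D} x_n^{k}\, g_k(x_1,\ldots,x_{n-1}),
\]
where $D = \deg_{x_n}(f)$ and $g_D \in R[x_1,\ldots,x_{n-1}]$ is nonzero. Given a zero $(a_1,\ldots,a_n) \in I_1 \times \cdots \times I_n$ of $f$, I split into two cases. In Case A, $g_D(a_1,\ldots,a_{n-1}) = 0$. Since $g_D$ is nonzero, the inductive hypothesis yields a constant $C'$ (depending only on $g_D$, hence only on $f$) so that the number of such tuples $(a_1,\ldots,a_{n-1})$ is at most $C' \prod_{i=1}^{n-1} |I_i| \cdot \sum_{i=1}^{n-1} 1/|I_i|$, and any $a_n \in I_n$ is permitted. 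In Case B, $g_D(a_1,\ldots,a_{n-1}) \neq 0$, so the specialization $f(a_1,\ldots,a_{n-1}, x_n) \in R[x_n]$ is a nonzero polynomial of degree exactly $D$ (again by the integral domain property), hence has at most $D$ roots $a_n$. There are at most $\prod_{i=1}^{n-1} |I_i|$ choices of $(a_1,\ldots,a_{n-1})$ in this case.

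Summing the two contributions gives the bound
\[
|I_n| \cdot C' \prod_{i=1}^{n-1} |I_i| \cdot \sum_{i=1}^{n-1} \frac{1}{|I_i|} \;+\; D \cdot \prod_{i=1}^{n-1} |I_i| \;=\; \prod_{i=1}^{n} |I_i| \cdot \left( C' \sum_{i=1}^{n-1} \frac{1}{|I_i|} + \frac{D}{|I_n|}\right),
\]
which is at most $C \cdot |I_1|\cdots|I_n| \cdot \sum_{i=1}^{n} 1/|I_i|$ upon taking $C := \max(C', D)$. Unwinding the induction shows that $C$ may be taken to be $\deg(f)$.

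There is no serious obstacle here; the argument is the classical Schwartz--Zippel induction, and the only point that requires the integral domain hypothesis is the use of the fact that a degree-$D$ univariate polynomial has at most $D$ roots and that specialization of coefficients does not spuriously reduce the degree when the leading coefficient remains nonzero. The mild packaging issue — carrying the constant $C$ through the induction rather than $\deg(f)$ — is handled simply by taking the maximum at each stage.
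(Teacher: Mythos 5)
Your proof is correct; it is the standard inductive argument for the Schwartz--Zippel bound, with the induction on the number of variables, the split according to whether the leading coefficient $g_D$ of $f$ in the distinguished variable vanishes, and the use of the integral domain hypothesis to control roots of the univariate specialization. The paper itself does not prove this lemma: it simply cites Schwartz and Zippel, so there is no in-paper argument to compare against, and your self-contained proof fills that in faithfully. One trivial quibble: if $f$ is a nonzero constant then $\deg f = 0$, which is not a \emph{positive} constant as the statement requires; but then the zero set is empty and one may take $C = 1$. Apart from that cosmetic point, the bookkeeping $\deg(g_D) \le \deg(f) - D \le \deg(f)$ and $D = \deg_{x_n}(f) \le \deg(f)$ correctly justifies carrying $C = \deg(f)$ through the induction.
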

\begin{proof}
    This was proven independently by Schwartz \cite{Schwartz} and Zippel \cite{Zippel}.
\end{proof}

\begin{proposition}\label{count for Rd}
   We have 
    \[\# \cR^{(d)}(x)= \frac{1}{2\zeta(2d+2)}(2x)^{2d+2}+O\left( x^{2d+1}\log x\right),\] where the implied constant depends only on $d$.
    Consequently,
    \[\lim_{x\rightarrow\infty} \frac{\# \cR^{(d)}(x)}{\# \mathbb{P}^{2d+1}(\Q, x)}=1.\]
\end{proposition}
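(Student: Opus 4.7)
The plan is to realize $\cR^{(d)}$ as the complement, inside $\mathbb{P}^{2d+1}(\Q)$, of the vanishing locus of a single non-zero polynomial, namely the resultant. Given $\phi\in\mathbb{P}^{2d+1}(\Q)$ represented by a normalized integer tuple $(A_0,\dots,A_d,B_0,\dots,B_d)$, the associated homogeneous forms $F(X,Y)=\sum A_iX^iY^{d-i}$ and $G(X,Y)=\sum B_jX^jY^{d-j}$ define an element of $\cR^{(d)}$ exactly when they are coprime in $\Q[X,Y]$, which is equivalent to
\[
R(A_0,\dots,A_d,B_0,\dots,B_d):=\operatorname{Res}(F,G)\in\Z[A_0,\dots,B_d]
\]
being non-zero. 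Since coprime pairs of degree $d$ clearly exist (e.g.\ $F=X^d$, $G=Y^d$), $R$ is a non-zero polynomial in the $n=2d+2$ variables $A_0,\dots,B_d$, so $\cR^{(d)}(x)$ differs from $\mathbb{P}^{2d+1}(\Q,x)$ only by the set of projective points satisfying $R=0$.

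The first step is to bound that exceptional set. Every point of $\mathbb{P}^{2d+1}(\Q,x)$ has a unique normalized integer representative with all coordinates bounded in absolute value by $x$, so it is enough to count tuples $(A_0,\dots,B_d)\in\Z^{2d+2}$ with $\max|C_i|\le x$ and $R=0$. I would invoke Lemma~\ref{Schzipp lemma} with $f=R$ and the finite sets $I_1=\cdots=I_{2d+2}=\Z\cap[-x,x]$, each of cardinality at most $2\lfloor x\rfloor+1$. This yields
\[
\#\bigl\{(A_0,\dots,B_d)\in([-x,x]\cap\Z)^{2d+2}:R=0\bigr\}\ll_d x^{2d+1},
\]
and hence $\#\bigl(\mathbb{P}^{2d+1}(\Q,x)\setminus\cR^{(d)}(x)\bigr)=O(x^{2d+1})$, where the implied constant depends only on $d$.

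Combining this estimate with Schanuel's formula (Proposition~\ref{schanuel prop}) gives
\[
\#\cR^{(d)}(x)=\#\mathbb{P}^{2d+1}(\Q,x)+O(x^{2d+1})=\frac{(2x)^{2d+2}}{2\zeta(2d+2)}+O(x^{2d+1}\log x),
\]
from which the limit $\#\cR^{(d)}(x)/\#\mathbb{P}^{2d+1}(\Q,x)\to 1$ follows at once by dividing through. I do not anticipate any serious obstacle: the argument is simply the assembly of three ingredients already in hand, namely the resultant criterion for coprimality, the Schwartz--Zippel bound, and Schanuel's asymptotic. The only minor piece of bookkeeping is the passage from lattice-point counts (which is what Schwartz--Zippel directly bounds) to counts of normalized projective representatives, but the latter injects into the former, so the $O(x^{2d+1})$ bound transfers with no loss.
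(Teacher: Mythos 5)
Your proposal is correct and follows exactly the paper's own argument: identify $\cR^{(d)}$ as the complement in $\mathbb{P}^{2d+1}(\Q)$ of the zero set of the generic resultant, bound that zero set by Schwartz--Zippel (Lemma~\ref{Schzipp lemma}) to get an $O_d(x^{2d+1})$ error, and conclude via Schanuel's asymptotic (Proposition~\ref{schanuel prop}). No gaps, and the bookkeeping remark about normalized representatives injecting into the box of lattice points is exactly the right (and only) subtlety.
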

\begin{proof}
    Let $f(x_0,\ldots,x_d,y_0,\ldots,y_d)$ be the resultant of the two generic polynomials $\displaystyle \sum_{i=0}^d x_i X^i Y^{d-i} $ and
    $\displaystyle \sum_{i=0}^d y_i X^i Y^{d-i}$. 
    A point $(A_0, \dots, B_d)$ in $\mathbb{P}^{2d+1}(\Q)$ (see Notation~\ref{notation}) is not in $\cR^{(d)}$ if and only if
    the polynomials $ F(X, Y) = \sum_{i=0}^d A_i X^i Y^{d-i} $ and $ G(X, Y) = \sum_{j=0}^d B_j X^j Y^{d-j} $ have a non-trivial common factor, hence
    $f(A_0,\ldots,B_d)=0$. From Lemma~\ref{Schzipp lemma}, we have
    $$\#(\mathbb{P}^{2d+1}(\Q,x)\setminus \cR^{(d)}(x))=O(x^{2d+1})$$
    where the implied constant depends only on $f$, hence only on $d$. Then we apply
    Proposition~\ref{schanuel prop} to finish the proof.
\end{proof}

\begin{definition}Let $\cS$ be a subset of $\mathbb{P}^{2d+1}(\Q)$, the \emph{height density} of $\cS$ is 
\[\mathfrak{d}(\cS):=\lim_{x\rightarrow \infty} \frac{\# \cS(x)}{\# \mathbb{P}^{2d+1}(\Q,x)},\] provided the limit exists. The lower and upper height densities are 
\[\underline{\mathfrak{d}}(\cS):=\liminf_{x\rightarrow \infty} \frac{\# \cS(x)}{\# \mathbb{P}^{2d+1}(\Q,x)}\text{ and }\overline{\mathfrak{d}}(\cS):=\limsup_{x\rightarrow \infty} \frac{\# \cS(x)}{\# \mathbb{P}^{2d+1}(\Q,x)}.\]
Let $S$ be a subset of $\Z^{2d+2}$, the affine height density of $S$ is
$$\mathfrak{d}^*(S)=\lim_{x\rightarrow \infty} \frac{\# S(x)}{(2x)^{2d+2}}$$
provided the limit exists. The lower and upper affine height densities $\underline{\mathfrak{d}}^*$ and $\overline{\mathfrak{d}}^*$
are defined using $\liminf$ and $\limsup$ respectively.
\end{definition}

\begin{remark}\label{rem:height density}
What we call the affine height density is simply called the natural density in the literature. The reason for our somewhat unusual terminology and notation will become clear in the next section when we have various subsets of projective spaces and their affine liftings. When $\mathcal{S}$ is a set of rational maps of degree $d$, it appears more natural to replace $\#\mathbb{P}^{2d+1}(\Q,x)$
by $\#\cR^{(d)}(x)$ in the definition of $\mathfrak{d}(S)$, $\underline{\mathfrak{d}}(S)$, and $\overline{\mathfrak{d}}(S)$. But this is equivalent to the given definition thanks to Proposition~\ref{count for Rd}.
\end{remark}

\subsection{The box density and weak box densities}

Given a tuple of positive real numbers $\vec{r}=(r_1, \dots, r_{2d+2})$, let 
\[\op{Box}(\vec{r}):=\{(x_1, \dots, x_{2d+2})\in\Z^{2d+2}:\ |x_i|\leq r_i\text{ for all }i\in [1, 2d+2]\}.\]
\begin{notn}\label{notation (x) for proj}
Let $x>0$. For a subset $S$ of $\Z^{2d+2}$, let
$$S(\vec{r}):=\{(C_1,\ldots,C_{2d+2})\in S:\ |C_i|\leq r_i\ \text{for $1\leq i\leq 2d+2$}\}.$$
For a subset $\mathcal{S}$ of $\mathbb{P}^{2d+1}(\Q)$, let
$$\mathcal{S}(\vec{r}):=\{(C_1,\ldots,C_{2d+2})\in\mathcal{S}\ \text{as in Notation~\ref{notation}}:\ |C_i|\leq r_i\ \text{for $1\leq i\leq 2d+2$}\}.$$
However we typically use the notation $\mathbb{P}^{2d+1}(\Q,\vec{r})$ instead of
$\mathbb{P}^{2d+1}(\Q)(\vec{r})$.
\end{notn}

\begin{definition}
    Let $\mathcal{S}$ be a subset of $\mathbb{P}^{2d+1}(\Q)$, the box density of $\mathcal{S}$ is
    $$\mu(\mathcal{S}):=\lim_{r_1,\dots,r_{2d+2}\rightarrow \infty}\frac{\# \mathcal{S}(\vec{r})}{\#\mathbb{P}^{2d+2}(\Q,\vec{r})}$$
    provided the limit exists. The lower and upper box densities $\underline{\mu}(\mathcal{S})$ and $\overline{\mu}(\mathcal{S})$ are defined using $\liminf$ and $\limsup$ respectively.

    Let $S$ be a subset of $\Z^{2d+2}$, the affine box density of $S$ is
    $$\mu^*(S):=\lim_{r_1,\dots,r_{2d+2}\rightarrow\infty}\frac{\# S(\vec{r})}{\#\op{Box}(\vec{r})}$$
    provided the limit exists. The lower and upper affine box densities
    $\underline{\mu}^*(S)$ and $\overline{\mu}^*(S)$ are defined using 
    $\liminf$ and $\limsup$ respectively.
\end{definition}

Fix a natural number $n$ in the range $[1, 2d+2]$. In what follows, set 
\[\lim_{r_1, \dots, \hat{r}_{n}, \dots, r_{2d+2}\rightarrow \infty}:=\lim_{r_1, \dots, r_{n-1}, r_{n+1}, \dots, r_{2d+2}\rightarrow \infty},\] with $r_n$ omitted. The upper and lower limits 
\[\limsup_{r_1, \dots, \hat{r}_{n}, \dots, r_{2d+2}\rightarrow \infty}\text{ and } \liminf_{r_1, \dots, \hat{r}_{n}, \dots, r_{2d+2}\rightarrow \infty}\] are defined in a similar way.
\begin{definition}\label{def:weak box densities}
    Let $\mathcal{S}$ be a subset of $\mathbb{P}^{2d+1}(\Q)$. The 
    lower $n$-th weak box density $\underline{\mu_n}(\mathcal{S})$ and upper $n$-th weak box density $\overline{\mu_n}(\mathcal{S})$ are defined as:
    \[\begin{split} & \underline{\mu_n}(\cS):=\liminf_{r_1, \dots, \hat{r}_{n}, \dots, r_{2d+2}\rightarrow \infty}\left\{\liminf_{r_{n}\rightarrow \infty} \left(\frac{\# \cS(\vec{r})}{\# \mathbb{P}^{2d+1}(\Q,\vec{r})}\right)\right\}\ \text{and} \\
& \overline{\mu_n}(\cS):=\limsup_{r_1, \dots, \hat{r}_{n}, \dots, r_{2d+2}\rightarrow \infty}\left\{\limsup_{r_{n}\rightarrow \infty} \left(\frac{\# \cS(\vec{r})}{\# \mathbb{P}^{2d+1}(\Q,\vec{r})}\right)\right\}.
\end{split}\]
    When $\underline{\mu_n}(\mathcal{S})=\overline{\mu_n}(\mathcal{S})$,
    we define the $n$-th weak box density $\mu_n(\mathcal{S})$ to be the common value.

    Let $S$ be a subset of $\Z^{2d+2}$, we define the lower affine $n$-th weak box density $\underline{\mu_n}^*(S)$, the upper affine $n$-th weak box density $\overline{\mu_n}^*(S)$, and the affine $n$-th weak box density $\mu_n^*(S)$ in a similar manner by replacing
    $\#\mathbb{P}^{2d+1}(\Q,\vec{r})$ by $\# \op{Box}(\vec{r})$.
\end{definition}

We note that the definition of the affine densities in this subsection are taken from Poonen's paper \cite{Poonensquarefree}; however his terminologies do not include the word ``affine'' and his notations do not include the star superscript. Since $\cR^{(d)}$ is naturally a subset of $\mathbb{P}^{2d+1}(\Q)$, we reserve the simpler terminologies and notations for the projective setting.  As in Remark~\ref{rem:height density}, when $\mathcal{S}\subseteq \cR^{(d)}$, it is natural to define the various densities of $\mathcal{S}$ using $\# \cR^{(d)}(\vec{r})$ instead of
$\#\mathbb{P}^{2d+1}(\Q,\vec{r})$. This is equivalent to the given definition thanks to the box version of Proposition~\ref{schanuel prop}
and Proposition~\ref{count for Rd} established in the next section.
For $\mathcal{S}\subseteq\mathbb{P}^{2d+1}(\Q)$ (respectively
$S\subseteq \Z^{2d+2}$), if the box density $\mu(\mathcal{S})$
(respectively $\mu^*(S)$) exists, then the height density
$\mathfrak{d}(\mathcal{S})$ (respectively $\mathfrak{d}^*(S)$) and the
weak box density $\mu_n(\mathcal{S})$ (respectively $\mu_n^*(S)$) exist
and have the same value.

\section{Projective sets with prescribed local conditions and their affine liftings}\label{s 3}

For a positive integer $m$ and a commutative unital ring $R$, the projective space $\mathbb{P}^m(R)$ consists of the equivalence classes of
$$\{(x_0,\dots,x_m)\in R^{m+1}:\ \text{$x_i$ is a unit for some $i$}\}$$
under the relation $(x_0,\dots,x_m)\sim (y_0,\dots,y_m)$ iff 
$(x_0,\dots,x_m)=c(y_0,\dots,y_m)$ for some unit $c$. For $Q=(x_0,\dots,x_m)$ in the above set, the equivalence class of $Q$ in 
$\mathbb{P}^m(R)$ is denoted $[Q]$.

Throughout this section, fix an integer $d\geq 2$. We discuss subsets of
$\mathbb{P}^{2d+1}(\Q)$ defined by local conditions, their so called affine liftings, and some results concerning the various densities in the previous section.
We adopt the following heuristic to help the reader keep track of the notation. When using capital letters to denote a set of points, those in calligraphic font usually denote subsets of projective spaces, those in normal font usually denote subsets of affine spaces, and the bar notation denotes sets of points defined over the ring of integers modulo a prime power.

\subsection{Local conditions and affine liftings} 
\par  Given a point \[Q=(A_0, \dots, A_d, B_0, \dots, B_d)\in \mathbb{P}^{2d+1}(\Q)\] as in Notation~\ref{notation}, and a natural number $N\geq 2$, we let $Q_N$ be the reduction of $Q$ modulo $N$. The reduction is an element in $\mathbb{P}^{2d+1}(\Z/N\Z)$. 

Let $p$ be a prime. A subset $\mathcal{S}_p$ of $\mathbb{P}^{2d+1}(\Q)$ is defined by a local condition at $p$ if it consists of all points $P$ that reduce to a prescribed set of residue classes modulo a fixed power of $p$. In greater detail, there is an integer $n_p\geq 1$ and a set $\bar{\mathcal{S}}_p\subseteq \mathbb{P}^{2d+1}\left(\Z/p^{n_p} \Z\right)$ such that \[\mathcal{S}_p=\{P\in \mathbb{P}^{2d+1}(\Q)\mid P_{p^{n_p}}\in \bar{\mathcal{S}}_p\}.\]
We define 
\begin{equation}\label{eq:picSp definition}
\pi(\mathcal{S}_p):=\frac{\# \bar{\mathcal{S}}_p}{\# \mathbb{P}^{2d+1}\left(\Z/p^{n_p}\Z \right)}=\frac{p^{n_p-1}(p-1)\# \bar{\mathcal{S}}_p}{p^{(2d+2)n_p}\left(1-p^{-(2d+2)}\right)}.
\end{equation}
A set $\cS\subseteq \mathbb{P}^{2d+1}(\Q)$ is defined by local conditions if it is an intersection of the form $\cS=\bigcap_p \cS_p$, where $p$ ranges over all primes, and $\cS_p$ is defined by a local  condition at $p$. At each prime $p$, we set $\cS_p'=\mathbb{P}^{2d+1}(\Q)\setminus \cS_p$; note that $\pi(\cS_p')$ is also defined by a local condition at $p$ (since it is the preimage of 
the complement of $\bar{\mathcal{S}}_p$) and $\pi(\cS_p')=1-\pi(\cS_p)$.

Suppose $\mathcal{S}_p$ is defined by a local condition at $p$. The affine lifting of $\bar{\mathcal{S}}_p$ is the set:
\[\begin{split}\bar{S}_p:=& \{Q=(\bar{A}_0, \dots, \bar{A}_d, \bar{B}_0, \dots, \bar{B}_d)\in (\Z/p^{n_p}\Z)^{2d+2} \\ 
 &  \mid p\text{ does not divide some entry of }Q \text{ and }[Q]\in \bar{\mathcal{S}}_p\}.\end{split}\]
The affine lifting $S_p\subseteq \Z^{2d+2}$ of $\mathcal{S}_p$ is the preimage of $\bar{S}_p$ with respect to the reduction map $\Z^{2d+2}\rightarrow (\Z/p^{n_p}\Z)^{2d+2}$. We define
\begin{equation}\label{eq:piSp definition}
\pi(S_p):=\frac{\# \bar{S}_p}{p^{(2d+2)n_p}},
\end{equation}
then we have
\begin{equation}\label{S_p eqn}
\mu^*(S_p)=\lim_{r_1,\dots,r_{2d+2}\rightarrow \infty}\frac{\# S_p(\vec{r})}{\#\op{Box}(\vec{r})}=\pi(S_p).
\end{equation}
From $\# \bar{S}_p=\#(\Z/p^{n_p}\Z)^*\cdot\# \bar{\mathcal{S}}_p= p^{n_p-1}(p-1)\#\bar{\mathcal{S}}_p$, we have
 \begin{equation}\label{eq:pi vs pi}\pi(\mathcal{S}_p)=\frac{\#\bar{S}_p}{p^{(2d+2)n_p}\left(1-p^{-(2d+2)}\right)}=\frac{\pi(S_p)}{1-p^{-(2d+2)}}.\end{equation}

Finally, if $\mathcal{S}=\bigcap_p \cS_p$ is defined by local conditions then the 
affine lifting $S$ of $\mathcal{S}$ is the set
$S=\bigcap_p S_p$ where each $S_p$ is the affine lifting of
$\mathcal{S}_p$. It is immediate from the definition that $S$ consists precisely the points $Q=(C_1,\ldots,C_{2d+2})$ such that
$\gcd(C_1,\ldots,C_{2d+2})=1$ and $[Q]\in \mathcal{S}_p$ for every $p$. 
In other words, from Notation~\ref{notation}, $S$ consists 
of exactly pairs of points $Q$ and $-Q$ such that one of them belongs to $\mathcal{S}$. Hence for every $\vec{r}$, we have:
\begin{equation}\label{eq:S and calS}
    \# \mathcal{S}(\vec{r})=\frac{1}{2} \# S(\vec{r}). 
\end{equation}

We conclude this subsection with the box version of Proposition~\ref{schanuel prop} and Proposition~\ref{count for Rd}. The idea used in the proof will be extended in the next subsection.

\begin{lemma}\label{lem:T_>Z}
    Given a real number $Z$, let $T_{>Z}$ be the set of $(C_1,\ldots,C_{2d+2})\in\Z^{2d+2}$ such that $\gcd(C_1,\ldots,C_{2d+2})$ is divisible by a prime $p>Z$. We have
    $$\lim_{Z\rightarrow\infty}\limsup_{r_1,\ldots,r_{2d+2}\rightarrow\infty}\frac{\# T_{>Z}(\vec{r})}{\# \op{Box}(\vec{r})}=0.$$
\end{lemma}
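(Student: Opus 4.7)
The plan is to split the primes appearing in the definition of $T_{>Z}$ at the threshold $m := \min_i r_i$, since primes above $m$ force one coordinate of any admissible tuple in the box to vanish, and hence contribute a negligible count.

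First I would rewrite $T_{>Z} = \bigcup_{p > Z} U_p$, where $U_p := \{Q \in \Z^{2d+2} : p\mid C_i \text{ for all } i\}$, and split
\[\#T_{>Z}(\vec r) \le \#\Bigl(\bigcup_{Z<p\le m}U_p(\vec r)\Bigr) + \#\Bigl(\bigcup_{p>m}U_p(\vec r)\Bigr).\]
For the tail, I would observe that if $p > r_k$ where $k$ is an index realizing $m$, any $Q \in U_p(\vec r)$ satisfies $p\mid C_k$ with $|C_k|\le r_k<p$, forcing $C_k = 0$. Thus the entire tail union sits inside the hyperplane slice $\{Q \in \op{Box}(\vec r): C_k=0\}$, whose cardinality is $\#\op{Box}(\vec r)/(2\lfloor r_k\rfloor+1) \le \#\op{Box}(\vec r)/m$.

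For the head, I would apply the naive union bound together with the estimates $2\lfloor r_i/p\rfloor+1 \le 2r_i/p+1$ and $2\lfloor r_i\rfloor+1 \ge r_i$ (valid for $r_i\ge 1$). For $p\le m$ one has $1/r_i\le 1/m\le 1/p$, so $2/p + 1/r_i \le 3/p$, yielding $\#U_p(\vec r)/\#\op{Box}(\vec r) \le (3/p)^{2d+2}$. Summing over $Z<p\le m$ gives a contribution of at most $3^{2d+2}\sum_{p>Z}p^{-(2d+2)}$. Combining,
\[\frac{\#T_{>Z}(\vec r)}{\#\op{Box}(\vec r)} \le 3^{2d+2}\sum_{p>Z}p^{-(2d+2)} + \frac{1}{m}.\]
Taking $\limsup$ as $r_1,\ldots,r_{2d+2}\to\infty$ in the joint sense (so that $m\to\infty$) kills the second summand, and then letting $Z\to\infty$ kills the first, since $2d+2\ge 6$ makes the series $\sum_p p^{-(2d+2)}$ convergent.

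The step I expect to be the main obstacle --- and the one requiring an idea beyond routine bookkeeping --- is the tail $p>m$, because a naive union bound over the infinitely many primes above $m$ overcounts the zero tuple and would diverge. The clean remedy is the observation that every such prime simultaneously forces the coordinate of minimum allowed magnitude to vanish, so the whole tail union collapses into a single hyperplane section whose ratio against $\#\op{Box}(\vec r)$ is precisely $1/(2\lfloor m\rfloor+1)$.
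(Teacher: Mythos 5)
Your proof is correct, and at heart it uses the same estimate as the paper: a union bound over primes $p>Z$ of the sets $U_p$, controlled by the convergence of $\sum_p p^{-(2d+2)}$. The added value of your argument is the explicit splitting of the primes at $m=\min_i r_i$. The paper's own proof asserts in one line that $\# T_{>Z}(\vec r)=O\bigl(\sum_{p>Z}r_1\cdots r_{2d+2}/p^{2d+2}\bigr)$ with constant depending only on $d$, but this uniform bound is delicate for primes $p$ exceeding some of the $r_i$'s: there $\#U_p(\vec r)=\prod_i(2\lfloor r_i/p\rfloor+1)$ can be much larger than $\prod_i r_i/p^{2d+2}$, and the naive infinite union bound even counts the origin infinitely often. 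Your observation that any $p>m$ forces the minimizing coordinate to vanish, so the entire tail collapses into a single hyperplane slice of relative measure at most $1/(2\lfloor m\rfloor+1)$, is exactly the clean way to make the estimate rigorous, and your head estimate $\#U_p(\vec r)/\#\op{Box}(\vec r)\le(3/p)^{2d+2}$ for $p\le m$ is correct. In short: same idea as the paper, but with the tail handled carefully where the paper's writeup is terse.
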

\begin{proof}
    When $Z$ is fixed and $r_1,\ldots,r_{2d+2}\rightarrow\infty$, we have
    $$\# T_{>Z}(\vec{r})=O\left(\sum_{p>Z}\frac{r_1\cdots r_{2d+2}}{p^{2d+2}}\right)=O\left(\frac{r_1\cdots r_{2d+2}}{Z^{2d+1}}\right)$$
    where the implied constants depend only on $d$. This finishes the proof.
\end{proof}

\begin{proposition}\label{prop:box Schanuel and count Rd}
    \begin{align*}
    \lim_{r_1,\ldots,r_{2d+2}\rightarrow\infty}\frac{\#\mathbb{P}^{2d+1}(\Q,\vec{r})}{\#\op{Box}(\vec{r})}&=\frac{1}{2\zeta(2d+2)}.\\
    \lim_{r_1,\ldots,r_{2d+2}\rightarrow\infty}\frac{\#\cR^{(d)}(\vec{r})}{\#\mathbb{P}^{2d+1}(\Q,\vec{r})}&=1.
    \end{align*}
\end{proposition}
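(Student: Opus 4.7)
The plan is to combine a Möbius/inclusion-exclusion sieve, whose tail is controlled by Lemma \ref{lem:T_>Z}, with the Schwartz-Zippel bound of Lemma \ref{Schzipp lemma}; these two tools play the roles of Schanuel's theorem and Proposition \ref{count for Rd} in the box setting. Write $N(\vec{r}):=\#\{Q\in\op{Box}(\vec{r}):\ \gcd(Q)=1\}$ for the number of nonzero primitive vectors in $\op{Box}(\vec{r})$. Under the sign convention of Notation \ref{notation}, $\#\mathbb{P}^{2d+1}(\Q,\vec{r})=\tfrac{1}{2}N(\vec{r})$, so the first identity reduces to showing $N(\vec{r})/\#\op{Box}(\vec{r})\to 1/\zeta(2d+2)$.

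To prove this limit, fix $Z>0$ and partition the nonzero non-primitive vectors in $\op{Box}(\vec{r})$ into those whose $\gcd$ has a prime factor $\leq Z$ and those whose $\gcd$ uses only primes $>Z$; the second piece lies inside $T_{>Z}(\vec{r})$ by definition, while the first piece has density $1-\prod_{p\leq Z}(1-p^{-(2d+2)})$ by a finite inclusion-exclusion over primes $\leq Z$ combined with the elementary limit
\[
\lim_{\vec{r}\to\infty}\frac{\prod_i(2\lfloor r_i/m\rfloor+1)}{\prod_i(2r_i+1)}=\frac{1}{m^{2d+2}}
\]
for each squarefree $m$ supported on primes $\leq Z$. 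Subtracting these contributions (and the single zero vector) from $\#\op{Box}(\vec{r})$ pinches $\liminf$ and $\limsup$ of $N(\vec{r})/\#\op{Box}(\vec{r})$ into an interval of width at most $\limsup_{\vec{r}}\#T_{>Z}(\vec{r})/\#\op{Box}(\vec{r})$ around $\prod_{p\leq Z}(1-p^{-(2d+2)})$. Sending $Z\to\infty$, the Euler product converges to $1/\zeta(2d+2)$ and the error vanishes by Lemma \ref{lem:T_>Z}, giving the first identity.

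For the second identity, let $f\in\Z[x_0,\ldots,x_d,y_0,\ldots,y_d]$ be the generic resultant polynomial used in the proof of Proposition \ref{count for Rd}. Any point of $\mathbb{P}^{2d+1}(\Q,\vec{r})\setminus\cR^{(d)}(\vec{r})$ lifts to an integer vector $Q\in\op{Box}(\vec{r})$ with $f(Q)=0$, and applying Lemma \ref{Schzipp lemma} with $I_i=[-r_i,r_i]\cap\Z$ bounds the count of such $Q$ by $O_d\bigl(\#\op{Box}(\vec{r})\sum_i(2r_i+1)^{-1}\bigr)$, where the implied constant depends only on $f$, and hence only on $d$. Dividing by $\#\mathbb{P}^{2d+1}(\Q,\vec{r})\asymp\#\op{Box}(\vec{r})$, which is now known from the first identity, leaves an error of order $\sum_i(2r_i+1)^{-1}\to 0$ since each $r_i\to\infty$.

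The main technical point is the tail bound in the first identity: a direct Möbius computation would require controlling $\sum_{m>Z}\mu(m)\prod_i(2\lfloor r_i/m\rfloor+1)/\#\op{Box}(\vec{r})$ uniformly as $\vec{r}\to\infty$, which is delicate because the $r_i$ may have very different sizes along the limit. Lemma \ref{lem:T_>Z}, which reorganizes the tail by \emph{primes} dividing the $\gcd$ rather than by arbitrary divisors, sidesteps this difficulty cleanly.
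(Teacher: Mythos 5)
Your proof is correct and follows essentially the same route as the paper: both obtain the first identity by sieving the primitive vectors with a finite inclusion--exclusion over primes $\leq Z$, controlling the tail via Lemma~\ref{lem:T_>Z}, and sending $Z\to\infty$; and both deduce the second identity by bounding the locus $\mathfrak{r}=0$ in $\op{Box}(\vec{r})$ with Lemma~\ref{Schzipp lemma}. The only cosmetic difference is that the paper phrases the sieve in terms of the sets $S_{\leq Z}=\bigcap_{p\leq Z}S_p$ (to set up the machinery reused in Proposition~\ref{prop:symbol delta admissible}), whereas you directly partition the non-primitive vectors by whether the $\gcd$ has a small prime factor -- these are the same decomposition.
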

\begin{proof}
    As in the proof of Proposition~\ref{count for Rd}, the second identity follows from the first and Lemma~\ref{Schzipp lemma}. Therefore it remains to show the first identity.

    For each prime $p$, put $\bar{\mathcal{S}}_p=\mathbb{P}^{2d+1}(\Z/p\Z)$
    so that $\mathcal{S}_p=\mathbb{P}^{2d+1}(\Q)$. We have
    the affine lifting
    $$S_p=\{(C_1,\ldots,C_{2d+2})\in\Z^{2d+2}:\ p\nmid C_i\ \text{for some $i$}\}.$$
    Now $\mathcal{S}:=\bigcap_{p}\mathcal{S}_p=\mathbb{P}^{2d+1}(\Q)$ and its affine lifting is $S:=\bigcap_p S_p$. 

    Let $Z>0$ and put $S_{\leq Z}=\bigcap_{p\leq Z} S_p$. Since
    there are only finitely many congruence conditions in the definition
    of $S_{\leq Z}$ and since $S\subseteq S_{\leq Z}$, we have
    $$\limsup_{r_1,\ldots,r_{2d+2}\rightarrow\infty}\frac{\# S(\vec{r})}{\#\op{Box}(\vec{r})}\leq \lim_{r_1,\ldots,r_{2d+2}\rightarrow \infty}\frac{\# S_{\leq Z}(\vec{r})}{\#\op{Box}(\vec{r})}=\prod_{p\leq Z}\left(1-p^{-(2d+2)}\right).$$
    Let $Z\rightarrow\infty$, we have:
    \begin{equation}\label{eq:limsup leq}
    \limsup_{r_1,\ldots,r_{2d+2}\rightarrow\infty}\frac{\# S(\vec{r})}{\#\op{Box}(\vec{r})}\leq\frac{1}{\zeta(2d+2)}.
    \end{equation}

    Recall the set $T_{>Z}$ in Lemma~\ref{lem:T_>Z}. From the definitions of the involved sets, we have:
    $$S_{\leq Z}\setminus (S_{\leq Z}\cap T_{>Z})=S.$$
    Therefore $\# S(\vec{r})=\# S_{\leq Z}(\vec{r})-\# (S_{\leq Z}\cap T_{>Z})(\vec{r})$ and hence
    $$\liminf_{r_1,\ldots,r_{2d+2}\rightarrow\infty}\frac{\# S(\vec{r})}{\#\op{Box}(\vec{r})}=\prod_{p\leq Z}\left(1-p^{-(2d+2)}\right)-\limsup_{r_1,\ldots,r_{2d+2}\rightarrow\infty}\frac{\# (S_{\leq Z}\cap T_{>Z})(\vec{r})}{\#\op{Box}(\vec{r})}.$$
    Let $Z\rightarrow\infty$ and apply Lemma~\ref{lem:T_>Z}, we have
    \begin{equation}\label{eq:liminf=}
        \liminf_{r_1,\ldots,r_{2d+2}\rightarrow\infty}\frac{\# S(\vec{r})}{\#\op{Box}(\vec{r})}=\frac{1}{\zeta(2d+2)}.
    \end{equation}
    Combining \eqref{eq:S and calS}, \eqref{eq:limsup leq}, and \eqref{eq:liminf=}, we finish the proof of the first identity.
\end{proof}

\subsection{Some admissibility conditions}\label{subsec:admissibility}
Throughout this subsection, for each prime $p$ let $\mathcal{S}_p\subseteq \mathbb{P}^{2d+1}(\Q)$ be 
defined by a local condition, let $\mathcal{S}=\bigcap_p \mathcal{S}_p$, let $S_p$
be the affine lifting of $\mathcal{S}$, and let $S=\bigcap_p S_p$ be the affine lifting of $\mathcal{S}$. Let $\mathcal{S}_p'=\mathbb{P}^{2d+1}(\Q)\setminus \mathcal{S}_p$ and  $S_p'=\Z^{2d+2}\setminus S_p$. 
Warning: while $\mathcal{S}_p'$ is also defined by a local condition
at $p$ (since it is the preimage of $\mathbb{P}^{2d+1}(\Z/p^{n_p}\Z)\setminus\bar{\mathcal{S}}_p$), the 
set $S_p'$ is not the affine lifting of $\mathcal{S}_p'$. 
In fact, the affine lifting of $\mathcal{S}_p'$ is the set of points in $S_p'$ having at least one entry that is not divisible by $p$.
For $Z>0$, let
$\mathcal{S}_{>Z}'=\bigcup_{p>Z} \mathcal{S}_p'$ and 
$S_{>Z}'=\bigcup_{p>Z}S_p'$.

\begin{definition}\label{def:admissibility conditions}
    Let the symbol $\delta$ be either $\mathfrak{d}$, $\mu$, or $\mu_n$ where $n$ is an integer in $[1,2d+2]$. Note that $\delta^*$ denotes the corresponding affine version, $\underline{\delta}$ denotes the corresponding lower density, etc.
    We say that $\mathcal{S}$ is $\delta$-admissible if
    $$\lim_{Z\to\infty} \overline{\delta}(\mathcal{S}_{>Z}')=0.$$
    We say that $S$ is $\delta^*$-admissible if
    $$\lim_{Z\to\infty}\overline{\delta}^*(S_{>Z}')=0.$$
\end{definition}

\begin{proposition}\label{prop:symbol delta admissible}
    Let the symbol $\delta$ be either $\mathfrak{d}$, $\mu$, or $\mu_n$ where $n$ is an integer in $[1,2d+2]$. Suppose that either $\mathcal{S}$ is $\delta$-admissible or $S$ is $\delta^*$-admissible. Then we have
    \begin{equation}
        \delta(\mathcal{S})=\prod_p \pi(\mathcal{S}_p)\ \text{and}\ \delta^*(S)=\prod_p \pi(S_p).
    \end{equation}
\end{proposition}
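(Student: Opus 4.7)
The plan is to implement a standard inclusion-exclusion sieve on the affine side, and then transfer to the projective side using the elementary relations already set up in this section. First I would observe that the two admissibility hypotheses are essentially interchangeable: using \eqref{eq:S and calS} to relate $\#\mathcal{S}(\vec{r})$ and $\#S(\vec{r})$, Proposition~\ref{prop:box Schanuel and count Rd} to relate $\#\mathbb{P}^{2d+1}(\Q,\vec{r})$ and $\#\op{Box}(\vec{r})$ asymptotically, and Lemma~\ref{lem:T_>Z} to absorb the (small) discrepancy between $S_{>Z}'$ and the affine lifting of $\mathcal{S}_{>Z}'$ (which differ by points with non-trivial $\gcd$), the two admissibility conditions are equivalent, and so are the two identities to be proved. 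It therefore suffices to assume $S$ is $\delta^*$-admissible and prove the affine identity $\delta^*(S) = \prod_p \pi(S_p)$.

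Next I would run the finite-prime sieve. For each cutoff $Z > 0$, set $S_{\leq Z} := \bigcap_{p \leq Z} S_p$. By the Chinese Remainder Theorem applied to the modulus $N := \prod_{p \leq Z} p^{n_p}$, the set $S_{\leq Z}$ is a disjoint union of $\prod_{p \leq Z} \#\bar{S}_p$ residue classes modulo $N$. Standard lattice-point counting in the appropriate region (a box of fixed aspect ratio for $\mathfrak{d}^*$, a general box for $\mu^*$, and a box with distinguished $n$-th side length for $\mu_n^*$) then yields
\[
\delta^*(S_{\leq Z}) = \frac{\#\bar{S}_{\leq Z}}{N^{2d+2}} = \prod_{p \leq Z} \pi(S_p)
\]
in each of the three density regimes.

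I would then sandwich $S$ between $S_{\leq Z}$ and $S_{\leq Z} \setminus S_{>Z}'$. The inclusion $S \subseteq S_{\leq Z}$ gives the upper bound $\overline{\delta}^*(S) \leq \prod_{p \leq Z} \pi(S_p)$, which becomes $\overline{\delta}^*(S) \leq \prod_p \pi(S_p)$ upon letting $Z \to \infty$. For the lower bound, any point in $S_{\leq Z} \setminus S$ must violate the local condition $S_p$ for some prime $p > Z$, whence $S_{\leq Z} \setminus S \subseteq S_{>Z}'$ and
\[
\underline{\delta}^*(S) \;\geq\; \delta^*(S_{\leq Z}) \;-\; \overline{\delta}^*(S_{>Z}') \;=\; \prod_{p \leq Z} \pi(S_p) \;-\; \overline{\delta}^*(S_{>Z}').
\]
Sending $Z \to \infty$ and invoking the $\delta^*$-admissibility of $S$ extinguishes the error term and produces the matching lower bound. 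The projective identity $\delta(\mathcal{S}) = \prod_p \pi(\mathcal{S}_p)$ is then recovered by dividing numerator and denominator by $\#\op{Box}(\vec{r})$ and using \eqref{eq:pi vs pi}, which converts the factor $\zeta(2d+2)$ appearing in Proposition~\ref{prop:box Schanuel and count Rd} into the product $\prod_p (1-p^{-(2d+2)})^{-1}$.

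The main technical obstacle I foresee is the $\mu_n^*$ case, since its defining nested double limit is not literally the box density; one must verify that the lattice-count error in the finite sieve is uniform enough for the two limits to interchange with the sieving cutoff $Z$. The cleanest resolution is to note that for each fixed $Z$ and fixed $r_i$ ($i \neq n$) sufficiently large, the inner limit in $r_n$ already produces the exact value $\prod_{p \leq Z} \pi(S_p)$ (since $S_{\leq Z}$ is a union of translates of the lattice $N\Z^{2d+2}$, the density along any arithmetic progression in a single coordinate is the same), so the outer limit over the remaining $r_i$ trivializes and the $\mu_n^*$ case reduces structurally to the $\mu^*$ case.
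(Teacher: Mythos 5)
Your sieve on the affine side (set $S_{\le Z}=\bigcap_{p\le Z}S_p$, get the upper bound from $S\subseteq S_{\le Z}$, the lower bound from $S_{\le Z}\setminus S\subseteq S_{>Z}'$, and let $Z\to\infty$) is exactly what the paper does, and the translation to the projective identity via \eqref{eq:S and calS}, \eqref{eq:pi vs pi}, and Proposition~\ref{prop:box Schanuel and count Rd} is also right. However, the opening reduction—asserting that $\delta$-admissibility of $\mathcal{S}$ and $\delta^*$-admissibility of $S$ are equivalent, so that one can always assume the affine version—contains a genuine gap, and this is precisely the point the paper's proof is careful about.

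You claim $S_{>Z}'$ differs from the (union of the) affine liftings of the $\mathcal{S}_p'$ only by points with non-trivial $\gcd$, which Lemma~\ref{lem:T_>Z} absorbs. But Lemma~\ref{lem:T_>Z} only controls $T_{>Z}$, the set of points whose $\gcd$ is divisible by some prime $>Z$. A point $Q=g\,Q_0\in S_{>Z}'$ can have $\gcd(Q)=g>1$ with all prime factors of $g$ small (say $\le Z$), while still lying in some $S_p'$ for $p>Z$; such points are not in $T_{>Z}$, and for each primitive $Q_0\in\mathcal{S}_{>Z}'$ there are roughly $\min_i r_i/|Q_{0,i}|$ multiples $gQ_0$ inside $\op{Box}(\vec r)$. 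Turning $\overline{\delta}(\mathcal{S}_{>Z}')\to 0$ into $\overline{\delta}^*(S_{>Z}')\to 0$ therefore requires a nontrivial partial-summation argument over the multiplicities, which you do not supply (and which is awkward in the $\mu$ and $\mu_n$ regimes). The paper sidesteps the whole issue by bounding $\overline{\delta}^*(S_{\le Z}\cap S_{>Z}')$ rather than $\overline{\delta}^*(S_{>Z}')$: inside $S_{\le Z}$ the $\gcd$ is automatically coprime to every prime $\le Z$, so either $\gcd(Q)=1$ (in which case $Q,-Q$ come in pairs with exactly one point of each pair in $\mathcal{S}_{>Z}'$) or $\gcd(Q)$ has a prime factor $>Z$ (so $Q\in T_{>Z}$). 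This yields $\#(S_{\le Z}\cap S_{>Z}')(\vec r)\le 2\,\#\mathcal{S}_{>Z}'(\vec r)+\#T_{>Z}(\vec r)$ with no multiplicity issue. If you replace your equivalence claim with this observation—i.e.\ run the two admissibility hypotheses as separate cases of bounding $\overline{\delta}^*(S_{\le Z}\cap S_{>Z}')$—your proof becomes correct and coincides with the paper's.
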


\begin{remark}
    We interpret $\prod_p \pi(\mathcal{S}_p)$ as the limit of the sequence of partial products $\prod_{p\leq k}\pi(\mathcal{S}_p)$. This limit exists (with value in $[0,1]$) since $0\leq \pi(\mathcal{S}_p)\leq 1$ for every $p$. The same holds for 
    $\prod_p \pi(S_p)$. Our admissibility condition is analogous to the admissibility condition for Weierstrass models considered by Cremona and Sadek in \cite[Definition 4]{cremonasadek}.
\end{remark}

\begin{proof}
    Let $S_{\leq Z}=\bigcap_{p\leq Z} S_p$. Since there are only finitely many congruence conditions in the definition of $S_{\leq Z}$ and since $S\subseteq S_{\leq Z}$, we have:
    $$\overline{\delta}^*(S)\leq \overline{\delta}^*(S_{\leq Z})=\prod_{p\leq Z} \pi(S_p).$$
    Let $Z\rightarrow\infty$, we have:
    $$\overline{\delta}^*(S)\leq \prod_p \pi(S_p).$$

    From
    $$S_{\leq Z}\subseteq S\cup\left(\bigcup_{p>Z} S_p'\right)= S\cup S_{>Z}',$$
    we have
    $$S_{\leq Z}\setminus (S_{\leq Z}\cap S_{>Z}')=S$$
    and hence
    $$\# S(\vec{r})= \# S_{\leq Z}(\vec{r})-\# (S_{\leq Z}\cap S_{>Z}')(\vec{r}).$$
    Therefore
    \begin{equation}\label{eq:before 2 admissibility cases}
    \underline{\delta}^*(S)=\prod_{p\leq Z}\pi(S_p)-\overline{\delta}^*(S_{\leq Z}\cap S_{>Z}').
    \end{equation}
    
    First, consider the case when $S$ is $\delta^*$-admissible. From
    $$\delta^*(S_{\leq Z}\cap S_{>Z}')\leq \delta^*(S_{>Z}')$$
    and the $\delta^*$-admissibility condition, we have
    $$\lim_{Z\rightarrow\infty}\overline{\delta}^*(S_{\leq Z}\cap S_{>Z}')=0.$$

    Now we consider the case that $\mathcal{S}$ is $\delta$-admissible. The set
    $S_{\leq Z}\cap S_{>Z}'$ consists of two types of points:
    \begin{itemize}
        \item [(1)] The first consists of $Q=(C_1,\ldots,C_{2d+2})$ such that
        $\gcd(C_1,\ldots,C_{2d+2})=1$. These points come in pairs (namely $Q$ and $-Q$) and, according to Notation~\ref{notation} and the definitions of the involving sets, exactly one point in each pair belongs to
        $\mathcal{S}_{>Z}'$.

        \item [(2)] The second consists of $Q=(C_1,\ldots,C_{2d+2})$ such that
        $p\mid \gcd (C_1,\ldots,C_{2d+2})$ for some prime $p>Z$. We now recall the set
        $T_{>Z}$ in Lemma~\ref{lem:T_>Z}.
    \end{itemize}
    This gives
    $$\# (S_{\leq Z}\cap S_{>Z}')(\vec{r})\leq 2\#\mathcal{S}_{>Z}'(\vec{r})+\# T_{>Z}(\vec{r}).$$
    Lemma~\ref{lem:T_>Z} and the $\delta$-admissibility of $\mathcal{S}$ imply
    $$\lim_{Z\rightarrow\infty}\overline{\delta}^*(S_{\leq Z}\cap S_{>Z}')=0.$$

    Therefore in either case, we may let $Z\rightarrow\infty$ in \eqref{eq:before 2 admissibility cases} to have that
    $$\underline{\delta}^*(S)= \prod_{p}\pi(S_p).$$
    Combining this with the earlier upper bound for $\overline{\delta}^*(S)$, we have
    $$\delta^*(S)=\prod_p \pi(S_p).$$
    Finally, combining this with \eqref{eq:pi vs pi}, \eqref{eq:S and calS}, and Proposition~\ref{prop:box Schanuel and count Rd}, we obtain
    $$\delta(\mathcal{S})=\prod_p \pi(\mathcal{S}_p).$$
\end{proof}

\section{Minimal resultants}\label{s 4}

\par In this section, fix an integer $d\geq 2$ and our goal is to prove that a positive portion of
$\cR^{(d)}$ has globally minimal resultants. In fact, we consider a certain subset of $\cR^{(d)}$ that includes globally minimal rational maps that have good reduction at a given finite set of primes and prove that this subset has positive height density (for odd $d$) as well as positive weak box density (for any $d$). In section \ref{s 5}, we obtain an explicit lower bound for the weak box density in the case $d=2$. 

\subsection{The minimal resultant}
For a rational map $\phi(z)\in\Q(z)$ of degree $d$, we express $\phi(X/Y) = \frac{F(X, Y)}{G(X, Y)}$ with \[F(X,Y) = \sum_{i=0}^d A_i X^i Y^{d-i}\text{ and }
    G(X,Y) = \sum_{j=0}^d B_j X^j Y^{d-j},\]
where $(A_0, \dots, A_d, B_0, \dots, B_d)\in \cR^{(d)}\subseteq\mathbb{P}^{2d+1}(\Q)\subseteq \Z^{2d+2}$ according to Notation~\ref{notation}. We recall that this means $\gcd(A_0,\ldots,B_d)=1$
and the first non-zero coordinate is positive. Then we define $\op{Res}(\phi)$ to be $\op{Res}(F,G)$.

Let $p$ be a prime number and set $\widetilde{F}(X, Y)$ and $\widetilde{G}(X, Y)$ to denote the mod-$p$ reductions of $F(X, Y)$ and $G(X, Y)$ respectively, and set $\tilde{\phi}(X/Y):=\frac{\widetilde{F}(X, Y)}{\widetilde{G}(X, Y)}$. As is well known \cite[Theorem 2.15]{silvermantextbook}, the following conditions are equivalent
\begin{enumerate}
    \item $\op{deg}(\widetilde{\phi})=\op{deg}(\phi)$.
    \item The equations $\widetilde{F}(X, Y)=\widetilde{G}(X, Y)=0$ have no common solutions in $\mathbb{P}^1(\bar{\F}_p)$. 
    \item The resultant $\op{Res}(\phi)$ is not divisible by $p$.  
\end{enumerate}
We say that $\phi$ has good reduction at $p$ if one of the above conditions holds.

Let $\ord_p$ denote the usual $p$-adic valuation on $\Q$. The following definition
is taken from \cite[Chapter~4]{silvermantextbook}.
\begin{definition}
    \begin{itemize}
        \item Given $A\in \op{PGL}_2(\Q)$, set $\phi^A:=A^{-1}\circ \phi \circ A$. We set $\epsilon_p(\phi)$ to be the minimum value of $\op{ord}_p(\op{Res}(\phi^A))$, where $A$ ranges over $\op{PGL}_2(\Q)$.
        \item The \emph{minimal resultant} is defined to be the following product \[\mathfrak{R}_\phi:=\prod_p p^{\epsilon_p(\phi)}.\]
        \item The rational map $\phi$ is called \emph{minimal at $p$} if 
        $\op{ord}_p\left(\op{Res}(\phi)\right)=p^{\epsilon_p(\phi)}$.
        \item We say that $\phi$ is \emph{globally minimal} or $\phi$ has globally minimal resultant if it is minimal at every prime, equivalently $\mathfrak{R}_\phi=|\op{Res}(\phi)|$.
    \end{itemize}  
    \end{definition}

\begin{lemma}\label{d-1 lemma}
 The following assertions hold:
 \begin{enumerate}
     \item Suppose that for all primes $p$, $\op{ord}_p \left(\op{Res}(\phi)\right)\leq d-1$, then $\phi$ is minimal.
     \item Suppose that $d$ is odd and for all primes $p$, $\op{ord}_p \left(\op{Res}(\phi)\right)\leq 2d-1$, then $\phi$ is minimal.
 \end{enumerate}
   \end{lemma}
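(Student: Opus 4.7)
The plan is to prove the lemma locally at each prime $p$: show that under the stated inequality on $\ord_p(\op{Res}(\phi))$, no rational conjugation can strictly decrease this valuation, so $\epsilon_p(\phi) = \ord_p(\op{Res}(\phi))$ and $\phi$ is minimal at $p$. Since $\op{PGL}_2(\Q) \subseteq \op{PGL}_2(\Qp)$, it suffices to prove the stronger $p$-adic statement: no $A \in \op{PGL}_2(\Qp)$ strictly decreases $\ord_p(\op{Res}(\phi^A))$, where $\op{Res}(\phi^A)$ is computed from a $p$-primitive model with coefficients in $\Zp$.

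My first move is to reduce to conjugation by a one-parameter diagonal family. By the Cartan (Smith normal form) decomposition, every $A \in \op{PGL}_2(\Qp)$ factors as $A = U D_a V$ with $U,V \in \op{PGL}_2(\Zp)$ and $D_a = \op{diag}(p^a,1)$ for some integer $a \geq 0$. Conjugation by any element of $\op{PGL}_2(\Zp)$ transforms the coefficient vector by an invertible $\Zp$-linear map, hence preserves both $p$-primitivity and $\ord_p(\op{Res})$. It is therefore enough to show that $D_a$-conjugation of any $p$-primitive integral map $\phi$ cannot decrease $\ord_p(\op{Res})$ under the relevant hypothesis.

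The core computation is direct. Writing $\phi = [F:G]$ in $p$-primitive form, the map $\phi^{D_a}$ corresponds to $(F_1, G_1)$ with coefficients $A_i p^{a(i-1)}$ and $B_j p^{aj}$, and the standard identities $\op{Res}(F(\lambda X,Y), G(\lambda X,Y)) = \lambda^{d^2}\op{Res}(F,G)$ together with $\op{Res}(cF, G) = c^d\op{Res}(F,G)$ give $\ord_p(\op{Res}(F_1,G_1)) = \ord_p(\op{Res}(F,G)) + ad(d-1)$. Primitivizing requires dividing through by $p^\mu$, where $\mu$ is the minimum $p$-adic valuation of the combined coefficients of $F_1$ and $G_1$; this contributes an additional $-2d\mu$. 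So the net change in $\ord_p(\op{Res})$ under $D_a$-conjugation is $ad(d-1) - 2d\mu$, which is a strict decrease precisely when $\mu > a(d-1)/2$.

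Since $\ord_p(\op{Res})$ of a $p$-primitive integral model is always nonnegative, the lemma follows from a parity analysis of the minimum achievable decrease. When $d$ is odd, $d-1$ is even, so $a(d-1)/2 \in \Z$; then $\mu \geq a(d-1)/2+1$ forces any decrease to be at least $2d(a(d-1)/2+1) - ad(d-1) = 2d$. For general $d$ (the extremal case being $d$ even with $a$ odd) the smallest possible decrease is $d$. Consequently a decrease of size $\geq d$ is impossible when $\ord_p(\op{Res}(\phi)) \leq d-1$, giving (1); and in the odd-$d$ case a decrease of size $\geq 2d$ is impossible when $\ord_p(\op{Res}(\phi)) \leq 2d-1$, giving (2). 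The main obstacle is the bookkeeping in step~3—tracking scaling factors cleanly through the primitivization—and verifying the Cartan reduction in step~1; both steps are routine but need care to avoid off-by-one errors in the exponents.
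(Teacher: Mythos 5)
Your proof is correct and is a genuinely different route from the paper's, which simply cites Lemma~3.1 of Bruin--Molnar \cite{BM2012} without giving an argument. What you have effectively done is reprove that cited lemma from scratch. Your structure is sound: localizing at each $p$ and enlarging the conjugation group from $\op{PGL}_2(\Q)$ to $\op{PGL}_2(\Qp)$ is a valid strengthening (it only makes the set of conjugations larger); the Cartan (Smith normal form) reduction to $D_a = \op{diag}(p^a,1)$ with $a\geq 0$ is legitimate because $\op{PGL}_2(\Zp)$-conjugation acts by an invertible $\Zp$-linear change of variables on the coefficient vector $(A_0,\ldots,B_d)$, hence preserves $p$-primitivity and $\ord_p(\op{Res})$ (the scaling factor $(\det U)^{d^2+d}$ has zero valuation); and the explicit formula $\ord_p(\op{Res}(\phi^{D_a})) = \ord_p(\op{Res}(\phi)) + ad(d-1) - 2d\mu$ is right, using the isobaricity (weight $d^2$) of the resultant for the substitution $X\mapsto p^aX$ together with multihomogeneity of degree $d$ in each set of coefficients. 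Your parity analysis is also correct: for a strict decrease one needs the integer $\mu > a(d-1)/2$; when $d$ is odd, $a(d-1)$ is always even, forcing $\mu \geq a(d-1)/2+1$ and hence a decrease of at least $2d$; in general (the extremal case $d$ even, $a$ odd) one gets $\mu \geq (a(d-1)+1)/2$ and a decrease of at least $d$. Combined with the nonnegativity of $\ord_p$ of a $p$-primitive resultant, both parts follow. The upshot: the paper outsources the proof to a reference; you supply a self-contained argument that buys transparency and makes the appearance of the thresholds $d-1$ and $2d-1$ (and the role of the parity of $d$) completely explicit.
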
 
   \begin{proof}
      The result follows immediately from \cite[Proposition 4.95]{silvermantextbook}.
   \end{proof}

We now introduce the relevant subsets of $\cR^{(d)}$ defined by local conditions. Let $\cN_p^{(k)}$ consist of $\phi\in\cR^{(d)}$ such that $p^k\nmid \op{Res}(\phi)$ so that $\cG_p:=\cN_p^{(1)}$ is the set of rational maps with good reduction at $p$.
\begin{definition}
    Let $\Sigma$ be a finite (possibly empty) set of prime numbers. With respect to notation above, set \[\begin{split}&\mathcal{V}_\Sigma:=\left(\bigcap_{p\in \Sigma} \cG_p\right)\cap \left(\bigcap_{p\notin \Sigma} \cN_p^{(d)}\right); \\ 
& \mathcal{W}_\Sigma:=\left(\bigcap_{p\in \Sigma} \cG_p\right)\cap \left(\bigcap_{p\notin \Sigma} \cN_p^{(2d)}\right).
    \end{split}\]
    In other words, $\V_\Sigma$ (resp. $\cW_\Sigma$) consists of rational maps $\phi\in\cR^{(d)}$ with $d$-power free (resp. $2d$-power free) resultant and good reduction at all primes $p\in \Sigma$. 
\end{definition}
\begin{remark}\label{rem:V and W have globally minimal maps}
   Let $\Sigma$ be a finite set of primes. It follows from Lemma \ref{d-1 lemma} that all $\phi\in \V_\Sigma$ are globally minimal. Moreover, if $d$ is odd, then all $\phi\in \cW_{\Sigma}$ are globally minimal.
\end{remark}
\subsection{Equations over finite fields}
\par In this subsection, we shall prove bounds for $\pi(\cG_p)$. For this purpose we recall the following well-known results about the number of solutions to equations over finite fields. Throughout, $q$ will be a power of a prime $p$ and $\F_q$ shall denote the field with $q$ elements. 

\begin{theorem}[Warning's theorem]\label{warning}
    Let $h\in \F_q[x_1, \dots, x_n]$ be a polynomial with degree $D<n$. Then the number of solutions $\alpha=(\alpha_1, \dots, \alpha_n)\in \F_q^n$ to $h(\alpha)=0$ is divisible by $p$. 
\end{theorem}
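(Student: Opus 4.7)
The plan is to use the classical character-sum (or rather, Fermat's little theorem) approach: count zeros by writing an indicator function as a polynomial, and then exploit a degree obstruction to force each monomial sum to vanish modulo $p$.

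First I would observe that, by Fermat's little theorem in $\F_q^\times$, the quantity $1-h(\alpha)^{q-1}$ equals $1$ when $h(\alpha)=0$ and equals $0$ otherwise. Hence, letting $N$ denote the number of zeros of $h$ in $\F_q^n$, we have
\[
N \equiv \sum_{\alpha\in\F_q^n}\bigl(1-h(\alpha)^{q-1}\bigr) = q^n - \sum_{\alpha\in\F_q^n} h(\alpha)^{q-1} \pmod{p}.
\]
Since $q^n\equiv 0 \pmod p$, it suffices to show that $\displaystyle S:=\sum_{\alpha\in\F_q^n} h(\alpha)^{q-1}\equiv 0\pmod p$.

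Next I would expand $h^{q-1}$ as a sum of monomials $c_{\vec i}\, x_1^{i_1}\cdots x_n^{i_n}$ and interchange summation, so that
\[
S=\sum_{\vec i} c_{\vec i}\prod_{j=1}^n \Bigl(\sum_{\alpha_j\in\F_q}\alpha_j^{i_j}\Bigr).
\]
The key elementary lemma is that $\sum_{\alpha\in\F_q}\alpha^{i}$ equals $-1$ in $\F_p$ when $i$ is a positive multiple of $q-1$, and equals $0$ in $\F_p$ otherwise (the case $i=0$ gives $q\equiv 0$, and for $i>0$ one uses that the multiplicative group $\F_q^\times$ is cyclic of order $q-1$, so a non-trivial character sum vanishes).

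Finally I would combine this with a degree count. A monomial contributes nontrivially to $S$ only if, for every $j$, the exponent $i_j$ is a positive multiple of $q-1$, i.e.\ $i_j\geq q-1$. Thus any contributing monomial has total degree $\sum_j i_j\geq n(q-1)$. But $\deg(h^{q-1})\leq D(q-1)<n(q-1)$ by hypothesis, so no such monomial appears in the expansion. Therefore $S\equiv 0\pmod p$ and hence $p\mid N$. The only genuinely delicate ingredient is the finite-field power-sum identity; once that is in place the whole argument is a one-line degree comparison, so I do not anticipate a substantial obstacle.
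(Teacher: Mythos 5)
Your proof is correct; it is the standard Chevalley–Warning argument (indicator function $1-h^{q-1}$, power-sum vanishing over $\F_q$, and the degree comparison $D(q-1)<n(q-1)$), which is precisely the proof in the textbook the paper cites for this result. The paper itself offers no proof, just the reference to Lidl–Niederreiter, so there is no divergence to compare; the one minor point worth flagging is that the congruences $N\equiv\cdots\pmod p$ should be read as equalities of elements of $\F_q$ (with $N$ meaning $N\cdot 1_{\F_q}$, which lies in the prime subfield), but this is a standard and harmless abuse.
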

\begin{proof}
    See \cite[Theorem 6.5]{lidlnied}.
\end{proof}

\begin{theorem}\label{lower bound for h theorem}
    Let $h$ be as in Theorem \ref{warning} and let $N$ be the number of solutions $\alpha=(\alpha_1, \dots, \alpha_n)\in \F_q^n$ to $h=0$. Assume that $N\geq 1$. Then, we have that $N\geq q^{n-D}$. 
\end{theorem}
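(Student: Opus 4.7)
Plan:

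This is Warning's second theorem, and I would prove it by contradiction using the classical indicator-polynomial technique. Let $V := \{\alpha \in \F_q^n : h(\alpha) = 0\}$ with $N = |V|$, and assume $1 \leq N < q^{n-D}$.

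The starting point is the polynomial $\psi(x) := 1 - h(x)^{q-1}$, which by Fermat's little theorem takes value $1$ on $V$ and $0$ elsewhere on $\F_q^n$, and has total degree $D(q-1)$. For any auxiliary polynomial $g \in \F_q[x_1, \ldots, x_n]$ one therefore has
\[
\sum_{\alpha \in V} g(\alpha) \;=\; \sum_{x \in \F_q^n} g(x)\,\psi(x).
\]
My first step is to show that the right-hand side vanishes in $\F_p$ whenever $\deg g < (n-D)(q-1)$. The product $g\psi$ has total degree below $n(q-1)$, so every monomial $x_1^{f_1}\cdots x_n^{f_n}$ appearing in its expansion satisfies $\sum_i f_i < n(q-1)$; in particular at least one $f_i$ is strictly below $q-1$. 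The elementary identity
\[
\sum_{a \in \F_q} a^k \;\equiv\; \begin{cases} -1 \pmod p, & k > 0 \text{ and } (q-1) \mid k, \\ 0 \pmod p, & \text{otherwise,} \end{cases}
\]
then forces the corresponding factor $\sum_{x_i \in \F_q} x_i^{f_i}$ to be zero, killing the factored monomial sum $\prod_i \sum_{x_i \in \F_q} x_i^{f_i}$.

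The decisive second step is to exhibit a specific polynomial $g_0$ of total degree strictly less than $(n-D)(q-1)$ for which $\sum_{\alpha \in V} g_0(\alpha) \neq 0$ in $\F_p$, contradicting the first step. After an affine translation sending a chosen $\alpha_0 \in V$ to the origin, the hypothesis $N < q^{n-D}$ is exploited in a Lagrange-interpolation style argument: I would try to choose $g_0$ to vanish on $V \setminus \{\alpha_0\}$ while being nonzero at $\alpha_0$, so that $\sum_{\alpha \in V} g_0(\alpha) = g_0(\alpha_0) \neq 0$. The underlying heuristic is that $V$ is "too small" (strictly fewer than $q^{n-D}$ points) to absorb all low-degree test sums simultaneously.

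The main obstacle is executing this second step rigorously, specifically guaranteeing that the interpolating polynomial $g_0$ can be chosen with total degree strictly below $(n-D)(q-1)$. The required dimension comparison between the space of low-degree polynomials on $\F_q^n$ modulo the relations $x_i^q = x_i$ and the evaluation map to $\F_q^V$ is delicate — a naïve count of degree-bounded monomials may fall short of $q^{n-D}$ in small examples — so the careful construction of $g_0$, perhaps through an explicit Lagrange-type formula tailored to $V$, is the technical heart of the proof.
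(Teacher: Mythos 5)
The paper itself does not prove this statement: it cites \cite[Theorem~6.11]{lidlnied}, which is Warning's second theorem. So the comparison is really between your outline and the classical proof, which proceeds very differently (roughly: one shows, by applying Warning's first theorem to $(D+1)$-dimensional affine slices, that the number of zeros of $h$ is constant mod $p$ across parallel $D$-dimensional affine subspaces, and then deduces that every coset of a suitable $D$-dimensional subspace through a known zero meets $V$, giving $q^{n-D}$ cosets each contributing at least one zero).

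Your Step~1 is correct and is the standard Ax-style computation: if $\deg g < (n-D)(q-1)$ then $\deg(g\psi) < n(q-1)$, every monomial has some exponent below $q-1$, and the factored power-sum identity kills each monomial, so $\sum_{\alpha\in V} g(\alpha) = 0$. The problem is Step~2, and it is a genuine gap, not merely a detail. You correctly sense the difficulty but the specific repair you suggest cannot work: the Lagrange-type polynomial $g_0(x)=\prod_{\beta\in V\setminus\{\alpha_0\}} \frac{x_{i(\beta)}-\beta_{i(\beta)}}{\alpha_{0,i(\beta)}-\beta_{i(\beta)}}$ has total degree $|V|-1$, and the hypothesis only gives $|V|-1 \le q^{n-D}-2$. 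Since $q^{n-D}-1 = (q-1)(q^{n-D-1}+\cdots+1) \ge (n-D)(q-1)$ with equality only when $n-D=1$, this polynomial has degree $\ge (n-D)(q-1)$ as soon as $n-D\ge 2$, so it lies outside the range in which Step~1 applies. Thus the interpolant you construct does not produce a contradiction.

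More fundamentally, the assertion you need in Step~2 — that any nonempty $V\subseteq\F_q^n$ with $|V|<q^{n-D}$ admits a polynomial $g$ of degree $<(n-D)(q-1)$ with $\sum_{\alpha\in V}g(\alpha)\ne 0$ — is equivalent to the statement that the dual of the generalized Reed--Muller code $\mathrm{GRM}_q\bigl((n-D)(q-1)-1,\,n\bigr)$, namely $\mathrm{GRM}_q(D(q-1),n)$, has minimum distance $q^{n-D}$. This is a real theorem (a $q$-ary analogue of the Reed--Muller minimum-distance formula, related to the Delsarte--Goethals--MacWilliams description of minimum-weight words), and it does not follow from a dimension count or a naive interpolation bound. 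Without an independent proof of this fact, the argument is circular in effect: Step~2 is at least as hard as the theorem being proved. To salvage the approach you would either need to prove that minimum-distance statement directly, or switch to the affine-subspace counting argument that the cited reference uses.
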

\begin{proof}
    See \cite[Theorem 6.11]{lidlnied}.
\end{proof}

\begin{theorem}\label{d q n-1}
    Let $h\in \F_q[x_1, \dots, x_n]$ be a non-zero polynomial with degree $D$. Then the number of solutions to $h=0$ is at most $D q^{n-1}$. 
\end{theorem}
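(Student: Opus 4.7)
The plan is to prove this by induction on the number of variables $n$, with the base case being the standard fact that a nonzero univariate polynomial of degree $D$ over a field has at most $D$ roots (so at most $D = Dq^{0} = Dq^{n-1}$ solutions).

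For the inductive step, I would write $h$ as a polynomial in the single variable $x_n$ with coefficients in $\F_q[x_1, \dots, x_{n-1}]$, say
\[
h(x_1, \dots, x_n) = \sum_{i=0}^{k} h_i(x_1, \dots, x_{n-1})\, x_n^{i},
\]
where $h_k \neq 0$ and $k \leq D$. Note that $\deg h_k \leq D - k$. Now partition the count of zeros of $h$ in $\F_q^n$ according to whether the leading coefficient $h_k$ vanishes at $(a_1, \dots, a_{n-1})$. If $h_k(a_1, \dots, a_{n-1}) \neq 0$, then $h(a_1, \dots, a_{n-1}, x_n)$ is a nonzero univariate polynomial in $x_n$ of degree exactly $k$, contributing at most $k$ values of $x_n$; summing over the at most $q^{n-1}$ possibilities for $(a_1, \dots, a_{n-1})$ gives at most $k\, q^{n-1}$ solutions. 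If instead $h_k(a_1, \dots, a_{n-1}) = 0$, then for each such tuple at most $q$ values of $x_n$ can solve $h=0$; by the inductive hypothesis applied to the nonzero polynomial $h_k$ of degree $\leq D-k$ in $n-1$ variables, there are at most $(D-k)\, q^{n-2}$ such tuples, contributing at most $(D-k)\, q^{n-1}$ solutions.

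Adding the two contributions yields
\[
k\, q^{n-1} + (D-k)\, q^{n-1} = D\, q^{n-1},
\]
which completes the induction. There is no real obstacle here; the only small care needed is to handle the degenerate case $k = D$, in which $h_k$ is a nonzero constant and the ``leading coefficient vanishes'' case does not arise, making the bound $k\, q^{n-1} = D\, q^{n-1}$ immediate. Conversely, if $k = 0$ then $h = h_0$ depends only on $x_1, \dots, x_{n-1}$ and the inductive hypothesis applied directly gives at most $D\, q^{n-2} \cdot q = D\, q^{n-1}$ solutions when accounting for the free variable $x_n$.
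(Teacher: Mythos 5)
The paper does not prove this statement at all; it simply cites \cite[Theorem 6.13]{lidlnied}, so there is no paper proof to compare against step by step. Your argument is a complete, correct, and self-contained inductive proof (and it is essentially the standard textbook proof that the cited reference itself uses). The induction on $n$, the split of tuples $(a_1,\dots,a_{n-1})$ according to whether the leading coefficient $h_k$ vanishes, and the two bounds $k\,q^{n-1}$ and $(D-k)\,q^{n-1}$ are all sound; the edge cases $k=0$ and $k=D$ are handled adequately, and the base case $n=1$ is the usual fact about univariate polynomials over a field. One very small point you could make explicit: in the case $h_k(a)=0$ you need $n\geq 2$ so that the inductive hypothesis applies to $h_k$ in $n-1\geq 1$ variables, but this is automatic since $n=1$ is the base case. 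In short, where the paper outsources the result to the literature, you give a clean elementary proof, which is a reasonable thing to do but does not represent a divergence from a proof strategy the paper itself employs.
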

\begin{proof}
 See \cite[Theorem 6.13]{lidlnied}. 
\end{proof}

We will apply the above theorems to the generic resultant of two polynomials of degree $d$.
\begin{definition}\label{def:generic resultant}
    Consider two generic homogeneous polynomials of degree $d$:
    $$F(X,Y)=\sum_{i=0}^d x_iX^iY^{d-i}\ \text{and}\ G(X,Y)=\sum_{j=0}^{d} y_jX^jY^{d-j}.$$
    Let $\mathfrak{r}(x_0,\ldots,x_d,y_0,\ldots,y_d)\in\Z[x_0,\ldots,y_d]$ be the resultant of $F$ and $G$. This polynomial is homogeneous of degree $2d$ and has degree $d$ in each of the variables $x_i$'s and $y_j$'s.
\end{definition}

The set $\cG_p$ is defined by a local condition at $p$ since it is the preimage
of
$$\bar{\cG}_p:=\{P\in\mathbb{P}^{2d+1}(\Z/p\Z):\ \mathfrak{r}(P)\neq 0\}$$
under the reduction map $\mathbb{P}^{2d+1}(\Q)\rightarrow\mathbb{P}^{2d+1}(\Z/p\Z)$. The affine lifting of $\bar{\cG}_p$ is
$$\bar{G}_p:=\{Q\in(\Z/p\Z)^{2d+2}:\ \mathfrak{r}(Q)\neq 0\}.$$
The preimage $G_p$ of $\bar{G}_p$ under the reduction map
$\Z^{2d+2}\rightarrow (\Z/p\Z)^{2d+2}$ is the affine lifting of $\cG_p$. 
The complement $\bar{G}_p':=(\Z/p\Z)^{2d+2}\setminus\bar{G}_p$ is the zero set of
$\mathfrak{r}$ in $(\Z/p\Z)^{2d+2}$. By \eqref{eq:pi vs pi}, we have
\begin{equation}\label{eq:pi(cG_p)}
    \pi(\cG_p)=\frac{\#\bar{G}_p}{p^{2d+2}-1}.
\end{equation}

\begin{lemma}\label{2d p to the 2d+1}
     We have:
     \begin{enumerate}
         \item $\#\bar{G}_p$ and $\#\bar{G}_p'$ are divisible by $p$,
         \item $\#\bar{G}_p'\geq p^2$,
         \item $\#\bar{G}_p'\leq 2d p^{2d+1}$, or equivalently, $\#\bar{G}_p\geq p^{(2d+2)}-2d p^{2d+1}$, 
         \item $\# \bar{G}_p\geq p^2$. 
     \end{enumerate}
 \end{lemma}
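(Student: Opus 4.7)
The plan is to analyze the zero set of the generic resultant $\mathfrak{r}$ from Definition~\ref{def:generic resultant}, a polynomial of total degree $D = 2d$ in $n = 2d+2$ variables. A preliminary observation is that $\mathfrak{r}$ is not identically zero modulo any prime $p$: specializing $F = X^d$ and $G = Y^d$ (i.e.\ $x_d = y_0 = 1$ and all other coordinates $0$) yields $\op{Res}(X^d, Y^d) = 1$, so this particular point always lies in $\bar{G}_p$, for every prime $p$.

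The first three assertions are direct applications of the results recalled above. For (1), since $D = 2d < 2d+2 = n$, Warning's theorem (Theorem~\ref{warning}) applied to $\mathfrak{r}$ gives $p \mid \#\bar{G}_p'$, whence $p \mid \#\bar{G}_p = p^{2d+2} - \#\bar{G}_p'$ as well. For (2), the origin $(0,\ldots,0)$ is a zero of $\mathfrak{r}$, so $\bar{G}_p'$ is nonempty; Theorem~\ref{lower bound for h theorem} then gives $\#\bar{G}_p' \geq p^{n-D} = p^2$. For (3), Theorem~\ref{d q n-1} applied to $\mathfrak{r}$ immediately yields $\#\bar{G}_p' \leq D \cdot p^{n-1} = 2d \cdot p^{2d+1}$.

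The main obstacle is part (4). A naive attempt based on (3) only gives $\#\bar{G}_p \geq p^{2d+1}(p-2d)$, which is useless when $p \leq 2d$. To obtain a uniform lower bound valid for all primes, I would invoke a Rabinowitsch-style auxiliary variable. Introduce a new variable $z$ and consider the polynomial
\[
h(z, x_0, \ldots, y_d) := 1 - z \cdot \mathfrak{r}(x_0, \ldots, y_d) \in \F_p[z, x_0, \ldots, y_d],
\]
of total degree $2d+1$ in $2d+3$ variables. The zero set of $h$ in $\F_p^{2d+3}$ is in bijection with $\bar{G}_p$ via the map $(z, Q) \mapsto Q$: indeed $h(z, Q) = 0$ forces $\mathfrak{r}(Q) \neq 0$ and uniquely determines $z = \mathfrak{r}(Q)^{-1}$. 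Since $\bar{G}_p$ is nonempty by the opening paragraph, the hypothesis of Theorem~\ref{lower bound for h theorem} is met, and we conclude
\[
\#\bar{G}_p \;=\; \#\{h = 0\} \;\geq\; p^{(2d+3) - (2d+1)} \;=\; p^2,
\]
which finishes (4).
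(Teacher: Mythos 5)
Your parts (1), (2), and (3) are exactly the paper's argument. For part (4), you take a genuinely different route. The paper notices that $\mathfrak{r}(1,0,\ldots,0,1)=:c$ is a nonzero element of $\F_p$ (the resultant of $Y^d$ and $X^d$), and then applies Theorem~\ref{lower bound for h theorem} directly to the polynomial $\mathfrak{r}-c$, which still has degree $2d$ in $2d+2$ variables; since every zero of $\mathfrak{r}-c$ satisfies $\mathfrak{r}\neq 0$, this already gives $\#\bar{G}_p\geq p^{(2d+2)-2d}=p^2$. You instead use a Rabinowitsch auxiliary variable, passing to $h=1-z\mathfrak{r}$ of degree $2d+1$ in $2d+3$ variables and applying the same theorem, and the bookkeeping $p^{(2d+3)-(2d+1)}=p^2$ produces the same bound. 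Both arguments hinge on the same two ingredients---the nonvanishing of $\mathfrak{r}$ at the point corresponding to $(X^d,Y^d)$ to verify nonemptiness, and the lower-bound theorem on solutions of low-degree equations over $\F_q$---so the difference is cosmetic; the paper's version is slightly more economical because it avoids the extra variable and applies the theorem in one fewer dimension, whereas yours illustrates that the Rabinowitsch trick converts the nonvanishing set itself into a variety to which the theorem applies, a reusable idea when no convenient constant $c$ presents itself.
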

\begin{proof}
     We note that $\#\bar{G}_p'=p^{2d+2}-\#\bar{G}_p$. Then part (1) follows from Theorem \ref{warning}. For part (2), the lower bound $\#\bar{G}_p'\geq p^{2d+2-2d}=p^2$ follows from Theorem \ref{lower bound for h theorem}. Next, part (3) follows from Theorem \ref{d q n-1}. Finally for part (4), since $Y^d$ and $X^d$ have no common root in $\mathbb{F}_p$, we have that
     $\mathfrak{r}(1,0,\ldots,0,1)=:c$ is a non-zero element of $\mathbb{F}_p$. Then we apply  Theorem~\ref{lower bound for h theorem} for the polynomial $\mathfrak{r}-c$. 
\end{proof}

\begin{corollary}\label{cor:pi Gp lemma}
    The following assertions hold
    \begin{enumerate}
        \item $\displaystyle\pi(\cG_p)\geq \frac{1}{p^{2d}}$ for all primes $p$.
        \item For $p>2d$, we have that $\displaystyle\pi(\cG_p)\geq 1-\frac{2d}{p}$.
    \end{enumerate}
\end{corollary}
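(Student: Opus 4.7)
The plan is essentially immediate: both inequalities follow by combining the counting bounds on $\#\bar{G}_p$ assembled in Lemma \ref{2d p to the 2d+1} with the identity
$$\pi(\cG_p)=\frac{\#\bar{G}_p}{p^{2d+2}-1}$$
recorded in \eqref{eq:pi(cG_p)}. In other words, the real work has already been done in the lemma (via Warning's theorem and the degree bound Theorem~\ref{d q n-1}), and the corollary is just a matter of dividing through and using the trivial bound $p^{2d+2}-1<p^{2d+2}$.

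For part (1), I would invoke part (4) of Lemma \ref{2d p to the 2d+1}, namely $\#\bar{G}_p\geq p^2$, which itself comes from applying Theorem \ref{lower bound for h theorem} to the polynomial $\mathfrak{r}-c$ where $c=\mathfrak{r}(1,0,\dots,0,1)\neq 0$. This gives
$$\pi(\cG_p)\geq \frac{p^2}{p^{2d+2}-1}>\frac{p^2}{p^{2d+2}}=\frac{1}{p^{2d}},$$
valid for every prime $p$.

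For part (2), I would use part (3) of Lemma \ref{2d p to the 2d+1}, which after rearrangement reads $\#\bar{G}_p\geq p^{2d+2}-2dp^{2d+1}=p^{2d+1}(p-2d)$. Under the hypothesis $p>2d$ this quantity is positive, so
$$\pi(\cG_p)\geq \frac{p^{2d+1}(p-2d)}{p^{2d+2}-1}>\frac{p^{2d+1}(p-2d)}{p^{2d+2}}=\frac{p-2d}{p}=1-\frac{2d}{p},$$
which is the desired inequality.

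No step presents an obstacle; this is a two-line bookkeeping corollary. The only thing to be careful about is the direction of the strict inequality $p^{2d+2}-1<p^{2d+2}$ (dividing by a smaller positive quantity enlarges the ratio), which is used in both parts to pass from the exact formula \eqref{eq:pi(cG_p)} to the clean bounds in the statement.
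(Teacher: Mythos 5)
Your proof is correct and follows exactly the same route as the paper: combine the formula $\pi(\cG_p)=\#\bar{G}_p/(p^{2d+2}-1)$ from \eqref{eq:pi(cG_p)} with parts (4) and (3) of Lemma~\ref{2d p to the 2d+1} respectively, and drop the $-1$ in the denominator. Nothing to add.
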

\begin{proof}
    This is from \eqref{eq:pi(cG_p)}, part (3), and part (4) of Lemma~\ref{2d p to the 2d+1}.
\end{proof}

\subsection{Some positive densities}
\par Throughout this subsection, let $p$ be a prime number 
outside $\Sigma$ and let $\gamma_1,\gamma_2,\ldots$ denote positive constants depending only on $d$. First, we study the quantities $\pi(\cN_p^{(2)})$. Then we establish the corresponding admissibility condition for $\V_{\Sigma}$ and $\cW_{\Sigma}$. Finally we prove the first two theorems stated in the Introduction.

For $1\leq k\leq 2d$, the set $\cN_p^{(k)}$ is defined by a local condition
since it is the preimage of
$$\bar{\cN}_p^{(k)}:= \{P\in\mathbb{P}^{2d+1}(\Z/p^k\Z):\ \mathfrak{r}(P)\neq 0\ \text{in}\ \Z/p^k\Z\}$$
under the reduction map $\mathbb{P}^{2d+1}(\Q)\rightarrow\mathbb{P}^{2d+1}(\Z/p^k\Z)$. Hence the sets $\V_{\Sigma}$ and $\cW_{\Sigma}$ are defined
by local conditions. 
The affine lifting of $\bar{\cN}_p^{(k)}$ is
$$\bar{N}_p^{(k)}:=\{Q\in(\Z/p^k\Z)^{2d+2}:\ \mathfrak{r}(Q)\neq 0\ \text{in}\ \Z/p^k\Z\}.$$
We emphasize that since $k\leq 2d$ and $\mathfrak{r}$ is homogeneous of 
degree $2d$, the condition $\mathfrak{r}(Q)\neq 0$ in $\Z/p^k\Z$
guarantees that at least one of the entries of $Q$ is a unit of $\Z/p^k\Z$. 
The preimeage $N_p^{(k)}$ of $\bar{N}_p^{(k)}$ under the reduction map
$\Z^{2d+2}\rightarrow (\Z/p^k\Z)^{2d+2}$ is the affine lifting of
$\cN_p^{(k)}$. 
From \eqref{eq:pi vs pi}, we have:
\begin{equation}\label{eq:pi}
    \pi(\cN_p^{(k)})=\frac{\#\bar{N}_p^{(k)}}{p^{(2d+2)k}(1-p^{-(2d+2)})}.
\end{equation}
For $1\leq k_1\leq k_2\leq 2d$, each element of $\bar{N}_p^{(k_1)}$ lifts to
$p^{(k_2-k_1)(2d+2)}$ many elements of $\bar{N}_p^{(k_1)}$. Therefore:
\begin{equation}\label{eq:picNpk1 < picNpk2}
    \pi(\cN_p^{(k_1)})\leq \pi(\cN_p^{(k_2)}). 
\end{equation}

The complement $(\cN_p^{(k)})':=\mathbb{P}^{2d+1}(\Q)\setminus\cN_p^{(k)}$
is defined by a local condition since it is the preimage of
$$(\bar{\cN}_p^{(k)})':=\mathbb{P}^{2d+1}(\Z/p^k\Z)\setminus\bar{\cN}_p^{(k)}= \{P\in\mathbb{P}^{2d+1}(\Z/p^k\Z):\ \mathfrak{r}(P)=0\ \text{in}\ \Z/p^k\Z\}.$$
We have:
\begin{equation}\label{eq:piNk1'>piNk2'}
\pi((\cN_p^{(k_1)})')=1-\pi(\cN_p^{(k_1)})\geq 1-\pi(\cN_p^{(k_2)})=\pi((\cN_p^{(k_2)})').
\end{equation}

The complement $(\bar{N}_p^{(k)})':=(\Z/p^k\Z)^{2d+2}\setminus\bar{N}_p^{(k)}$ is the zero set of 
$\mathfrak{r}$ in $(\Z/p^k\Z)^{2d+2}$ and the complement $(N_p^{(k)})':=\Z^{2d+2}\setminus N_p^{(k)}$ is the preimage of $(\bar{N}_p^{(k)})'$ under the reduction map
$\Z^{2d+2}\rightarrow(\Z/p^k\Z)^{2d+2}$. As in Subsection~\ref{subsec:admissibility}, we warn the reader that neither $(\bar{N}_p^{(k)})'$ is the affine lifting of $(\bar{\cN}_p^{(k)})'$
nor $(N_p^{(k)})'$ is the affine lifting of $(\cN_p^{(k)})'$ (roughly speaking, this means taking the complement and taking the affine lifting do not commute to each other). In fact, the affine lifting of $(\bar{\cN}_p^{(k)})'$ is the set of points in $(\bar{N}_p^{(k)})'$ having at least one entry that is a unit of $\Z/p^k\Z$. Similarly, the affine lifting of
$(\cN_p^{(k)})'$ is the set of points in $(N_p^{(k)})'$ having at least one entry that is not divisible by $p$. Therefore:
\begin{equation}\label{eq:pi cNpk complement}
    \pi((\cN_p^{(k)})')\leq \frac{\# (\bar{N}_p^{(k)})'}{p^{(2d+2)k}(1-p^{-(2d+2)})}.
\end{equation}

In order to obtain an upper bound for $\# (\bar{N}_p^{(k)})'$, we use the following result
\begin{proposition}\label{f irred}
    The generic resultant polynomial $\mathfrak{r}(x_0,\ldots,y_d)\in\Z[x_0,\ldots,y_d]$ is absolutely irreducible. 
\end{proposition}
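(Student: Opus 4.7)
The plan is to prove absolute irreducibility via the classical root factorization of the resultant combined with a symmetric-function / UFD argument, passing from the coefficient variables $x_i, y_j$ to the roots $\alpha_i, \beta_j$, where $\mathfrak{r}$ splits into distinct prime linear forms.

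I would first introduce $2d+2$ algebraically independent indeterminates $u, v, \alpha_1, \ldots, \alpha_d, \beta_1, \ldots, \beta_d$ over $\overline{\mathbb{Q}}$ and consider the substitution homomorphism
$$\phi: \overline{\mathbb{Q}}[x_0, \ldots, x_d, y_0, \ldots, y_d] \longrightarrow R := \overline{\mathbb{Q}}[u, v, \alpha_1, \ldots, \alpha_d, \beta_1, \ldots, \beta_d]$$
given by $x_i \mapsto u \cdot (-1)^{d-i} e_{d-i}(\alpha_1, \ldots, \alpha_d)$ and $y_j \mapsto v \cdot (-1)^{d-j} e_{d-j}(\beta_1, \ldots, \beta_d)$, where $e_k$ denotes the $k$-th elementary symmetric polynomial. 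Concretely this corresponds to sending $F(X,Y)$ to $u\prod_i(X-\alpha_iY)$ and $G(X,Y)$ to $v\prod_j(X-\beta_jY)$, so the standard root formula for the resultant yields
$$\phi(\mathfrak{r}) = u^d v^d \prod_{1 \le i,j \le d}(\alpha_i-\beta_j).$$
The homomorphism $\phi$ is injective: its images $\phi(x_0), \ldots, \phi(x_d), \phi(y_0), \ldots, \phi(y_d)$ are algebraically independent in $R$ by algebraic independence of the elementary symmetric polynomials together with the auxiliary variables $u, v$. Moreover its image lies in the $S_d \times S_d$-invariant subring of $R$ under the action that independently permutes the $\alpha_i$'s and the $\beta_j$'s.

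Next I would suppose for contradiction that $\mathfrak{r} = P Q$ in $\overline{\mathbb{Q}}[x_0, \ldots, y_d]$ with both factors nonconstant. Since $R$ is a UFD in which $u$, $v$, and the differences $\alpha_i-\beta_j$ are pairwise non-associate primes (each being linear in a disjoint subset of the variables), unique factorization of $\phi(\mathfrak{r}) = \phi(P)\phi(Q)$ forces
$$\phi(P) = c \cdot u^a v^b \prod_{(i,j) \in T}(\alpha_i - \beta_j)$$
for some $0 \le a,b \le d$, a subset $T \subseteq \{1, \ldots, d\}^2$, and some $c \in \overline{\mathbb{Q}}^*$. Because $\phi(P)$ is $S_d \times S_d$-invariant, $T$ must be stable under the component-wise action of $S_d \times S_d$ on $\{1,\ldots,d\}^2$; since this action is transitive, either $T = \emptyset$ or $T = \{1,\ldots,d\}^2$.

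If $T = \emptyset$, then $\phi(P) = c\,\phi(x_d^a y_d^b)$, and injectivity of $\phi$ yields $P = c\, x_d^a y_d^b$. For $Q = \mathfrak{r}/P$ to be a polynomial we would then need $x_d^a y_d^b \mid \mathfrak{r}$; but the root formulas $\mathfrak{r} = x_d^d\prod_i G(\alpha_i,1)$ and $\mathfrak{r} = y_d^d\prod_j F(\beta_j,1)$ show by direct monomial inspection that $\mathfrak{r}$ contains both $x_d^d y_0^d$ and $x_0^d y_d^d$ with coefficient $1$, so neither $x_d$ nor $y_d$ divides $\mathfrak{r}$. Hence $a = b = 0$ and $P$ is constant, contradicting nontriviality. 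The case $T = \{1,\ldots,d\}^2$ is symmetric, applying the same reasoning to $Q$. The main technical point in this plan is carefully establishing the injectivity of $\phi$ and the $S_d \times S_d$-equivariance of its image; once these are in hand, the monomial computations that rule out $x_d \mid \mathfrak{r}$ and $y_d \mid \mathfrak{r}$ are routine, and the transitivity of the $S_d \times S_d$-action closes the argument.
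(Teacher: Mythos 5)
Your argument is correct. The paper itself does not give a proof of Proposition~\ref{f irred}; it simply cites \cite{GKZ}, so there is no ``paper proof'' to line up against. What you have written is the standard self-contained argument for absolute irreducibility of the generic resultant: pass to the splitting variables via the substitution $F \mapsto u\prod_i(X-\alpha_iY)$, $G \mapsto v\prod_j(X-\beta_jY)$, use the product formula $\phi(\mathfrak{r})=u^dv^d\prod_{i,j}(\alpha_i-\beta_j)$, and exploit $S_d\times S_d$-invariance of the image together with transitivity of the diagonal action on $\{1,\dots,d\}^2$ to force any nontrivial factorization to collapse. The injectivity of $\phi$ via transcendence degree, the identification of $u,v,\alpha_i-\beta_j$ as pairwise non-associate primes in the UFD $R$, and the monomial check that $x_0^dy_d^d$ and $x_d^dy_0^d$ occur in $\mathfrak{r}$ (so that $x_d,y_d\nmid\mathfrak{r}$) are all sound.

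One small imprecision worth tightening: you justify pairwise non-associateness of the prime factors by saying each is ``linear in a disjoint subset of the variables,'' but $\alpha_1-\beta_1$ and $\alpha_1-\beta_2$ both involve $\alpha_1$, so the variable sets are not literally disjoint. The correct (and equally quick) justification is that two distinct degree-one polynomials of the form $\alpha_i-\beta_j$ cannot differ by a unit (a scalar), since matching coefficients forces them to be identical. This is cosmetic and does not affect the argument. Compared to deferring to \cite{GKZ}, your route buys a short, elementary, and completely explicit proof tailored to the case of two binary forms, at the cost of not immediately covering the more general resultant systems treated in that reference; for the purposes of this paper either suffices.
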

\begin{proof}
    This result follows from \cite[Ch.3, Proposition 3.1 and Example 3.6]{GKZ}.
\end{proof}

\begin{notn}
For the rest of this section, let $\mathfrak{s}$ denote one of the partial derivatives of $\mathfrak{r}$ (when necessary, which partial derivative will be specified precisely in later context). This  is a homogeneous polynomial of degree $2d-1$ and has degree $d-1$ in the variable for which we differentiate.    
\end{notn}

\begin{proposition}\label{Cp to the 2d}
    $\# \{P\in \F_p^{2d+2}\mid \mathfrak{r}(P)=\mathfrak{s}(P)=0\}\leq \gamma_1 p^{2d}$. 
\end{proposition}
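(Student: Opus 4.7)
My plan is to use the absolute irreducibility of $\mathfrak{r}$ (Proposition \ref{f irred}) together with the strict degree inequality $\deg \mathfrak{s} = 2d-1 < 2d = \deg \mathfrak{r}$ to conclude that $\mathfrak{r}$ and $\mathfrak{s}$ are coprime in $\Q[x_0,\ldots,y_d]$, and then reduce the intersection count to a Schwartz--Zippel application via a classical elimination-theoretic argument.

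First I would select the variable $x_0$ for elimination. A direct Sylvester-matrix inspection shows that $\mathfrak{r}$, viewed as a polynomial in $x_0$ with coefficients in $\Z[x_1,\ldots,y_d]$, has degree exactly $d$, with leading coefficient $\pm y_d^d$: the only way to extract $x_0^d$ from the determinant is to choose $x_0$ from each of the first $d$ rows of the Sylvester matrix, and the remaining $d\times d$ minor on the last $d$ rows is upper-triangular with $y_d$ on its diagonal. Combining absolute irreducibility with Gauss's lemma, $\mathfrak{r}$ remains irreducible in $\Q(x_1,\ldots,y_d)[x_0]$, so $R:=\op{Res}_{x_0}(\mathfrak{r},\mathfrak{s})\in\Z[x_1,\ldots,y_d]$ is a nonzero polynomial whose total degree is bounded purely in terms of $d$.

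Next I would split the common zero set in $\F_p^{2d+2}$ according to whether $y_d=0$. In the slice $y_d=0$, the specialization $\mathfrak{r}|_{y_d=0}$ is nonzero, because by the symmetry $\op{Res}(F,G)=\pm\op{Res}(G,F)$ the coefficient of $y_0^d$ in $\mathfrak{r}$ is $\pm x_d^d$, a monomial that does not involve $y_d$; so Theorem \ref{d q n-1} applied in $2d+1$ variables bounds its zero locus by $2d\cdot p^{2d}$. On the open part $y_d\neq 0$, the leading coefficient of $\mathfrak{r}$ in $x_0$ is a unit of $\F_p$, hence whenever $R$ is nonzero at a given $(a_1,\ldots,b_d)$ the two $x_0$-specializations are coprime in $\F_p[x_0]$ and share no root; otherwise the fiber has at most $d$ common $x_0$-roots. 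Applying Theorem \ref{d q n-1} to $R$ bounds the bad base locus $\{R=0\}$ by $(\deg R)\,p^{2d}$ in $\F_p^{2d+1}$, contributing at most $d(\deg R)p^{2d}$ to the count.

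Finally, the elimination argument above works only when $R\bmod p\neq 0$, which fails for the finitely many primes $p$ dividing the content of $R$; for each such exceptional prime the trivial bound $p^{2d+2}$ exceeds $p^{2d}$ by a bounded multiplicative factor that is absorbed into $\gamma_1$. The step I expect to be most technical is the explicit identification of the leading coefficient of $\mathfrak{r}$ in $x_0$ and the verification that $\mathfrak{r}|_{y_d=0}\not\equiv 0$; both are handled by the Sylvester-matrix computation and the $F\leftrightarrow G$ symmetry indicated above.
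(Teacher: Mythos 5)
Your proof is correct, and it takes a genuinely different route from the paper's. The paper applies Noether normalization to the ring $\Q[x_0,\ldots,y_d]/(\mathfrak{r},\mathfrak{s})$, which has dimension $D\le 2d$ because $\mathfrak{r}$ is irreducible and does not divide $\mathfrak{s}$: after clearing denominators, a finite integral map onto a $D$-dimensional affine space survives reduction modulo all primes $p\nmid N$, and each fiber over $(\Z/p\Z)^D$ has $O(1)$ points, giving $O(p^D)\le O(p^{2d})$. You instead eliminate a single variable via $R=\op{Res}_{x_0}(\mathfrak{r},\mathfrak{s})\in\Z[x_1,\ldots,y_d]$ and count fiber by fiber over the projection forgetting $x_0$, splitting on $y_d=0$ versus $y_d\neq 0$ to control the degree of $\mathfrak{r}$ in $x_0$. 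Both proofs rest on the same two inputs (Proposition~\ref{f irred} and $\mathfrak{r}\nmid\mathfrak{s}$, which you correctly deduce from the total-degree inequality rather than the $x_0$-degree), but they package codimension two differently: the Noether-normalization route is shorter and generalizes automatically to more defining equations without needing to identify a good elimination variable or stratify by a leading coefficient, whereas your elimination route is more elementary and would yield, in principle, an explicit $\gamma_1$ in terms of $\deg R$ and its content. One small simplification: the "coprime specializations" step is cleanest if you just observe that the Sylvester determinant gives $R=u\mathfrak{r}+v\mathfrak{s}$ with $u,v\in\Z[x_0,\ldots,y_d]$, so any common $\F_p$-zero of $\mathfrak{r}$ and $\mathfrak{s}$ forces $R$ to vanish at the projected point; this removes the leading-coefficient bookkeeping entirely in Case 2 of your argument (it is still needed, of course, to bound the number of $x_0$-roots over each bad base point by $d$ rather than $p$).
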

\begin{proof}
Consider the algebra $\mathfrak{B}:=\Q[x_0, \dots, x_d, y_0, \dots, y_d]/(\mathfrak{r},\mathfrak{s})$ and let $\bar{x}_i$ and $\bar{y}_j$ denote the image of
$x_i$ and $y_j$ in $\mathfrak{B}$. Since $\mathfrak{r}$ is irreducible and does not divide
$\mathfrak{s}$, the dimension of $\mathfrak{B}$ is $D\leq 2d$. By the Noether normalization lemma, 
there exist $X_1,\ldots,X_D\in \mathfrak{B}$ that are algebraically independent over $\Q$ such that 
$\mathfrak{B}$ is integral over $\Q[X_1, \dots, X_{D}]$. Thus there is a natural number $N$ such that the $\bar{x}_i$'s and $\bar{y}_j$'s are all  integral over $\Z[1/N, X_1, \dots, X_{D}]$. Denote by $\gamma_1'$ the product of the degrees of the minimal polynomials of $\bar{x}_i$ and $\bar{y}_j$ over $\Z[1/N, X_1, \dots, X_{D}]$.

\par Now suppose $p\nmid N$, and $(X_1, \dots, X_{D})\in (\Z/p\Z)^D$. Then each of the variables $x_i$ and $y_j$ satisfies a monic polynomial equation over $\Z/p\Z$. Let $g(z)\in \Z/p\Z[z]$ be their product and suppose that \[P=(A_0, \dots, A_d, B_0, \dots, B_d)\in (\Z/p\Z)^{2d+2}\] is a point for which $\mathfrak{r}(P)=\mathfrak{s}(P)=0$. Then each of the coordinates $A_i$ and $B_j$ is a solution to $g(z)=0$. Since the degree of $g(z)$ is bounded by the constant $\gamma_1'$ which is independent of the specialization $(X_1, \dots, X_D)$ and the prime $p$, we find that there are at most $(\gamma_1')^{2d+2}$ choices of the point $P$ such that $\mathfrak{r}(P)=\mathfrak{s}(P)=0$. Thus for $p\nmid N$, the number of points $P$ is at most $(\gamma_1')^{2d+2} p^{2d}$. Setting $\gamma_1:=\op{max}\{(\gamma')^{2d+2},N^2\}$, we find that the inequality in the statement of the proposition holds trivially for $p\mid N$. This finishes the proof. 
\end{proof}
\begin{remark}
    In the above proof, suppose $\bar{x}_i$ (resp. $\bar{y}_j$) satisfies an integral dependence relation of degree $a_i$ (respectively $b_j$) over 
    $\Z[1/N,X_1,\ldots,X_D]$
    then we may take $\gamma_1=\max(N^2,a_1\cdots a_db_1\cdots b_d)$.
\end{remark}

The following  Hensel's lifting lemma is well-known, we include a simple proof for the convenience of the reader.
\begin{lemma}\label{partial derivative lemma}
    Let $h\in \Z_p[x_1, \dots, x_n]$ be a polynomial and $\alpha\in (\Z/p\Z)^n$. Assume that $h(\alpha)=0\bmod p$ and $\frac{\partial h}{\partial x_i}(\alpha)\neq 0\bmod p$ for some $i\in [1,n]$. Then for every $k\geq 1$, the element $\alpha$ lifts to $p^{(n-1)(k-1)}$ many values $\beta\in (\Z/p^k\Z)^n$ for which $h(\beta)=0\bmod p^k$.
\end{lemma}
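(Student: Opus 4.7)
The plan is to induct on $k$, counting how many lifts from $(\Z/p^{k-1}\Z)^n$ to $(\Z/p^k\Z)^n$ exist at each step. The base case $k=1$ is trivial since $p^{(n-1)\cdot 0}=1$ and $\alpha$ itself is the unique lift.

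For the inductive step, suppose $\beta \in (\Z/p^{k-1}\Z)^n$ reduces to $\alpha$ modulo $p$ and satisfies $h(\beta)\equiv 0\pmod{p^{k-1}}$. Every lift of $\beta$ to $(\Z/p^k\Z)^n$ has the form $\beta+p^{k-1}\gamma$ with $\gamma\in (\Z/p\Z)^n$. I would expand $h$ using a first-order Taylor expansion, noting that all higher-order terms in the expansion carry a factor of $p^{2(k-1)}\equiv 0\pmod{p^k}$ when $k\geq 1$; this gives
\[
h(\beta+p^{k-1}\gamma) \equiv h(\beta)+p^{k-1}\sum_{j=1}^{n}\frac{\partial h}{\partial x_j}(\beta)\,\gamma_j \pmod{p^k}.
\]
Writing $h(\beta)=p^{k-1}c$ for some $c\in\Z/p\Z$, the congruence $h(\beta+p^{k-1}\gamma)\equiv 0\pmod{p^k}$ becomes the linear equation
\[
c+\sum_{j=1}^{n}\frac{\partial h}{\partial x_j}(\alpha)\,\gamma_j \equiv 0\pmod{p},
\]
where I have used $\beta\equiv \alpha\pmod{p}$ to replace the partial derivatives. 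By hypothesis the coefficient of $\gamma_i$ is a unit modulo $p$, so for any of the $p^{n-1}$ choices of the remaining coordinates $\gamma_j$ ($j\neq i$), there is a unique $\gamma_i\in\Z/p\Z$ solving the equation. Hence $\beta$ has exactly $p^{n-1}$ valid lifts to $(\Z/p^k\Z)^n$.

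Combining the inductive hypothesis, which gives $p^{(n-1)(k-2)}$ lifts of $\alpha$ in $(\Z/p^{k-1}\Z)^n$, with the $p^{n-1}$ lifts per step produced above, yields the desired total of $p^{(n-1)(k-1)}$ lifts of $\alpha$ in $(\Z/p^k\Z)^n$. The only subtle point is ensuring that the Taylor expansion truncation is valid to the claimed order; this follows simply from the binomial expansion since each monomial in $h(\beta+p^{k-1}\gamma)-h(\beta)-p^{k-1}\sum_j \partial_{x_j}h(\beta)\gamma_j$ involves at least two factors of $p^{k-1}$, and $2(k-1)\geq k$ for $k\geq 2$, which is exactly the range where the inductive step is invoked.
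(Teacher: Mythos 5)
Your proof is correct and follows essentially the same argument as the paper: induction on $k$, Taylor expansion to first order (with higher-order terms vanishing modulo $p^k$), and reduction to a nondegenerate linear equation over $\mathbb{F}_p$ in $n$ unknowns with exactly $p^{n-1}$ solutions. The only cosmetic differences are that you index the induction from $k-1$ to $k$ rather than from $k$ to $k+1$, and you evaluate the partial derivatives at $\alpha$ rather than at a lift of $\beta$ (legitimate since they agree modulo $p$).
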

\begin{proof}
We prove this by induction on $k$; the case $k=1$ is immediate. Suppose the statement holds for $k$. Let $\beta\in(\Z/p^k\Z)^n$ be one of the $p^{(n-1)(k-1)}$ many liftings of $\alpha$
with $h(\beta)=0\bmod p^k$. Let $(b_1,\ldots,b_n)\in\Z^n$ be a preimage of $\beta$ under the reduction map $\Z^n\rightarrow (\Z/p^k\Z)^n$. We have the Taylor series expansion:
\[h(x_1, \dots, x_n)=h(b_1,\ldots,b_n)+\sum_{i=1}^n \frac{\partial h }{\partial x_i}(b_1,\ldots,b_n)(x_i-b_i)+\text{higher order terms}.\]

Liftings of $\beta$ in $(\Z/p^{k+1}\Z)^n$ are the images of integer tuples 
of the form
$(b_1+t_1p^k,\ldots,b_n+t_np^k)$ with $t_1,\ldots,t_n\in\{0,\ldots,p-1\}$. We have
$h(b_1+t_1p^k,\ldots,b_n+t_np^k)=0\bmod p^{k+1}$ if and only if
$$\frac{h(b_1,\ldots,b_n)}{p^k}+\sum_{i=1}^n \frac{\partial h }{\partial x_i}(b_1,\ldots,b_n)t_i=0\bmod p.$$
Since $\frac{\partial h}{\partial x_i}(b_1,\ldots,b_n)\neq 0\bmod p$ for some $i$, there are 
exactly $p^{n-1}$ many solutions $(t_1,\ldots,t_n)$. Hence, overall there are 
$p^{(n-1)k}$ many desired liftings of $\alpha$ in $(\Z/p^{k+1}\Z)^n$.
\end{proof}

\begin{proposition}\label{prop:gamma2}
     Let $2\leq k\leq 2d$, we have $\displaystyle\pi((\cN_p^{(k)})')\leq \frac{\gamma_2}{p^2}$.
\end{proposition}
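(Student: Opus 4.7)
The plan is to first reduce to the case $k=2$ and then count zeros of $\mathfrak{r}$ in $(\Z/p^2\Z)^{2d+2}$ via a Hensel-style Taylor expansion, separating the \emph{smooth} and \emph{singular} strata of the reduction mod $p$.

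First, by \eqref{eq:piNk1'>piNk2'} we have $\pi((\cN_p^{(k)})')\leq \pi((\cN_p^{(2)})')$ for every $k\geq 2$, so it suffices to produce a bound of the form $\gamma_2/p^2$ in the case $k=2$. Combined with \eqref{eq:pi cNpk complement}, the task reduces to showing that
\[
\#(\bar{N}_p^{(2)})' \;=\; \#\bigl\{Q\in(\Z/p^2\Z)^{2d+2}:\mathfrak{r}(Q)=0\bigr\} \;\leq\; \gamma_3\, p^{4d+2}
\]
for some constant $\gamma_3$ depending only on $d$, since the denominator in \eqref{eq:pi cNpk complement} is $p^{4d+4}(1-p^{-(2d+2)})\geq \tfrac{1}{2}p^{4d+4}$.

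To count $\#(\bar{N}_p^{(2)})'$, I would write each $Q\in(\Z/p^2\Z)^{2d+2}$ as $Q=Q_0+pQ_1$ with $Q_0,Q_1\in(\Z/p\Z)^{2d+2}$, and use the first-order Taylor expansion
\[
\mathfrak{r}(Q_0+pQ_1)\equiv \mathfrak{r}(\tilde Q_0) + p\sum_{i} \tfrac{\partial \mathfrak{r}}{\partial x_i}(Q_0)\cdot (Q_1)_i \pmod{p^2},
\]
where $\tilde Q_0\in\Z^{2d+2}$ is any integer lift of $Q_0$. For this to vanish mod $p^2$, one needs $\mathfrak{r}(Q_0)\equiv 0\pmod{p}$ together with an affine linear condition on $Q_1$ whose nature depends on whether $Q_0$ is smooth or singular on the hypersurface $\{\mathfrak{r}=0\}$. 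For the smooth stratum (some partial $\tfrac{\partial \mathfrak{r}}{\partial x_i}(Q_0)\not\equiv 0\pmod p$), Lemma \ref{partial derivative lemma} gives exactly $p^{2d+1}$ valid $Q_1$'s, and the number of such $Q_0$ is at most $\#\bar{G}_p'\leq 2d\, p^{2d+1}$ by Lemma \ref{2d p to the 2d+1}(3); this contributes at most $2d\, p^{4d+2}$. For the singular stratum (all partials vanish at $Q_0$), each $Q_1$ either works or fails uniformly, so the contribution is at most $p^{2d+2}$ times the number of singular $Q_0$'s.

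The only nontrivial input is the bound on the singular stratum. Here the key observation is that every singular $Q_0$ lies in the zero set of $\mathfrak{r}$ and of any one fixed partial derivative $\mathfrak{s}=\tfrac{\partial \mathfrak{r}}{\partial x_i}$; applying Proposition \ref{Cp to the 2d} to this $\mathfrak{s}$ bounds the singular stratum by $\gamma_1 p^{2d}$, giving a total singular contribution of at most $\gamma_1 p^{4d+2}$. Adding the two contributions yields $\#(\bar{N}_p^{(2)})'\leq (2d+\gamma_1)p^{4d+2}$, and plugging this into \eqref{eq:pi cNpk complement} gives the claim with $\gamma_2:=2(2d+\gamma_1)$. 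The main obstacle is precisely this singular-stratum bound, which is why Proposition \ref{Cp to the 2d} (and hence the absolute irreducibility of $\mathfrak{r}$ from Proposition \ref{f irred}) is indispensable: without it the singular stratum could \emph{a priori} be of codimension $\leq 1$, which would ruin the $1/p^2$ saving.
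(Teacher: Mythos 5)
Your proof is correct and follows essentially the same route as the paper: reduce to $k=2$ via \eqref{eq:piNk1'>piNk2'}, split the zero locus of $\mathfrak{r}$ mod $p$ into a ``smooth'' part bounded by Theorem~\ref{d q n-1} (lifted via Lemma~\ref{partial derivative lemma}) and a ``singular'' part bounded by Proposition~\ref{Cp to the 2d}, then combine using \eqref{eq:pi cNpk complement}. The only cosmetic difference is that the paper partitions according to vanishing of a single fixed partial $\mathfrak{s}$ rather than all partials, but both partitions yield the same bound $(2d+\gamma_1)p^{4d+2}$.
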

\begin{proof}
From \eqref{eq:piNk1'>piNk2'}, it suffices to show $\pi((\cN_p^{(2)})')\leq \gamma_2/p^2$.  We define subsets $S_1$ and $S_2$ of $(\Z/p\Z)^{2d+2}$ as follows. 
\begin{itemize}
    \item The set $S_1$ consists of points for which $\mathfrak{r}=0$ and $\mathfrak{s}=0$. 
    \item The set $S_2$ consists of points for which $\mathfrak{r}=0$ and $\mathfrak{s}\neq 0$. 
\end{itemize}
Each element of $S_1$ lifts to $p^{2d+2}$ many elements of $(\Z/p^2\Z)^{2d+2}$ and we have
$\#S_1\leq \gamma_1p^{2d}$ by Proposition~\ref{Cp to the 2d}. Each element of
$S_2$ lifts to $p^{2d+1}$ many elements $\beta\in(\Z/p^2\Z)^{2d+2}$ such that
$\mathfrak{r}(\beta)=0\bmod p^2$ by Lemma~\ref{partial derivative lemma} and we have $\#S_2\leq 2dp^{2d+1}$ by Theorem~\ref{d q n-1}. Overall, we have:
$$\# (\cN_p^{(2)})'\leq (\gamma_1+2d)p^{4d+2}.$$
Then we apply \eqref{eq:pi cNpk complement} to get the desired upper bound on $\pi((\cN_p^{(2)})')$.
\end{proof}

The set $W_{\Sigma}:=\left(\bigcap_{p\in\Sigma} G_p\right)\cap\left(\bigcap_{p\notin\Sigma} N_p^{(2d)}\right)$ is the affine lifting of $\cW_{\Sigma}$. 
Recall the complement $(N_p^{(2d)})'=\Z^{2d+2}\setminus N_p^{(2d)}$. 
We recall the notation at the beginning of Subsection~\ref{subsec:admissibility}. Let $Z$ be a positive real number such that
$Z>\max\{p\in\Sigma\}$ and let 
$$W_{>Z}'=\bigcup_{p>Z} (N_p^{(2d)})'$$
(we drop the subscript $\Sigma$ here to simplify the notation slightly). We have:
\begin{proposition}\label{delta admissible proposition}
    The set $W_{\Sigma}$ is $\mathfrak{d}^*$-admissible (cf.~Definition \ref{def:admissibility conditions}). Therefore 
    $$\mathfrak{d}(\cW_\Sigma)=\prod_{p\in\Sigma}\pi(\cG_p)\prod_{p\notin\Sigma}\pi(\cN_p^{(2d)}).$$
\end{proposition}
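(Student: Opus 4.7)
My plan is to first establish the $\mathfrak{d}^*$-admissibility of $W_\Sigma$ in the sense of Definition \ref{def:admissibility conditions}, and then to deduce the product formula directly from Proposition \ref{prop:symbol delta admissible}. Fixing $Z>\max(\Sigma)$ so that $W_{>Z}'=\bigcup_{p>Z}(N_p^{(2d)})'$, I would observe that the condition $p^{2d}\mid \mathfrak{r}(Q)$ implies $p^2\mid \mathfrak{r}(Q)$, so $(N_p^{(2d)})'\subseteq (N_p^{(2)})'$ and it suffices to show that $\overline{\mathfrak{d}}^*\bigl(\bigcup_{p>Z}(N_p^{(2)})'\bigr)\to 0$ as $Z\to\infty$.

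Next I would introduce an auxiliary threshold $Y>Z$ and split the union as $\bigcup_{Z<p\leq Y}(N_p^{(2)})'\cup\bigcup_{p>Y}(N_p^{(2)})'$. For the finite piece with primes in $(Z,Y]$, subadditivity of upper densities together with Proposition \ref{prop:gamma2} and the comparison \eqref{eq:pi vs pi} yield a contribution $\leq\sum_{p>Z}\gamma_2/p^2=O(1/Z)$, uniformly in $Y$.

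The hardest step will be handling the tail $\bigcup_{p>Y}(N_p^{(2)})'$. I would partition $Q\in[-x,x]^{2d+2}$ in this set by whether $\mathfrak{r}(Q)=0$, in which case Schwartz-Zippel (Lemma \ref{Schzipp lemma}) contributes only $O(x^{2d+1})$ points, or $\mathfrak{r}(Q)\neq 0$, which forces $p^2\leq |\mathfrak{r}(Q)|=O(x^{2d})$ and hence $p=O(x^d)$. For the latter case, the absolute irreducibility of $\mathfrak{r}$ in Proposition \ref{f irred} (which in particular implies $\mathfrak{r}$ has no fixed square divisor) allows me to invoke a Poonen-style squareful sieve in the spirit of \cite{Poonensquarefree} to obtain a bound of the shape
\[\#\bigl\{Q\in[-x,x]^{2d+2}:\ \exists\,p>Y,\ p^2\mid\mathfrak{r}(Q)\neq 0\bigr\}=O\!\left(\frac{x^{2d+2}}{Y\log Y}\right).\]
Combining the three pieces yields $\overline{\mathfrak{d}}^*(W_{>Z}')\leq O(1/Z)+O(1/(Y\log Y))$ for every $Y>Z$; letting first $Y\to\infty$ and then $Z\to\infty$ gives the admissibility.

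The product formula follows immediately by applying Proposition \ref{prop:symbol delta admissible} with $\delta=\mathfrak{d}$ and separating the local factors according to whether $p\in\Sigma$ (where $\cS_p=\cG_p$) or $p\notin\Sigma$ (where $\cS_p=\cN_p^{(2d)}$). The key obstacle is the uniform control of the large-prime tail: Proposition \ref{prop:gamma2} only bounds a single prime at a time, so some version of Poonen's squareful sieve, applied to the specific homogeneous polynomial $\mathfrak{r}$, is needed to prevent the contributions of infinitely many large primes from accumulating a positive density.
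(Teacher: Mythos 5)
Your plan has a genuine gap in what you yourself flag as the hardest step: the tail estimate for $\bigcup_{p>Y}(N_p^{(2)})'$.

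The reduction from $(N_p^{(2d)})'$ to $(N_p^{(2)})'$ discards exactly the information that makes an \emph{unconditional} height-density argument possible. If $\mathfrak{r}(Q)\neq 0$ and $p^2\mid\mathfrak{r}(Q)$, the size bound $|\mathfrak{r}(Q)|=O(x^{2d})$ only forces $p=O(x^d)$; the Hensel-type counting then contributes a term on the order of $x^{2d+1}\cdot(\text{number of primes up to }O(x^d))\approx x^{3d+1}/\log x$, which is \emph{not} $o(x^{2d+2})$ for $d\geq 2$. This is precisely the obstruction that makes Poonen's squarefree density for multivariable polynomials conditional on $abc$ for the natural/box density (his unconditional result in \cite{Poonensquarefree} is only for the weak box density, where one coordinate tends to infinity first). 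Absolute irreducibility of $\mathfrak{r}$ alone does not rescue this; there is no unconditional ``Poonen-style squareful sieve'' yielding the bound $O(x^{2d+2}/(Y\log Y))$ you state.

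The paper avoids this by \emph{not} weakening $p^{2d}\mid\mathfrak{r}$ to $p^2\mid\mathfrak{r}$. Keeping the full power, one fixes $\mathfrak{s}=\partial\mathfrak{r}/\partial x_0$ and partitions $W_{>Z}'(x)$ according to whether $p\mid\mathfrak{s}(Q)$: the part with $p\mid\mathfrak{s}(Q)$ is handled by \cite[Lemma~5.1]{Poonensquarefree} (which is unconditional, as it concerns simultaneous vanishing of $\mathfrak{r}$ mod $p^2$ and of one partial derivative mod $p$), while on the complementary part the one-variable polynomial $h_Q(u)=\mathfrak{r}(u,A_1,\ldots,B_d)$ has degree $\leq d$, Hensel gives at most $d$ residue classes of $A_0$ mod $p^{2d}$, and most importantly $p^{2d}\leq|h_Q(A_0)|\leq\gamma_3 x^{2d}$ forces $p\leq\gamma_3^{1/(2d)}x$, so the prime count contributes only $O(x^{2d+2}/\log x)$. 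In short: the exponent $2d$ is precisely chosen so that the range of relevant primes is $O(x)$ rather than $O(x^d)$, and your reduction to $p^2\mid\mathfrak{r}$ destroys this. (Note that Proposition~\ref{mu admissible proposition}, which treats $V_\Sigma$ with the weaker power $p^d$, only yields the weak box density, not the height density — this asymmetry is the whole point.) The final deduction from Proposition \ref{prop:symbol delta admissible} is fine once admissibility is established correctly.
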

\begin{proof}
    The second assertion follows from the first and Proposition~\ref{prop:symbol delta admissible}.  For $x>1$, $W_{>Z}'(x)$
    is the set of integer tuples $(A_0,\ldots,A_d,B_0,\ldots,B_d)\in [-x,x]^{2d+2}$ such that 
$$p^{2d}\mid \mathfrak{r}(A_0,\ldots,B_d)$$
for some prime $p>Z$. 
    We need to show that 
    \[\lim_{Z\rightarrow \infty} \left(\limsup_{x\rightarrow \infty} \frac{\# W_{>Z}'(x)}{(2x)^{2d+2}}\right)=0.\] 
    
    Let $\displaystyle\mathfrak{s}=\frac{\partial\mathfrak{r}}{\partial x_0}$. We partition $W_{>Z}'(x)$ into disjoint sets $S_{>Z}(x)$ and $T_{>Z}(x)$, where $S_{>Z}(x)$ consists of $(A_0, \dots, A_d, B_0, \dots, B_d)\in W_{>Z}'(x)$ such that there exists $p>Z$ for which 
    \[p^{2d}\mid\mathfrak{r}(A_0, \dots, B_d)\text{ and }p\mid\mathfrak{s}(A_0, \dots, B_d)\] and 
    \[T_{>Z}(x):=W_{>Z}'(x)\setminus S_{>Z}(x).\]
    It follows from \cite[Lemma~5.1]{Poonensquarefree} that 
    \[\lim_{Z\rightarrow\infty}\left(\limsup_{x\rightarrow \infty} \frac{\# S_{>Z}(x)}{(2x)^{2d+2}}\right)=0.\]
    Now we aim to show that
    \[\lim_{Z\rightarrow \infty} \left(\limsup_{x\rightarrow \infty} \frac{\# T_{>Z}(x)}{(2x)^{2d+2}}\right)=0.\]
    
    Let $Q=(A_1, \dots, A_d, B_0, \dots, B_{d})\in \Z^{2d+1}\cap [-x, x]^{2d+1}$ and consider the $1$-variable polynomial $h_Q(u)$ of degree at most $d$ defined by 
    \[h_Q(u):=\mathfrak{r}(u, A_1, \dots, A_d, B_0, \dots , B_{d-1}, B_d)\in \Z[u],\] and note that 
    \[h_Q'(u)=\mathfrak{s}(u, A_1, \dots, A_d, B_0, \dots, B_{d-1},B_d).\]
    We count the number of choices of $A_0\in \Z\cap [-x, x]$ such that $p^{2d}| h_Q(A_0)$ and $p\nmid h_Q'(A_0)$ for some $p>Z$. We may assume that $h_Q(u)$ and $h_Q'(u)$ are non-zero polynomials; otherwise there does not exist any such $A_0$. Once $Q$ and $p$ are fixed, the number of residue classes of $A_0\mod{p}$ for which $p|h_Q(A_0)$ is $\leq \deg(h_Q)\leq d$. On the other hand, since $p\nmid h_Q'(A_0)$, it follows from Lemma~\ref{partial derivative lemma}   that each residue class $A_0\mod{p}$ lifts to a unique residue class $A_0\mod{p^{2d}}$ such that $p^{2d}|h_Q(A_0)$. Thus, there are $\leq d$ residue classes for $A_0\mod{p^{2d}}$ such that $p^{2d}|h_Q(A_0)$ and $p\nmid h_Q'(A_0)$. Thus, the number of values of $A_0$ in $\Z\cap [-x, x]$ is at most $\displaystyle d\left(\frac{2x}{p^{2d}}+1\right)$. 
    
    Since 
    $(A_0, \dots, B_d)\in [-x, x]^{2d+2}$ and $\mathfrak{r}$ has degree $2d$ it follows that 
    \[|\mathfrak{r}(A_0, \dots, B_d)|\leq \gamma_3 x^{2d}.\] There are at most $d$ many values of $A_0$ for which $h_Q(A_0)=0$. When $h_Q(A_0)\neq 0$, we have: 
    \[p^{2d}\leq |\mathfrak{r}(A_0, \dots, B_d)|\leq \gamma_3 x^{2d}.\]
    Thus, we find that $Z<p\leq \sqrt[2d]{\gamma_3} x$. The total number of choices for $Q$ is $< (2x+2)^{2d+1}<2\cdot (2x)^{2d+1}$ when $x$ is sufficiently large. We have shown that \[\begin{split} & \# T_{>Z}(x) \\ 
    \leq & \sum_{Z<p< \sqrt[2d]{\gamma_3}x} 2\cdot(2x)^{2d+1}\cdot\left(\frac{4d x}{p^{2d}}+ 2d\right) \\  
    =& O\left(x^{2d+2} \sum_{Z<p} \frac{1}{p^{2d}}\right)+O\left(x^{2d+1}\sum_{Z<p< \sqrt[2d]{\gamma_3}x} 1\right) \\
    = & O\left(x^{2d+2} \left(\sum_{Z<p} \frac{1}{p^{2d}}+\frac{1}{\log x}\right)\right). \end{split}\]
    Hence, we find that 
    \[\limsup_{x\rightarrow \infty} \frac{\#T_{>Z}(x)}{(2x)^{2d+2}}= O\left( \sum_{Z<p} \frac{1}{p^{2d}}\right),\] and conclude that
     \[\lim_{Z\rightarrow \infty} \left(\limsup_{x\rightarrow \infty} \frac{\# T_{>Z}(x)}{(2x)^{2d+2}}\right)=0.\]
     This completes the proof of the result. 
\end{proof}

The set $V_{\Sigma}:=\left(\bigcap_{p\in\Sigma} G_p\right)\cap\left(\bigcap_{p\notin\Sigma} N_p^{(d)}\right)$ is the affine lifting of $\V_{\Sigma}$. Let 
$$V_{>Z}'=\bigcup_{p>Z} (N_p^{(d)})'.$$
\begin{proposition}\label{mu admissible proposition}
    Let $n$ be an integer in $[1,2d+2]$. The set $V_{\Sigma}$ is $\mu_n^*$-admissible (cf.~Definition~\ref{def:admissibility conditions}). Therefore
    $$\mu_n(\V_{\Sigma})=\prod_{p\in\Sigma}\pi(\cG_p)\prod_{p\notin\Sigma}\pi(\cN_p^{(d)}).$$
\end{proposition}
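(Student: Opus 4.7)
The plan is to model this proof on that of Proposition~\ref{delta admissible proposition}, adapted to the weak box density $\mu_n^*$ and to the weaker threshold $p^d$ in place of $p^{2d}$. The second assertion will follow from the first via Proposition~\ref{prop:symbol delta admissible}, so the task reduces to establishing that $\lim_{Z\to\infty}\overline{\mu_n^*}(V'_{>Z})=0$.

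To analyze $V'_{>Z}(\vec{r})$, I would fix $\mathfrak{s}$ to be the partial derivative of $\mathfrak{r}$ with respect to the $n$-th coordinate and partition $V'_{>Z}(\vec{r})$ into a set $S_{>Z}(\vec{r})$ consisting of tuples $Q$ for which some witnessing prime $p>Z$ satisfies \emph{both} $p^d\mid\mathfrak{r}(Q)$ and $p\mid\mathfrak{s}(Q)$, together with its complement $T_{>Z}(\vec{r})$ inside $V'_{>Z}(\vec{r})$. For $S_{>Z}$, absolute irreducibility of $\mathfrak{r}$ (Proposition~\ref{f irred}) guarantees $\mathfrak{r}\nmid\mathfrak{s}$, so the common zero locus has codimension at least $2$; applying the box-density version of Poonen's \cite[Lemma~5.1]{Poonensquarefree}, exactly as in the proof of Proposition~\ref{delta admissible proposition}, should yield $\lim_{Z\to\infty}\overline{\mu_n^*}(S_{>Z})=0$.

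The heart of the argument is the bound on $T_{>Z}(\vec{r})$. After relabeling so that the free coordinate is the first one, write a tuple as $(A_0,Q')$ and consider, for each fixed $Q'$, the one-variable specialization $h_{Q'}(u):=\mathfrak{r}(u,Q')\in\Z[u]$, which has degree at most $d$ in $u$. By Lemma~\ref{partial derivative lemma}, for each prime $p$ with $p\nmid h_{Q'}'$, every residue class $A_0\bmod p$ satisfying $p\mid h_{Q'}(A_0)$ lifts to a \emph{unique} residue class modulo $p^d$ with $p^d\mid h_{Q'}(A_0)$, giving at most $d(2r_1/p^d+1)$ admissible $A_0\in[-r_1,r_1]$ per prime $p$. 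Since $\mathfrak{r}$ has degree $d$ in each variable, setting $R=\max_{i\neq 1}r_i$ yields $|h_{Q'}(A_0)|\leq \gamma_3\, r_1^d R^d$, and hence $p\leq\gamma_3^{1/d}\,r_1 R$ whenever $h_{Q'}(A_0)\neq 0$. Summing over $Q'$ and over primes and dividing by $\#\op{Box}(\vec{r})\asymp r_1\prod_{i\neq 1}r_i$, I expect an estimate of the shape
\[
\frac{\#T_{>Z}(\vec{r})}{\#\op{Box}(\vec{r})}\;\ll\;\sum_{p>Z}\frac{1}{p^d}\;+\;\frac{\pi(\gamma_3^{1/d}\,r_1R)}{r_1},
\]
where the second term comes from the $+1$ in the Hensel count and is controlled by the prime number theorem.

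Taking $\limsup_{r_1\to\infty}$ with $R$ fixed kills the second term, and the remaining bound, independent of $r_i$ for $i\neq 1$, survives the outer $\limsup$; finally letting $Z\to\infty$ sends $\sum_{p>Z}p^{-d}\to 0$ since $d\geq 2$ (this is precisely where the weaker hypothesis $p^d$, rather than $p^{2d}$, still suffices). The main technical obstacle I anticipate is careful bookkeeping of the iterated-limit structure of $\overline{\mu_n^*}$, together with verifying that the box-density version of \cite[Lemma~5.1]{Poonensquarefree} genuinely delivers the required control on $S_{>Z}$ in the weak-box regime where only one coordinate is sent to infinity first; both points should be routine but deserve explicit attention.
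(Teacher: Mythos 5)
Your approach is essentially the same as the paper's: both use the partition of $V'_{>Z}(\vec{r})$ into $S_{>Z}(\vec{r})$ (handled by \cite[Lemma~5.1]{Poonensquarefree}) and $T_{>Z}(\vec{r})$, and both treat $T_{>Z}$ by freezing the $2d+1$ coordinates other than the $n$-th, specializing $\mathfrak{r}$ to a one-variable polynomial $h_Q$ of degree $\le d$, and applying Lemma~\ref{partial derivative lemma} together with a prime-counting estimate. Your worry about whether Poonen's Lemma~5.1 transfers to the weak-box regime is unfounded: since the iterated $\limsup$ (inner in $r_n$, then outer) is bounded above by the joint $\limsup$ over all $r_i\to\infty$, control in the full-box sense automatically gives control in every $\mu_n^*$ sense.

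There is, however, one genuine missing step. Your bound on $T_{>Z}(\vec{r})$ relies on the inequality $p^d \le |h_{Q'}(A_0)|$, which you correctly flag as valid only ``whenever $h_{Q'}(A_0)\neq 0$''. But tuples with $\mathfrak{r}(Q)=0$ can lie in $T_{>Z}(\vec{r})$ (they automatically satisfy $p^d\mid\mathfrak{r}(Q)$ for every prime, and they land in $T_{>Z}$ rather than $S_{>Z}$ precisely when $\mathfrak{s}(Q)$ is $Z$-smooth), and for such tuples you get no bound on the witnessing prime, so they are not accounted for by your $\sum_{p>Z}p^{-d}+\pi(\cdot)/r_1$ estimate. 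The paper closes this gap by further partitioning $T_{>Z}(\vec{r})$ into $A_{>Z}(\vec{r})$ (the tuples on the resultant hypersurface) and $B_{>Z}(\vec{r})$, and disposing of $A_{>Z}$ via the Schwartz--Zippel Lemma~\ref{Schzipp lemma}, which shows its box density is $0$; your prime-counting argument then correctly handles $B_{>Z}$. This extra partition also takes care of the degenerate subcase where the specialization $h_{Q'}$ is identically zero, which would otherwise contribute an entire line of $A_0$-values for each bad $Q'$. Once you insert this step, your argument matches the paper's.
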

\begin{proof}
    The second assertion follows from the first and Proposition~\ref{prop:symbol delta admissible}.  For $\vec{r}=(r_1,\ldots,r_{2d+2})$ a tuple of (sufficiently large) positive real numbers, $V_{>Z}'(\vec{r})$
    is the set of integer tuples $(C_1,\ldots,C_{2d+2})=(A_0,\ldots,A_d,B_0,\ldots,B_d)\in \op{Box}(\vec{r})$ such that 
$$p^{d}\mid \mathfrak{r}(C_1,\ldots,C_{2d+2})$$
for some prime $p>Z$. We need to show that 
    \[\lim_{Z\rightarrow \infty} \left(\limsup_{r_1, \dots, \hat{r}_n, \dots, r_{2d+2}\rightarrow\infty} \limsup_{r_{n}\rightarrow \infty}\frac{\# V_{>Z}'(\vec{r})}{\# \op{Box}(\vec{r})}\right)=0.\] 
    
    The proof follows a similar pattern to the proof of the previous proposition. Let 
    $\displaystyle\mathfrak{s}$ be the partial derivative 
    of $\mathfrak{r}$ with respect to the $n$-th variable. 
    We partition $V_{>Z}'(\vec{r})$ into disjoint sets $S_{>Z}(\vec{r})$ and $T_{>Z}(\vec{r})$, where $S_{>Z}(\vec{r})$ consists of $(C_1, \dots, C_{2d+2})\in V_{>Z}'(\vec{r})$ such that there exists $p>Z$ for which 
    \[p^d\mid\mathfrak{r}(C_1, \dots, C_{2d+2})\text{ and }p\mid\mathfrak{s}(C_1, \dots, C_{2d+2})\] and 
    \[T_{>Z}(\vec{r}):=V_{>Z}'(\vec{r})\setminus S_{>Z}(\vec{r}).\]

    It follows from \cite[Lemma~5.1]{Poonensquarefree} that
    $$\lim_{Z\rightarrow \infty} \left(\limsup_{r_1, \dots, r_{2d+2}\rightarrow\infty} \frac{\# S_{>Z}(\vec{r})}{\# \op{Box}(\vec{r})}\right)=0,\ \text{hence}$$
    $$\lim_{Z\rightarrow \infty} \left(\limsup_{r_1, \dots, \hat{r}_n, \dots, r_{2d+2}\rightarrow\infty} \limsup_{r_{n}\rightarrow \infty} \frac{\# S_{>Z}(\vec{r})}{\# \op{Box}(\vec{r})}\right)=0.$$
    Now we aim to show that
    $$\lim_{Z\rightarrow \infty} \left(\limsup_{r_1, \dots, \hat{r}_n, \dots, r_{2d+2}\rightarrow\infty} \limsup_{r_{n}\rightarrow \infty} \frac{\# T_{>Z}(\vec{r})}{\# \op{Box}(\vec{r})}\right)=0.$$
    
    \par Partition $T_{>Z}(\vec{r})$ into disjoint sets $A_{>Z}(\vec{r})$ and $B_{>Z}(\vec{r})$, where $A_{>Z}(\vec{r})$ consists of $(C_1, \dots, C_{2d+2})\in T_{>Z}(\vec{r})$ for which 
    \[\mathfrak{r}(C_1, \dots, C_{2d+2})=0.\] It follows from Lemma~\ref{Schzipp lemma} that 
    \[\lim_{r_1, \dots, r_{2d+2}\rightarrow \infty} \frac{\#A_{>Z}(\vec{r}) }{\# \op{Box}(\vec{r})}=0.\]
   Thus it remains to show that 
    \[\lim_{Z\rightarrow \infty}\left(\limsup_{r_1, \dots, \hat{r}_n, \dots, r_{2d+2}\rightarrow\infty} \limsup_{r_{n}\rightarrow \infty}  \frac{\#B_{>Z}(\vec{r}) }{\# \op{Box}(\vec{r})}\right)=0.\]
    
    Let \[Q=(C_1, \dots, C_{n-1}, C_{n+1}, \dots, C_{2d+2})\in \op{Box}(r_1, \dots, r_{n-1}, r_{n+1}, \dots, r_{2d+2})\cap \Z^{2d+1}\] and consider the $1$-variable polynomial $h_Q(x)$ of
    degree at most $d$ defined by 
    \[h_Q(x):=\mathfrak{r}(C_1, \dots, C_{n-1}, x, C_{n+1}, \dots , C_{2d+2})\in \Z[x],\] and note that 
    \[h_Q'(x)=\mathfrak{s}(C_1, \dots, C_{n-1}, x, C_{n+1}, \dots , C_{2d+2}).\]  Given $r_{n}>0$, 
    from the definition of $S_{>Z}(\vec{r})$, $T_{>Z}(\vec{r})$, $A_{>Z}(\vec{r})$, and 
    $B_{>Z}(\vec{r})$, 
    we need to count the number of $C_n\in \Z\cap [-r_{n}, r_{n}]$ such that $p^d\mid h_Q(C_n)$, $h_Q(C_n)\neq 0$, and $p\nmid h_Q'(C_n)$. We may assume that $h_Q(x)$ and $h_Q'(x)$ are non-zero polynomials; otherwise there does not exist such a $C_n$. 
    
    Once $Q$ is fixed, the number of residue classes of $C_n\mod{p}$ for which $p\mid h_Q(x)$ is $\leq d$. On the other hand, since $p\nmid h_Q'(C_n)$, it follows from Lemma~\ref{partial derivative lemma} that each residue class $C_n\mod{p}$ lifts to a unique residue class $C_n\mod{p^d}$ such that $p^d\mid h_Q(C_n)$. Thus, there are $\leq d$ residue classes $C_n\mod{p^d}$ such that $p^d\mid h_Q(C_n)$ and $p\nmid h_Q'(C_n)$. Thus, the number of values of $C_n$ in $\Z\cap [-r_{n}, r_{n}]$ is at most $\displaystyle d\left(\frac{2r_{n}}{p^d}+1\right)$. 
    
    There exists a positive constant $\Gamma$ depending only on $d$ and the $r_i$s with $1\leq i\leq 2d+2$ and $i\neq n$ such that 
    \[|h_Q(x)|\leq \Gamma r_n^d\] 
    for every $x\in[-r_n,r_n]$.
    On the other hand, since $p^d\mid h_Q(C_n)$ 
    and $h_Q(C_n)\neq 0$, we have 
    \[p^d\leq |h_Q(C_n)|\leq \Gamma r_n^d.\]
    Thus, we find that $Z<p\leq \Gamma^{1/d} r_{n}$. The total number of choices for $Q$ is less than $2\cdot\prod_{i\neq n} (2r_i)$ when the $r_i$s are sufficiently large. We have shown that \[\begin{split}\#B_{>Z}(\vec{r})\leq & \sum_{Z<p\leq \Gamma^{1/d} r_n} \left(2\cdot\prod_{i\neq n} (2r_i)\left(\frac{2d r_{n}}{p^d}+ d\right)\right)  \\
    \leq & 2d\prod_{i=1}^{2d+2} (2r_i)\sum_{Z<p} \frac{1}{p^d}+ 2d\prod_{i=1}^{2d+2} (2r_i) \sum_{Z<p\leq \Gamma^{1/d}r_n} \frac{1}{2r_n}\\
     \leq & 2d\prod_{i=1}^{2d+2} (2r_i)\sum_{Z<p} \frac{1}{p^d}+ 2d\prod_{i=1}^{2d+2} (2r_i) \frac{\Gamma^{1/d}}{\log(\Gamma^{1/d}r_{n})},\\
    \end{split}\]
    where we use the fact that the number of primes $Z<p\leq \Gamma^{1/d}r_n$ is less than
    $2\Gamma^{1/d}r_n/\log(\Gamma^{1/d}r_n)$ when $r_n$ is large.

     We recall that $\Gamma$ is independent of $r_n$. The above estimate implies
    $$\limsup_{r_1, \dots, \hat{r}_n, \dots, r_{2d+2}\rightarrow\infty} \limsup_{r_{n}\rightarrow \infty}\frac{\#B_{>Z}(\vec{r})}{\# \op{Box}(\vec{r})}\leq 2d\sum_{Z<p}\frac{1}{p^d}.$$
    Therefore
    \[\lim_{Z\rightarrow \infty}\left(\limsup_{r_1, \dots, \hat{r}_n, \dots, r_{2d+2}\rightarrow\infty} \limsup_{r_{n}\rightarrow \infty}  \frac{\#B_{>Z}(\vec{r}) }{\# \op{Box}(\vec{r})}\right)=0\]
    and we finish the proof. 
\end{proof}

\begin{notn}
   Let $\Sigma_1=\{p\in\Sigma:\ p<2d\}$, $\Sigma_2=\Sigma\setminus\Sigma_1$, and $\Omega=\{p\notin\Sigma:\ p^2\leq \gamma_2\}$. 
\end{notn}

    \begin{theorem}\label{main thm of section 4 Wsigma}
        We have the following positive lower bound on $\mathfrak{d}(\cW_{\Sigma})$:
        \begin{equation}\label{d(cSSigma}\mathfrak{d}(\cW_\Sigma)\geq \prod_{p\notin \Sigma\cup \Omega}\left(1-\frac{\gamma_2}{p^2}\right)\times \left(\prod_{p\in \Omega\cup \Sigma_1}\frac{1}{p^{2d}}\right)\times \prod_{p\in \Sigma_2}\left(\frac{p^{(2d+2)}-2d p^{2d+1}}{p^{(2d+2)}-1}\right).\end{equation}
    \end{theorem}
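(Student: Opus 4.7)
The plan is to apply Proposition \ref{delta admissible proposition} to convert $\mathfrak{d}(\cW_\Sigma)$ into an Euler product of local densities, and then bound each local factor using the preceding subsection, partitioning the primes into the four classes $\Sigma_1$, $\Sigma_2$, $\Omega$, and $\{p:p\notin\Sigma\cup\Omega\}$.

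More concretely, I would first write
\[\mathfrak{d}(\cW_\Sigma)=\prod_{p\in\Sigma}\pi(\cG_p)\cdot\prod_{p\notin\Sigma}\pi(\cN_p^{(2d)})\]
via Proposition \ref{delta admissible proposition}, and then treat the four cases as follows. For $p\in\Sigma_1$ I would invoke the universal lower bound $\pi(\cG_p)\geq p^{-2d}$ from part (1) of Corollary \ref{cor:pi Gp lemma}. For $p\in\Sigma_2$, since $2d$ is composite whenever $d\geq 2$ we automatically have $p>2d$; combining part (3) of Lemma \ref{2d p to the 2d+1} with the identity \eqref{eq:pi(cG_p)} then yields $\pi(\cG_p)\geq(p^{2d+2}-2dp^{2d+1})/(p^{2d+2}-1)$. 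For $p\in\Omega$ (so $p\notin\Sigma$ but $p^2\leq\gamma_2$), the inclusion $\cG_p\subseteq\cN_p^{(2d)}$ together with the same crude bound delivers $\pi(\cN_p^{(2d)})\geq p^{-2d}$. Finally, for $p\notin\Sigma\cup\Omega$ I would apply Proposition \ref{prop:gamma2} with $k=2d$ (which is valid since $d\geq 2$ ensures $k\geq 2$) to obtain $\pi(\cN_p^{(2d)})=1-\pi((\cN_p^{(2d)})')\geq 1-\gamma_2/p^2$. Assembling these four estimates reproduces the product displayed on the right-hand side of \eqref{d(cSSigma}.

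The only point requiring care is the verification that the resulting lower bound is genuinely positive, although this is routine rather than a serious obstacle. The finite part indexed by $\Sigma_1\cup\Sigma_2\cup\Omega$ is a product of finitely many strictly positive numbers, where the positivity of each $\Sigma_2$ factor hinges precisely on the inequality $p>2d$ noted above. For the infinite product $\prod_{p\notin\Sigma\cup\Omega}(1-\gamma_2/p^2)$, every factor lies in $(0,1)$ because $p^2>\gamma_2$ for such $p$, and convergence to a strictly positive limit follows from $\sum_p p^{-2}<\infty$. In short, the theorem is essentially a packaging step: all the analytic content has already been invested in Proposition \ref{delta admissible proposition}, Lemma \ref{2d p to the 2d+1}, and Proposition \ref{prop:gamma2}, and the work here is to feed those inputs into the product formula in the sharpest way available on each class of primes.
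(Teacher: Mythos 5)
Your proposal is correct and follows essentially the same route as the paper's proof: invoke Proposition~\ref{delta admissible proposition} to obtain the Euler product, then bound each local factor class-by-class using Lemma~\ref{2d p to the 2d+1}, \eqref{eq:pi(cG_p)}, \eqref{eq:picNpk1 < picNpk2} (equivalently $\cG_p\subseteq\cN_p^{(2d)}$), and Proposition~\ref{prop:gamma2}. The only cosmetic difference is that you treat $\Sigma_1$ and $\Omega$ separately where the paper handles $\Omega\cup\Sigma_1$ in one step, and you make explicit the (correct) observation that $p>2d$ for $p\in\Sigma_2$ since $2d$ is never prime for $d\geq 2$.
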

    \begin{proof}
        For $p\in\Sigma_2$, from Lemma~\ref{2d p to the 2d+1} and \eqref{eq:pi(cG_p)}, we have:
        $$\pi(\cG_p)\geq \frac{p^{(2d+2)}-2d p^{2d+1}}{p^{(2d+2)}-1}.$$
        For $p\in \Omega\cup\Sigma_1$, from Lemma~\ref{2d p to the 2d+1}, \eqref{eq:pi(cG_p)}, and
        \eqref{eq:picNpk1 < picNpk2}, we have:
        $$\frac{1}{p^{2d}}\leq \pi(\cG_p)\leq \pi(\cN_p^{(2d)}).$$
        For $p\notin\Sigma\cup\Omega$, from Proposition~\ref{prop:gamma2}, we have:
        $$\pi(\cN_p^{(2d)})=1-\pi((\cN_p^{(2d)})')\geq 1-\frac{\gamma_2}{p^2}.$$
        Combining the above inequalities with Proposition~\ref{delta admissible proposition}, we obtain the desired result.
    \end{proof}

\begin{theorem}\label{main thm of section 4}
        Let $n$ be an integer in $[1,2d+2]$. We have the following positive lower bound on
        $\mu_n(\V_{\Sigma})$:
        \begin{equation}\label{mun(cSSigma}\mu_n(\V_\Sigma)\geq \prod_{p\notin \Sigma\cup \Omega}\left(1-\frac{\gamma_2}{p^2}\right)\times \left(\prod_{p\in \Omega\cup \Sigma_1}\frac{1}{p^{2d}}\right)\times \prod_{p\in \Sigma_2}\left(\frac{p^{(2d+2)}-2d p^{2d+1}}{p^{(2d+2)}-1}\right).\end{equation}
    \end{theorem}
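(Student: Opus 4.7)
The plan is to mirror the proof of Theorem \ref{main thm of section 4 Wsigma} almost verbatim, replacing the $\mathfrak{d}$-admissibility machinery with its $\mu_n^*$ analogue. First I would invoke Proposition \ref{mu admissible proposition} to get the product formula
\[
\mu_n(\V_\Sigma)=\prod_{p\in\Sigma}\pi(\cG_p)\prod_{p\notin\Sigma}\pi(\cN_p^{(d)}),
\]
which reduces the problem to finding a good lower bound for each local factor.

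Next I would partition the primes into the three classes $\Sigma_2$, $\Omega\cup\Sigma_1$, and the complement $\{p:p\notin\Sigma\cup\Omega\}$, and bound the corresponding factors separately. For $p\in\Sigma_2$ (large primes in $\Sigma$) the bound on $\pi(\cG_p)$ comes directly from Lemma \ref{2d p to the 2d+1}(3) together with \eqref{eq:pi(cG_p)}, giving the factor $\frac{p^{2d+2}-2dp^{2d+1}}{p^{2d+2}-1}$. For $p\in\Omega\cup\Sigma_1$ (small primes) I would use the crude uniform lower bound from Corollary \ref{cor:pi Gp lemma}(1), namely $\pi(\cG_p)\geq 1/p^{2d}$, and note that by \eqref{eq:picNpk1 < picNpk2} the same bound applies to $\pi(\cN_p^{(d)})$ since $\cG_p=\cN_p^{(1)}\subseteq\cN_p^{(d)}$; this handles both subcases (whether $p$ is in $\Sigma_1$ or $\Omega$) uniformly.

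The only new input needed is the bound for $p\notin\Sigma\cup\Omega$, where Proposition \ref{prop:gamma2} applies with $k=d$ (this is legitimate because $d\geq 2$, so $d$ lies in the allowed range $[2,2d]$ of that proposition). This yields $\pi(\cN_p^{(d)})=1-\pi((\cN_p^{(d)})')\geq 1-\gamma_2/p^2$. Multiplying the three families of local factors together gives the desired lower bound \eqref{mun(cSSigma}; positivity follows since the infinite product over $p\notin\Sigma\cup\Omega$ converges (the tail sum $\sum 1/p^2$ is summable) and each finite factor from the other two families is strictly positive.

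I do not expect any real obstacle here, since the entire mechanism — the admissibility framework of Section 3 and the local bounds of this subsection — was engineered to handle both $\V_\Sigma$ and $\cW_\Sigma$ in parallel. The only point that could superficially appear delicate is applying Proposition \ref{prop:gamma2} at $k=d$ rather than $k=2d$, but the proposition is already stated uniformly for $2\leq k\leq 2d$, so this is automatic.
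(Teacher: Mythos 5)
Your argument is correct and matches the paper's proof, which simply states that the argument is completely similar to that of Theorem~\ref{main thm of section 4 Wsigma} with Proposition~\ref{mu admissible proposition} as the key input; you have spelled out precisely the three families of local bounds (Lemma~\ref{2d p to the 2d+1}(3) with \eqref{eq:pi(cG_p)} for $\Sigma_2$, Corollary~\ref{cor:pi Gp lemma}(1) with \eqref{eq:picNpk1 < picNpk2} for $\Omega\cup\Sigma_1$, and Proposition~\ref{prop:gamma2} at $k=d$ for the remaining primes) exactly as the paper intends.
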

\begin{proof}
    The proof is completely similar to that of Theorem~\ref{main thm of section 4} with the key input being Proposition~\ref{mu admissible proposition}.
\end{proof}

\section{Rational maps of degree $2$}\label{s 5}
In this section, we illustrate that some direct computations can be done when the degree is small  (for instance when the degree is $2$) in order to provide an explicit lower bound for the weak box densities of globally minimal rational maps.

For a homogeneous polynomial $R(X_0,\ldots,X_m)\in \Z[X_0,\ldots,X_m]$, a prime $p$, positive integers $1\leq i\leq n$, and a point $P\in\mathbb{P}^{m}(\Z/p^n\Z)$, the statement that 
$R(P)=0\bmod p^i$ is well-defined. This follows from the homogeneity of $R$ and the fact that any two liftings of $P$ in $\mathbb{A}^{m+1}(\Z/p^n\Z)$ differ (multiplicatively) by a unit of $\Z/p^n\Z$. We have:

\begin{lemma}\label{lem:A_p B_p S_p}
    Let $m$ be a positive integer, $R(X_0,\ldots,X_m)\in\Z[X_0,\ldots,X_m]$ a homogeneous polynomial of degree at least $2$. For each prime $p$, let
    \begin{align*}
        \bar{A}_p&=\{P\in(\Z/p\Z)^{m+1}:\ R(P)\neq 0\}\ \text{and}\\
        \bar{B}_p&=\left\{P\in(\Z/p\Z)^{m+1}:\ R(P)=0\ \text{and}\  \frac{\partial R}{\partial X_i}(P)\neq 0\ \text{for some $0\leq i\leq m$}\right\}.
    \end{align*}
    Then the set $\bar{\mathcal{S}}_p:=\{P\in\mathbb{P}^{m}(\Z/p^2\Z):\ R(P)\neq 0\bmod p^2\}$
    has
    $$|\bar{\mathcal{S}}_p|=\frac{p^{m+1}|\bar{A}_p|+(p^{m+1}-p^m)|\bar{B}_p|}{p(p-1)}.$$
\end{lemma}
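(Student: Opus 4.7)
The plan is to pass from projective points of $\mathbb{P}^m(\Z/p^2\Z)$ to their affine representatives, and then stratify by reduction modulo $p$.

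First, each projective point $P \in \mathbb{P}^m(\Z/p^2\Z)$ admits exactly $p(p-1) = |(\Z/p^2\Z)^*|$ affine representatives $Q \in (\Z/p^2\Z)^{m+1}$ having at least one unit coordinate, because the scaling action of $(\Z/p^2\Z)^*$ on such tuples is free (if $uQ_i = Q_i$ for a unit $Q_i$, then $u=1$). Since $R$ is homogeneous, whether $R(Q) \not\equiv 0 \bmod p^2$ depends only on the class $[Q]$, so it suffices to count those $Q \in (\Z/p^2\Z)^{m+1}$ satisfying (a) some coordinate is a unit and (b) $R(Q) \not\equiv 0 \bmod p^2$, and then divide the total by $p(p-1)$.

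I next stratify the affine count by the reduction $\bar{Q} \in (\Z/p\Z)^{m+1}$. Since $R$ is homogeneous of degree at least $2$ we have $R(0)=0$, so every $\bar{Q}$ in $\bar{A}_p$ or $\bar{B}_p$ has a nonzero coordinate, ensuring that each of its $p^{m+1}$ lifts automatically satisfies (a). If $\bar{Q} \in \bar{A}_p$ then (b) also holds for every lift, contributing $p^{m+1}|\bar{A}_p|$ admissible tuples. If $\bar{Q} \in \bar{B}_p$, write each lift as $Q = Q_0 + pQ_1$ with $Q_0$ a fixed integer lift and $Q_1 \in (\Z/p\Z)^{m+1}$, and apply the Taylor expansion
\[
R(Q) \equiv R(Q_0) + p \sum_{i=0}^{m} \frac{\partial R}{\partial X_i}(\bar{Q})\,(Q_1)_i \pmod{p^2}.
\]
Writing $R(Q_0) = p r_0$, the condition $R(Q) \equiv 0 \bmod p^2$ becomes a single nontrivial $\mathbb{F}_p$-linear equation in $(Q_1)_0,\ldots,(Q_1)_m$, which has exactly $p^m$ solutions because at least one $\partial R/\partial X_i(\bar{Q})$ is a unit of $\mathbb{F}_p$. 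Hence $p^{m+1} - p^m$ lifts of $\bar{Q}$ satisfy (b), contributing $(p^{m+1}-p^m)|\bar{B}_p|$.

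Adding the two contributions and dividing by $p(p-1)$ yields the claimed formula. The step requiring the most care is ruling out any further contribution from the remaining $\bar{Q}$, namely those with $R(\bar{Q}) \equiv 0 \bmod p$ and every partial derivative of $R$ vanishing at $\bar{Q}$ modulo $p$. For such singular $\bar{Q}$, the linear correction in the Taylor expansion drops out identically, so $R(Q) \bmod p^2$ is independent of the chosen lift $Q$; one must then verify that this common residue is $\equiv 0 \bmod p^2$, so that no lift contributes to $\bar{\mathcal{S}}_p$. I expect this to be the main obstacle, to be handled either by a direct Euler-relation argument or by invoking the specific structure of the homogeneous polynomial $R$ appearing in the intended applications.
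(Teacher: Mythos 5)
Your approach matches the paper's: stratify the $p(p-1)$ affine representatives of each point of $\mathbb{P}^m(\Z/p^2\Z)$ by their reduction modulo $p$, count the $p^{m+1}$ admissible lifts from each $\bar{A}_p$-point and the $p^{m+1}-p^m$ lifts from each $\bar{B}_p$-point that avoid $R\equiv 0 \bmod p^2$ (the latter by the Taylor/Hensel argument which the paper delegates to Lemma~\ref{partial derivative lemma}), then divide by $p(p-1)$. Those two contributions are handled correctly and agree with the paper.

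The step you flag as ``the main obstacle'' is a genuine gap, and your hesitation is well-founded: Taylor expansion only shows that $R$ modulo $p^2$ is \emph{constant} on the lifts of a singular residue $\bar Q$ (one with $R(\bar Q)=0$ and all $\partial R/\partial X_i(\bar Q)=0$), not that the constant is $0$. In fact the lemma is false as stated. Take $m=1$, $R=X_0^2+X_0X_1+X_1^2$, $p=3$. Modulo $3$ one has $\bar R=(X_0-X_1)^2$ and both partials reduce to $\pm(X_0-X_1)$, so $\bar B_3=\emptyset$ and $|\bar A_3|=6$, and the formula predicts $(9\cdot 6)/(3\cdot 2)=9$. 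But for coprime $(a,b)$ one never has $9\mid a^2+ab+b^2$ (use $4(a^2+ab+b^2)=(2a+b)^2+3b^2$), so $|\bar{\mathcal S}_3|=|\mathbb{P}^1(\Z/9\Z)|=12\neq 9$; the $3$ missing points come from the lifts of the singular residues $(1,1),(2,2)$, where $R\equiv 3\not\equiv 0\bmod 9$. The Euler relation alone does not rescue the argument — in this example $p\nmid\deg R$ and the Euler identity is consistent with the failure. The lemma therefore needs the extra hypothesis that $R(P_0)\equiv 0\bmod p^2$ for every integer lift $P_0$ of every singular $\F_p$-point of $\bar R$ (equivalently, one must add to the right-hand side the term $p^{m+1}|\bar C_p|/(p(p-1))$, where $\bar C_p$ counts the nonzero singular residues for which the well-defined value of $R$ modulo $p^2$ is nonzero). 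Note that the paper's own proof asserts precisely the missing step (``using Taylor expansion \ldots we have $R(P')=0\bmod p^2$''), so you have located a gap that is present in the paper as well. Because omitting the extra term only underestimates $|\bar{\mathcal S}_p|$, the lower bound in Theorem~\ref{d=2 main thm} survives, but the asserted equality requires checking the additional hypothesis for the specific resultant polynomial $\mathfrak{r}$ at each prime used.
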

\begin{proof}
    Each point of $\bar{A}_p$ lifts to $p^{m+1}$ many points in $(\Z/p^2\Z)^{m+1}$. 
    By Lemma~\ref{partial derivative lemma}, each point of $\bar{B}_p$ lifts to exactly $p^{m+1}-p^m$ many 
    points $P'\in (\Z/p^2\Z)^{m+1}$ such that $R(P')\neq 0\bmod p^2$. 
    
    Since $R$ is homogeneous of degree at least $2$, the set of all the above liftings is invariant under multiplication by elements of $(\Z/p^2\Z)^\times$ and every lifting has an entry that is in $(\Z/p^2\Z)^\times$. Hence these liftings yield exactly $(p^{m+1}|\bar{A}_p|+(p^{m+1}-p^m)|\bar{B}_p|)/(p(p-1))$ many points in $\mathbb{P}^m(\Z/p^2\Z)$. Finally, we note that if $P\in(\Z/p\Z)^{m+1}$ such that $R(P)=0$ and $\displaystyle\frac{\partial R}{\partial X_i}(P)=0$ for every $i$ then using Taylor expansion
    as in the proof of Lemma~\ref{partial derivative lemma} we have $R(P')=0\bmod p^2$
    for every lifting $P'$ of $P$ in $\mathbb{A}^{m+1}(\Z/p^2\Z)$.   
\end{proof}

Consider the generic polynomials
$$F(X,Y)=aX^2+bXY+cY^2\ \text{and}\ G(X,Y)=dX^2+eXY+fY^2.$$
Their resultant is
$$R=a^2f^2-abef-2acdf+ace^2+b^2df-bcde+c^2d^2\in\Z[a,b,c,d,e,f].$$
For each prime $p$, let $\bar{A}_p$, $\bar{B}_p$, and $\bar{\mathcal{S}}_p$ be as in Lemma~\ref{lem:A_p B_p S_p}. Let $\mathcal{S}_p\subseteq \cR^{(2)}$ be the preimage of 
$\bar{\mathcal{S}}_p$ under the reduction map
$\mathbb{P}^5(\Q)\rightarrow\mathbb{P}^5(\Z/p^2\Z)$. Then $\mathcal{S}=\bigcap_p \mathcal{S}_p$
is exactly the set of rational maps of degree $2$ defined over $\Q$ with squarefree resultant. This is also the set $\V_{\Sigma}$ in the previous section when $d=2$ and $\Sigma$ is the empty set.
Let $n$ be a natural number in the range $[1,6]$ and $\mu_n$ be the weak box density in $\mathbb{P}^5(\Q)$. Proposition~\ref{mu admissible proposition} asserts that  
\begin{equation}\label{mu_n(cS_p)=prod_p pi(cS_p)}
    \mu_n(\cS)=\prod_p \pi(\cS_p).
\end{equation}

From \eqref{eq:picSp definition} and Lemma~\ref{lem:A_p B_p S_p}, we have
\begin{equation}\label{eq:S and the A_p B_p}
 \pi(\mathcal{S})=\prod_p\pi(\mathcal{S}_p)=\prod_{p}\frac{p(p-1)|\bar{\mathcal{S}}_p|}{p^{12}(1-p^{-6})}=\prod_p\frac{p^6|\bar{A}_p|+(p^6-p^5)|\bar{B}_p|}{p^{12}(1-p^{-6})}.   
\end{equation}

We also let
\begin{align*}
    \bar{A}_p'&=\{P\in(\Z/p\Z)^6: R(P)=0\}\ \text{and}\\
    \bar{B}_p'&=\left\{P\in(\Z/p\Z)^6: R(P)=0\ \text{and}\ \frac{\partial R}{\partial a}(P)=\ldots=\frac{\partial R}{\partial f}(P)=0\right\}.
\end{align*}
We have
\begin{equation}\label{eq:relation among A_p, A_p',...}
    |\bar{A}_p|=p^{6}-|\bar{A}_p'|\ \text{and}\ |\bar{B}_p|=|\bar{A}_p'|-|\bar{B}_p'|.
\end{equation}
Therefore
\begin{equation}\label{eq:in terms of A' and B'}
    p^6|\bar{A}_p|+(p^6-p^5)|\bar{B}_p|=p^{12}-p^5|\bar{A}_p'|-(p^6-p^5)|\bar{B}_p'|.
\end{equation}

For small primes $p$ (for instance $p<20$), with the help of a home computer, we can easily check whether each $(a,b,c,d,e,f)\in (\Z/p\Z)^6$ belongs to $\bar{A}_p$ or $\bar{B}_p$ and obtain the following:

\begin{center}
\begin{tabular}{|c|c|c|c|}
\hline
$p$ & $|A_p|$ & $|B_p|$ & $\pi(\bar{\mathcal{S}}_p)$\\  \hline
& & & \\
$2$ & $24$& $18$ & $\displaystyle\frac{11}{21}$ \\ 
& & & \\
$3$ & $432$ & $192$ & $\displaystyle\frac{10}{13}$\\ 
& & & \\
$5$ & $12000$ & $2880$ & $\displaystyle\frac{596}{651}$\\ 
& & & \\
$7$ & $98784$ & $16128$ & $\displaystyle\frac{782}{817}$\\ 
& & & \\
$11$ & $1597200$ & $158400$ & $\displaystyle\frac{14510}{14763}$\\ 
& & & \\
$13$ & $4429152$ & $366912$ & $\displaystyle\frac{9460}{9577}$\\ 
& & & \\
$17$ & $22639104$ & $1410048$ & $\displaystyle\frac{11888}{11973}$\\ 
& & & \\
$19$ & $44446320$ & $2462400$ & $\displaystyle\frac{43314}{43561}$\\  
& & & \\ \hline
\end{tabular}
\end{center}

Then we have $\displaystyle\prod_{p<20}\pi(\mathcal{S}_p)=0.33843144838...$ We now aim to give a lower bound for $\pi(\mathcal{S}_p)$ when $p>20$. From \eqref{eq:S and the A_p B_p} and \eqref{eq:in terms of A' and B'}, it suffices to give upper bounds for $|\bar{A}_p'|$
and $|\bar{B}_p'|$.
\begin{proposition}\label{prop:barAp'}
     $|\bar{A}_p'|\leq  2p^5-p^3$.
\end{proposition}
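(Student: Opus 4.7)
The set $\bar{A}_p'$ consists of pairs $(F,G)$ of binary quadratic forms over $\F_p$ whose resultant vanishes. By the classical characterization of the resultant (recalled in the excerpt as equivalent conditions (1)--(3) for good reduction), $R(F,G)=0$ if and only if either $F\equiv 0$, $G\equiv 0$, or $F$ and $G$ share a common zero in $\mathbb{P}^1(\bar\F_p)$. A shared root in $\F_{p^2}\setminus\F_p$ forces both Galois conjugates to be common (since the coefficients lie in $\F_p$), so the last condition is equivalent to $\gcd(F,G)\in\F_p[X,Y]$ having positive degree. I will compute $|\bar{A}_p'|$ exactly by stratifying, and the bound will follow as a corollary.

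\textbf{Stratification.} Partition $\bar{A}_p'$ into four disjoint subsets: (a) $F=0$, giving $p^3$ tuples; (b) $F\neq 0$ and $G=0$, giving $p^3-1$ tuples; (c) both $F,G$ nonzero and proportional (equivalently, $\gcd(F,G)$ has degree $2$), giving $(p^3-1)(p-1)$ tuples; and (d) both $F,G$ nonzero and non-proportional with $\gcd(F,G)$ of degree exactly $1$. In case (d), $\gcd(F,G)$ is a linear form whose projective zero $\ell\in\mathbb{P}^1(\F_p)$ is uniquely determined, so the $p+1$ choices of $\ell$ yield disjoint contributions. For each $\ell$ fix a representative linear form $L_\ell$ and write $F=L_\ell M_F$, $G=L_\ell M_G$; the linear forms $M_F,M_G$ each range over the $p^2-1$ nonzero linear forms, and the non-proportionality of $F$ and $G$ translates to $M_G\not\parallel M_F$, leaving $p^2-p$ valid choices of $M_G$ per $M_F$. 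This yields $(p^2-1)(p^2-p)$ pairs per $\ell$, totaling $(p+1)(p^2-1)(p^2-p)=p(p^2-1)^2$ in case (d).

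\textbf{Final estimate.} Summing the four contributions gives
\[
|\bar{A}_p'|=p^3+(p^3-1)+(p^3-1)(p-1)+p(p^2-1)^2=p^5+p^4-p^3,
\]
after routine algebra, and since $p^4\leq p^5$ we obtain $|\bar{A}_p'|\leq 2p^5-p^3$ as required. The only delicate point is verifying that in case (d) the shared linear form is unambiguous: this is guaranteed because proportional pairs have been separated into case (c), forcing $\deg\gcd(F,G)=1$, while the subcase where $F$ (or $G$) has a double root at $\ell$ poses no issue since then $M_F$ (respectively $M_G$) is simply a nonzero scalar multiple of $L_\ell$ and is still counted correctly. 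This direct stratification count is sharper (and works uniformly in $p$) than what one would obtain by applying a Lang--Weil-type estimate to the irreducible hypersurface $\{R=0\}\subset\mathbb{A}^6$ (cf.~Proposition~\ref{f irred}).
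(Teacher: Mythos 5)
Your proof is correct, and it takes a genuinely different route from the paper. The paper bounds $|\bar{A}_p'|$ by treating $R$ as a quadratic in $a$ and doing a direct case analysis on whether $f\neq 0$, then whether $ce\neq 0$, and so on; this is elementary but only yields the upper bound $2p^5-p^3$. You instead use the structural meaning of the resultant of two binary quadratic forms over $\F_p$: it vanishes exactly when one form is identically zero or the two nonzero forms share a projective root, and a shared root outside $\mathbb{P}^1(\F_p)$ would drag along its Frobenius conjugate and force proportionality. Your stratification (zero form, proportional pair, shared linear factor over $\F_p$) is disjoint and exhaustive, and the arithmetic gives the \emph{exact} value $|\bar{A}_p'|=p^5+p^4-p^3$, which is sharper than what the paper states and which matches the paper's computer-generated table for $p<20$ (e.g.\ for $p=2$: $32+16-8=40=2^6-24$). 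The trade-off is that your argument relies on factorization theory of binary forms over finite fields rather than raw polynomial counting; both would generalize to higher degree, but the paper's variable-by-variable counting is closer in spirit to the estimates it needs elsewhere (Lemma~\ref{2d p to the 2d+1}, Proposition~\ref{Cp to the 2d}). Using your exact value for $|\bar{A}_p'|$ in \eqref{eq:in terms of A' and B'} would actually improve the final constant in Theorem~\ref{d=2 main thm}.
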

\begin{proof}
    Write
    $$R=a^2f^2+a(ce^2-bef-2cdf)+b^2df-bcde+c^2d^2.$$
    Among the $(a,b,c,d,e,f)\in (\Z/p\Z)^6$ such that $R(a,\ldots,f)=0$, we divide into $3$ cases.

    \textbf{Case 1:} $f\neq 0$. There are $p^4(p-1)$ many such tuples $(b,c,d,e,f)$. 
    For each tuple, there are at most $2$ values of $a$ such that $R(a,\ldots,f)=0$. Hence we have at most
    $2p^4(p-1)$ many tuples $(a,\ldots,f)$ in this case.

    \textbf{Case 2:} $f=0$ and $ce\neq 0$. There are $p^2(p-1)^2$ many such tuples $(b,c,d,e,f)$. For each tuple, there is a unique $a$ such that $R(a,\ldots,f)=0$. Hence we have 
    $p^2(p-1)^2$ many tuples $(a,\ldots,f)$ in this case.

    \textbf{Case 3}: $f=0$ and $ce=0$. Then we must have $cd=0$ and $R(a,\ldots,f)=0$ for any $a$. This breaks into 2 smaller cases:
    \begin{itemize}
        \item $f=0$ and $c=0$. There are $p^4$ many tuples $(a,\ldots,f)$ in this case.
        \item $f=0$ and $c\neq 0$, hence $e=0$ and $d=0$. There are $p^2(p-1)$ many tuples
        $(a,\ldots,f)$ in this case.
    \end{itemize}
    Overall, we have:
    $$|\bar{A}_p'|\leq 2p^4(p-1)+p^2(p-1)^2+p^4+p^2(p-1)=2p^5-p^3.$$
\end{proof}

\begin{proposition}\label{prop:barBp'}
    When $p>3$, we have $|\bar{B}_p'|<p^4+3p^3$.
\end{proposition}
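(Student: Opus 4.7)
My plan is to identify $\bar{B}_p'$ with the common vanishing locus in $\F_p^6$ of the three $2\times 2$ minors
\[
M_1 := ae - bd, \quad M_2 := af - cd, \quad M_3 := bf - ce
\]
of the matrix $\begin{pmatrix} a & b & c \\ d & e & f \end{pmatrix}$. A direct computation yields
\begin{align*}
\tfrac{\partial R}{\partial a} &= 2fM_2 - eM_3, & \tfrac{\partial R}{\partial b} &= dM_3 - fM_1, & \tfrac{\partial R}{\partial c} &= eM_1 - 2dM_2, \\
\tfrac{\partial R}{\partial d} &= bM_3 - 2cM_2, & \tfrac{\partial R}{\partial e} &= cM_1 - aM_3, & \tfrac{\partial R}{\partial f} &= 2aM_2 - bM_1,
\end{align*}
together with $R = M_2^2 - M_1 M_3$. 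Consequently $R$ and every $\partial R/\partial x_i$ lie in the ideal $(M_1, M_2, M_3)$, giving the easy containment $V(M_1, M_2, M_3)(\F_p) \subseteq \bar{B}_p'$ for every prime $p$.

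For the reverse containment I would read the six identities above as a homogeneous linear system $A(a,b,c,d,e,f)\cdot (M_1, M_2, M_3)^T = 0$, where $A$ is a $6\times 3$ matrix with linear entries, and argue that over $\F_p$ this forces $M_1 = M_2 = M_3 = 0$. Explicitly, when $f \neq 0$ and $p > 2$, the equations $\partial R/\partial b = 0$ and $\partial R/\partial a = 0$ give $M_1 = dM_3/f$ and $M_2 = eM_3/(2f)$; substituting into $\partial R/\partial d = 0$ produces $(bf - ce)M_3/f = M_3^2/f = 0$, so $M_3 = 0$ and hence $M_1 = M_2 = 0$. The subcase $f = 0$ is handled by a short parallel analysis splitting on whether $ce = 0$. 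Alternatively, following the strategy announced in Section~\ref{s 5}, one can run \emph{Macaulay2} on the ideal $I = (R, \partial R/\partial a, \ldots, \partial R/\partial f) \subseteq \Z[a,b,c,d,e,f]$ to read off its primary decomposition; this identifies $\sqrt{I} = (M_1, M_2, M_3)$ and flags the small primes $2$ and $3$ as the only characteristics where the reduction could behave anomalously, which is the source of the hypothesis $p > 3$.

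Once $\bar{B}_p' = V(M_1, M_2, M_3)(\F_p)$ is established, the count is immediate: $V(M_1, M_2, M_3)(\F_p)$ is the rank-at-most-one locus of $2\times 3$ matrices over $\F_p$, i.e.\ the ordered pairs of linearly dependent vectors $(a,b,c), (d,e,f) \in \F_p^3$. When $(d,e,f) = 0$ any $(a,b,c)$ is allowed, contributing $p^3$ points; otherwise $(a,b,c) = \lambda (d,e,f)$ for some $\lambda \in \F_p$, contributing $p(p^3 - 1)$ further points. Therefore
\[
|\bar{B}_p'| = p^3 + p(p^3 - 1) = p^4 + p^3 - p,
\]
which is strictly less than $p^4 + 3p^3$ with substantial slack. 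The main obstacle I anticipate is the reverse containment: the $f = 0$ subcase is somewhat delicate by hand, and the cleanest route (which matches the paper's methodology) is to invoke \emph{Macaulay2} and verify that the primary decomposition of $I$ descends correctly from $\Q$ to $\F_p$ for $p > 3$.
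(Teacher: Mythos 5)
Your proposal is correct and, in fact, sharper than the paper's own argument. The paper runs \emph{Macaulay2} to obtain the primary decomposition $I\Q[a,\ldots,f] = I_1 \cap I_2$ with $I_1 = (ce-bf, cd-af, bd-ae)$ and $I_2 = (e^2-4df, 2cd-be+2af, b^2-4ac)$, bounds the $\F_p$-points of $V(I_1)$ exactly (obtaining $p^4+p^3-p$), and then separately bounds $|V(I_2)(\F_p) \setminus V(I_1)(\F_p)|$ by $2(p-1)^3$ to conclude $|\bar{B}_p'| < p^4 + 3p^3$. What the paper does not observe, and what your identities make transparent, is that $I_2$ is an \emph{embedded} primary component: $V(I_2) \subseteq V(I_1)$ over any field of characteristic $\neq 2$, so the second contribution is actually vacuous. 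Your six identities expressing $\partial R/\partial x_i$ as explicit $\F_p$-linear combinations of the minors $M_1, M_2, M_3$, together with $R = M_2^2 - M_1M_3$, give the forward containment $V(M_1,M_2,M_3)(\F_p) \subseteq \bar{B}_p'$ immediately; and your elimination argument in the $f \neq 0$ case (using $\partial R/\partial b$, $\partial R/\partial a$, and $\partial R/\partial d$ to force $M_3^2/f = 0$) together with the $f=0$ split gives the reverse containment without any reference to the decomposition. This yields the \emph{exact} count $|\bar{B}_p'| = p^4 + p^3 - p$ (which indeed matches the paper's table at $p = 3, 5, 7$), versus the paper's weaker upper bound. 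The $f = 0$ branch is not delicate once you split on $c = 0$ versus $c \neq 0$: in the first subcase $M_2 = M_3 = 0$ automatically and $\partial R/\partial c = \partial R/\partial f = 0$ force $eM_1 = bM_1 = 0$, giving $M_1 = 0$ either way; in the second, $\partial R/\partial a = ce^2 = 0$ forces $e = 0$ and then $\partial R/\partial c = 2cd^2 = 0$ forces $d = 0$. Finally, your argument requires only $p > 2$ (for the division by $2$), so the stated hypothesis $p > 3$ is actually more restrictive than you need; the paper's $p > 3$ comes from the factor $12$ relating $I$ and $I_1 \cap I_2$ over $\Z$, an issue your direct verification bypasses entirely.
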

\begin{proof}
    Let $I$ be the ideal of $\Z[a,\ldots,f]$ generated by $R$ and its partial derivatives. By using Macaulay2\footnote{Available at \url{https://macaulay2.com/}}, we can compute the following primary decomposition in $\Q[a,\ldots,f]$:
    $$I\Q[a,\ldots,f]=(ce-bf,cd-af,bd-ae)\cap (e^2-4df,2cd-be+2af,b^2-4ac).$$

    Let $I_1=(ce-bf,cd-af,bd-ae)$ and $I_2=(e^2-4df,2cd-be+2af,b^2-4ac)$ as ideals of $\Z[a,\ldots,f]$. Using Macaulay2 again, we can check that:
    $$I_1\cap I_2\subseteq I\ \text{and}\ 12I\subseteq I_1\cap I_2.$$
    This means that inside the ring $Z[\frac{1}{12},a,\ldots,f]$, we have $I=I_1\cap I_2$. Since $p>3$, we have $\bar{B}_p'=\bar{C}_p\cup\bar{D}_p$ where
    $$\bar{C}_p:=\{(a,b,c,d,e,f)\in (\Z/p\Z)^6:\ ce-bf=cd-af=bd-ae=0\}\ \text{and}$$
    $$\bar{D}_p:=\{(a,b,c,d,e,f)\in (\Z/p\Z)^6:\ e^2-4df=2dc-be+2af=b^2-4ac=0\}.$$

    It is easy to determine $|\bar{C}_p|$: we simply determine the number of ordered pairs of linearly dependent vectors $((a,b,c),(d,e,f))$ in $(\Z/p\Z)^3$. When $(a,b,c)$ is zero, $(d,e,f)$ can be any vector. Otherwise, for any given non-zero $(a,b,c)$, there are exactly $p$ many choices for $(d,e,f)$. This gives
    \begin{equation}\label{eq:barCp}
    |\bar{C}_p|= p^3+p(p^3-1).
    \end{equation}
    We now give an upper bound for $|\bar{D}_p\setminus\bar{C}_p|$. 

    \textbf{Claim:} for $(a,b,c,d,e,f)\in \bar{D}_p\setminus\bar{C}_p$, we have $abcdef\neq 0$. We present the proof that $a\neq 0$, then the  proof for each of the statements $c\neq 0$, $d\neq 0$, and $f\neq 0$ is
    similar. After proving $acdf\neq 0$, since $e^2=4df$ and $b^2=4ac$ we have $be\neq 0$ as well.
    Suppose $a=0$, then from the definition of $\bar{D}_p$, we have:
    $$b=0,\ dc=0,\ e^2=4df$$
    Since $(a,b)=(0,0)$ we have $c\neq 0$, otherwise $(a,b,c,d,e,f)\in\bar{C}_p$. Hence $d=0$ and $e=0$. But then we still have $(a,b,c,d,e,f)\in\bar{C}_p$, contradiction.

    Now there are $(p-1)^3$ many tuples $(a,b,e)$ with $abe\neq 0$. We fix any such $(a,b,e)$. Then
    $c$ is determined uniquely from $b^2=4ac$. We now have the system of equations in $(d,f)$:
    $$df=e^2/4\ \text{and}\ 2dc-be+2af=0.$$
    By substituting $f=(be-2dc)/(2a)$ into the first equation, we get a quadratic equation in $d$. This explains why there are at most $2$ solutions $(d,f)$ of the above system.  Therefore
    \begin{equation}\label{eq:barDp}
    |\bar{D}_p\setminus\bar{C}_p|\leq 2(p-1)^3.
    \end{equation}

    From \eqref{eq:barCp} and \eqref{eq:barDp}, we have:
    $$|\bar{B}_p'|=|\bar{C}_p|+|\bar{D}_p\setminus\bar{C}_p|<p^4+3p^3.$$
    \end{proof}

    \begin{theorem}\label{d=2 main thm}
       We have 
        $$\mu_n(\cS)>\left(\prod_{p\geq 23}\frac{p^{12}-3p^{10}-2p^9+4p^8}{p^{12}(1-p^{-6})}\right)\cdot 0.33843144838...>0.327$$
    Therefore, with respect to the weak box densities, more than $32.7$\% rational maps of degree $2$ with rational coefficients have squarefree, hence minimal, resultant.
    \end{theorem}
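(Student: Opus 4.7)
The plan is to bolt together the ingredients already assembled. By Proposition~\ref{mu admissible proposition} (for $d=2$ and $\Sigma=\emptyset$) we have the product formula $\mu_n(\cS)=\prod_p\pi(\cS_p)$ recorded in \eqref{mu_n(cS_p)=prod_p pi(cS_p)}. Splitting this product at the prime $20$, the first factor is the finite product $\prod_{p<20}\pi(\cS_p)=0.33843144838\ldots$, whose value comes from the explicit table of $|\bar A_p|$ and $|\bar B_p|$ together with Lemma~\ref{lem:A_p B_p S_p} and the identity \eqref{eq:S and the A_p B_p}. The remaining task is to bound the tail $\prod_{p\geq 23}\pi(\cS_p)$ from below.

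For each $p\geq 23$, I would combine \eqref{eq:S and the A_p B_p} and \eqref{eq:in terms of A' and B'} to rewrite
\[
\pi(\cS_p)=\frac{p^{12}-p^{5}|\bar A_p'|-(p^{6}-p^{5})|\bar B_p'|}{p^{12}(1-p^{-6})},
\]
then substitute the upper bounds $|\bar A_p'|\leq 2p^{5}-p^{3}$ from Proposition~\ref{prop:barAp'} and $|\bar B_p'|<p^{4}+3p^{3}$ from Proposition~\ref{prop:barBp'}. A short algebraic simplification yields
\[
p^{12}-p^{5}(2p^{5}-p^{3})-(p^{6}-p^{5})(p^{4}+3p^{3})=p^{12}-3p^{10}-2p^{9}+4p^{8},
\]
which reproduces the numerator appearing in the stated lower bound. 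Multiplying the finite factor against this infinite product then gives the first inequality in the theorem.

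All that remains is the numerical verification $\prod_{p\geq 23}\frac{p^{12}-3p^{10}-2p^9+4p^8}{p^{12}(1-p^{-6})}>\frac{0.327}{0.33843144838\ldots}$. I would do this by computing the product over primes $23\leq p\leq N$ exactly (for some concrete $N$, say $N=200$), and then bounding the tail $p>N$ using
\[
\log\frac{1-3p^{-2}-2p^{-3}+4p^{-4}}{1-p^{-6}}\geq -\frac{3}{p^{2}}-\frac{C}{p^{3}}
\]
for an explicit constant $C$ and $p$ large enough, and then the crude estimates $\sum_{p>N}p^{-2}<\sum_{n>N}n^{-2}<1/N$ and $\sum_{p>N}p^{-3}<1/(2N^{2})$. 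This gives a rigorous lower bound for the tail product which, chained with the explicit finite computation, produces a value comfortably above $0.966$.

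The main obstacle is purely computational bookkeeping: one has to pick $N$ large enough so that the crude tail estimate does not wipe out the margin between the true product ($\approx 0.969$) and the target ratio ($\approx 0.9662$), yet small enough that the partial product over $23\leq p\leq N$ is easily evaluated on a computer. Since the slack is on the order of $3\times 10^{-3}$ and $\sum_{p>N}3/p^{2}$ drops below this once $N\gtrsim 1000$, no subtle estimate is needed; the proof reduces to a controlled finite calculation. Once this numerical bound is in hand, the final sentence of the theorem (``more than $32.7\%$ of rational maps of degree $2$ have squarefree, hence minimal, resultant'') follows because squarefree resultant implies $d$-power free resultant for $d=2$, so Remark~\ref{rem:V and W have globally minimal maps} applies to give global minimality.
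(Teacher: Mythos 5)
Your proposal is correct and takes essentially the same route as the paper: combine the product formula from Proposition~\ref{mu admissible proposition} with the explicit local computations for $p<20$, then bound each factor for $p\geq 23$ via \eqref{eq:S and the A_p B_p}, \eqref{eq:in terms of A' and B'}, Proposition~\ref{prop:barAp'}, and Proposition~\ref{prop:barBp'}, and verify the algebraic identity $p^{12}-p^{5}(2p^{5}-p^{3})-(p^{6}-p^{5})(p^{4}+3p^{3})=p^{12}-3p^{10}-2p^{9}+4p^{8}$. The only difference is that the paper leaves the final numerical check implicit while you spell out a concrete scheme (finite product up to $N$ plus a crude tail bound $\sum_{p>N}3/p^{2}$), which is a welcome clarification but not a different method.
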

\begin{proof}
    Recall that from \eqref{mu_n(cS_p)=prod_p pi(cS_p)}, we have that 
    \[\mu_n(\cS_p)=\prod_p \pi(\cS_p).\]
     From \eqref{eq:S and the A_p B_p}, \eqref{eq:in terms of A' and B'}, Proposition~\ref{prop:barAp'}, and Proposition~\ref{prop:barBp'}, the result follows.
\end{proof}
\bibliographystyle{alpha}
\bibliography{references}
\end{document}